\documentclass[11pt]{article}
\usepackage[margin=1.1in]{geometry}
\usepackage{amsmath,amssymb,amsthm}
\usepackage[T1]{fontenc}
\usepackage{lmodern}
\usepackage{booktabs,array}
\usepackage{microtype}
\newtheorem{theorem}{Theorem}
\newtheorem{lemma}{Lemma}
\theoremstyle{remark}
\newtheorem{remark}{Remark}
\newtheorem{proposition}[theorem]{Proposition}
\newtheorem{corollary}[theorem]{Corollary}
\theoremstyle{definition}
\newtheorem{definition}[theorem]{Definition}
\newcommand{\Z}{\mathbb{Z}}
\newcommand{\N}{\mathbb{N}}
\newcommand{\Fp}{\mathbb{F}_p}
\newcommand{\Fq}{\mathbb{F}_q}
\newcommand{\md}{\ \mathrm{mod}\ }
\newcommand{\tsub}{\mathbin{\dot{-}}}
\newcommand{\HW}{\operatorname{HW}}
\newcommand{\F}{\mathbb{F}}
\newcommand{\Zp}{\Z_p}
\newcommand{\vp}{v_p}
\newcommand{\legendre}[1]{\genfrac{(}{)}{0.4pt}{1}{#1}{p}}

\DeclareMathOperator{\GF}{GF}
\title{Short arithmetic terms for the cardinality of elliptic curves over rings of remainder classes and over finite fields}
\author{Bogdan Dumitru\footnote{ Faculty of Mathematics and Computer Science, University of Bucharest, Academiei 14, Bucharest (RO-010014), Romania.}, Mihai Prunescu \footnote{Research Center for Logic, Optimization and Security (LOS), Faculty of Mathematics and Computer Science, University of Bucharest, Academiei 14, Bucharest (RO-010014), Romania.} \footnote{ Simion Stoilow Institute of Mathematics of the Romanian Academy, Research unit 5, P. O. Box 1-764, Bucharest (RO-014700), Romania. E-mail: {\tt mihai.prunescu@imar.ro}}
}

\begin{document}
\date{}
\maketitle

\begin{abstract}\noindent

Arithmetic terms are fixed finite compositions of addition, truncated
subtraction, multiplication, integer division and exponentiation on natural
numbers. We construct such terms for the number of affine solutions of
$y^2=x^3+Ax+B$. For arbitrary moduli $n\ge1$, a specialization of
Prunescu's general construction reduces the number of monomial
contributions from $49$ to $15$ and the size of the packed integer from
approximately $2n^{11}$ to approximately $2n^5$ binary digits. For prime
moduli $p\ge17$, the Hasse invariant and the trace of Frobenius give a
term of about thirty operations. For curves defined over $\Fp$, a further
term counts the points over $\mathbb F_{p^k}$ with $k$ as a variable.

For primes $p\ge5$, we also obtain arithmetic terms for the counts over
$\Z/p^k\Z$ with $k$ variable, covering good reduction, nodes and cusps,
including nonminimal equations and zero discriminant.

\noindent{\bf 2020 Mathematics Subject Classification:} 11G05. \\[2mm]
{\bf Keywords:} Kalmar elementary function, arithmetic term,
Weierstra{\ss } normal form, trace of Frobenius, Legendre symbol, finite
fields of characteristic $p$, Hasse invariant, point counts modulo
prime powers.
\end{abstract}

\noindent

\section{Introduction}\label{sec:introduction}

We study the number $N(A,B,n)$ of pairs $(x,y)\in(\Z/n\Z)^2$ satisfying
$y^2=x^3+Ax+B$. The coefficients $A,B$ are natural numbers unless a wider
domain is stated. Any integer coefficients can be replaced by their
least nonnegative residues modulo the modulus being considered.

An \emph{arithmetic term} is a fixed finite composition of addition,
truncated subtraction $x\tsub y=\max(x-y,0)$, multiplication, integer
division $\lfloor x/y\rfloor$ and exponentiation on natural numbers.
Division is used with positive denominators. The remainder
$x\md y=x\tsub y\lfloor x/y\rfloor$ is therefore an arithmetic term.
Throughout, we use the convention $0^0=1$.

Mazzanti proved that every Kalmar elementary function is expressible by
an arithmetic term \cite{Mazzanti2002plainbases}. Prunescu applied this
method to count the solutions of polynomial systems over $\Z/n\Z$
\cite{Pru26}. For the Weierstra\ss{} equation $y^2=x^3+Ax+B$, the resulting
term has the form

$$N(A,B,n)=\frac{\HW(M(A,B,n))}{n+90}-(n+1)^{10}.$$

Here $\HW$ counts the ones in the binary representation of an integer.
The construction has $49$ monomial contributions, expressed through the
generalized geometric progressions
$G_r(q,t)=\sum_{j=0}^t j^r q^j$, with indices up to $12$.
Further arithmetic-term representations can be found in
\cite{istrateprunescushuniauniversal,
prunescu2024numbertheoreticfunctions,prunescushuniaprimes}.

For this curve family, the five-variable construction can be replaced
by a three-variable construction in which each variable has its own
range. Theorem~\ref{thm:composite} gives the resulting term for all
$n\ge1$. It has $15$ monomial contributions and uses $G_0,\ldots,G_6$.
The packed integer has approximately $2n^5$ binary digits, compared with
approximately $2n^{11}$ in the general construction.

For a nonsingular cubic and a prime modulus $p\ge17$, a coefficient of
$(x^3+\bar A x+\bar B)^{(p-1)/2}$ determines the trace of Frobenius modulo
$p$, where $\bar A=A\md p$ and $\bar B=B\md p$. We extract this
coefficient as a digit of an integer and recover the trace using Hasse's
bound. Put

$$r \;=\; \left (\left \lfloor  \frac{ \left (2^{3p^2} + \bar A\, 2^{p^2} + \bar B\right )^{\lfloor p/2\rfloor}}{
 2^{p^2(p-1)}} \right\rfloor \md 2^{p^2}\right ) \md p .$$

Then Theorem~\ref{thm:ztopz} gives

$$N(A,B,p) \;=\; (p \tsub r) \;+\; p\cdot\Bigl(1 \tsub \bigl((4p+1)\tsub r^2\bigr)\Bigr).$$

The same formula covers singular cubics, for which $p-N(A,B,p)$ has
absolute value at most $1$. For nonsingular cubics defined over $\Fp$,
the trace also determines the counts over $\mathbb F_{p^k}$ through a
linear recurrence. Theorem~\ref{thm:term} expresses these counts by one
arithmetic term with $k$ variable; Theorem~\ref{thm:ext}(ii) gives the
corresponding formula for singular cubics.

For the rings $\Z/p^k\Z$, the count depends on the reduction of the
given integral equation. When $p\ge5$, Hensel's lemma gives
$N(A,B,p^k)=p^{k-1}N(A,B,p)$ in the case of good reduction. For a node,
the count is determined by $k$, $v_p(4A^3+27B^2)$ and one Legendre symbol.
For a cusp, successive substitutions give the finite table of
Theorem~\ref{thm:cusp}. Theorem~\ref{thm:compression} sums all repeated
scaling steps, and Proposition~\ref{prop:R} supplies the remaining
cubic root count. These local formulas include nonminimal equations and
zero discriminant. We restrict the local analysis to $p\ge5$; the
all-moduli term also applies to powers of $2$ and $3$.

The prime-field term uses about thirty arithmetic operations. Each
operation is counted as one step, regardless of operand size. The terms
for extension fields and prime-power rings use further operations,
whose number is independent of $k$. Sizes of the intermediate integers and numerical
evaluations are discussed in Remark~\ref{rem:width} and
Section~\ref{sec:numerics}.

\section{An arithmetic term for all moduli}\label{sec:composite}

For arbitrary $n\ge1$, we count the solutions of the congruence by
encoding a bounded polynomial in one integer. The construction uses
the binary weight $\HW$ and the generalized geometric progressions
$G_r$. We recall the arithmetic terms for $\HW$ and its ingredients
\cite{Pru26,prunescu2024numbertheoreticfunctions}:

\begin{align*}
\gcd(a,b) &= \left ( \left \lfloor{\frac{5^{ab(ab + a+b)}}{(5^{a^2b}-1)(5^{b^2a}-1)}} \right \rfloor \bmod 5^{ab} \right ) \dot{-} 1, \\
\nu_2(n) &= \left \lfloor{\frac{\gcd(n, 2^n)^{n+1} \bmod (2^{n+1}-1)^2}{2^{n+1}-1}} \right \rfloor , \\
\binom{n}{k} &= \left \lfloor{\frac{(2^n+1)^n}{2^{nk}}} \right \rfloor \bmod 2^n , \\
\HW(n) &= \nu_2 \left( \binom{2n}{n} \right).
\end{align*} 

Also, recall the generalized geometric progressions for $q>1$ and $t\ge0$
\[
G_r(q,t)=\sum_{j=0}^{t}j^r q^j .
\]
Their closed forms are obtained from
\[
G_0(q,t)=\frac{q^{t+1}-1}{q-1},
\]
\begin{align*}
 G_{r + 1} ( q , t ) = \frac{\partial}{\partial q} G_r ( q , t + 1 ) - \sum_{j=0}^r \binom{r+1}{j} G_j ( q , t ) .
\end{align*}
with $r$ fixed. Repeated differentiation gives a quotient with denominator
$(q-1)^{r+1}$ and a fixed integer-polynomial numerator. Splitting that
numerator into its positive and negative monomials and replacing their
difference by truncated subtraction gives an arithmetic term. Only
$G_0,\dots,G_6$ occur below; here $q-1$ may of course be written
$q\tsub1$. We note for example that:
\[
G_1(q,t)=\frac{tq^{t+2}-(t+1)q^{t+1}+q}{(q-1)^2}.
\]

\begin{theorem}\label{thm:composite}
Let $n\ge 1$ and $A,B\in\N$. Put $\bar A=A\md n$, $\bar B=B\md n$, $m=n^3\tsub\bar B$,
\[
w=n+12,\qquad T=n^2+n+1,\qquad q=2^{2w},
\]
and for $i,j,k\ge 0$ write
\[
S(i,j,k) \;=\; G_i(q,\,n-1)\; G_j(q^{\,n},\,n-1)\; G_k(q^{\,n^2},\,T-1).
\]
Define the two natural numbers
\begin{align*}
S_E^+ \;=\;& \phantom{{}+{}} S(0,4,0) \;+\; m^2\,S(0,0,0) \;+\; S(6,0,0)
        \;+\; {\bar A}^2\,S(2,0,0) \;+\; n^2\,S(0,0,2)\\
        &{}+ 2m\,S(0,2,0) \;+\; 2{\bar A}\,S(4,0,0) \;+\; 2n\,S(3,0,1) \;+\; 2{\bar A}n\,S(1,0,1)\\
S_E^- \;=\;& \phantom{{}+{}} 2\,S(3,2,0) \;+\; 2{\bar A}\,S(1,2,0) \;+\; 2n\,S(0,2,1)\\
         &{}+ 2m\,S(3,0,0) \;+\; 2{\bar A}m\,S(1,0,0) \;+\; 2mn\,S(0,0,1),
\end{align*}
and
\[
M \;=\; \bigl(2^w\tsub1\bigr)
\Bigl(\bigl(2^w+1\bigr)S(0,0,0)+S_E^-\;\tsub\;S_E^+\Bigr).
\]
Then the number $N$ of points $(x,y)\in(\Z/n\Z)^2$ with $y^2=x^3+Ax+B$ equals
\[
N \;=\; \frac{\HW(M)}{w} \;\tsub\; n^2T .
\]
\end{theorem}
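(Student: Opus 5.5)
The plan is to read $M$ as an integer written in base $q=2^{2w}$, with one digit for each triple $(x,y,z)\in[0,n-1]\times[0,n-1]\times[0,T-1]$ placed at position $x+ny+n^2z$, and then to count binary ones digit by digit.

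\textbf{Step 1: the digits of $M$.} By definition of $G_r$, $S(i,j,k)=\sum_{x,y,z}x^iy^jz^kq^{x+ny+n^2z}$. Since $x,y<n$, the map $(x,y,z)\mapsto x+ny+n^2z$ is injective, so each triple has its own position. I will expand
\[
E(x,y,z)=\bigl(y^2-x^3-\bar Ax+m-nz\bigr)^2
\]
and check term by term that $E=P^+-P^-$, where $S_E^\pm=\sum P^\pm(x,y,z)\,q^{x+ny+n^2z}$. For instance, the cross term $2x^3\cdot nz$ gives $2n\,S(3,0,1)$ in $S_E^+$, and $-2y^2\cdot nz$ gives $2n\,S(0,2,1)$ in $S_E^-$. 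Granting the digit bound of Step 3, the truncated subtraction is then an ordinary subtraction. So $M=\sum_{x,y,z}(2^w-1)\,c(x,y,z)\,q^{x+ny+n^2z}$ with $c=2^w+1-E$.

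\textbf{Step 2: each triple contributes $w$ or $2w$ ones.} If $E=0$, the digit is $(2^w-1)(2^w+1)=2^{2w}-1$, which has $2w$ ones. If $1\le c\le 2^w$, write $(2^w-1)c=(c-1)2^w+\bigl((2^w-1)-(c-1)\bigr)$. The low $w$ bits are the bitwise complement of $c-1$, so the total weight is $\HW(c-1)+w-\HW(c-1)=w$. Every digit is below $q$, so there are no carries between blocks and $\HW(M)=\sum_{\text{triples}}\HW(\text{digit})$. Hence $\HW(M)=w\bigl(n^2T+Z\bigr)$, where $Z$ is the number of triples with $E=0$.

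\textbf{Step 3: $Z=N$ and the digit bound.} Since $m=n^3-\bar B$, we have $E=0$ exactly when $y^2-x^3-\bar Ax-\bar B=n(z-n^2)$. For $0\le x,y\le n-1$, the left side lies strictly between $-n^3$ and $n^2$. As $z$ runs over $[0,n^2+n]$, the quantity $n(z-n^2)$ runs over the multiples of $n$ in $[-n^3,n^2]$, and every multiple of $n$ in the open interval $(-n^3,n^2)$ is attained. Therefore each $(x,y)$ with $y^2\equiv x^3+Ax+B\pmod n$ has exactly one such $z$, and no other pair has any. This gives $Z=N$ and the stated formula, after exact division by $w$ and subtracting $n^2T$.

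The main obstacle is verifying $0\le E\le 2^w$, that is $c\ge1$. This is needed because a digit with $c=0$ would contribute $0$ ones, and any larger $E$ would break the subtraction. I will bound
\[
\bigl|y^2-x^3-\bar Ax+m-nz\bigr|\le n^3+n^2
\]
crudely, so that $E\le (n^3+n^2)^2$. For large $n$ this is clearly below $2^{n+12}$. For small $n$, roughly $n\le 12$, I will tighten the estimate by using the actual ranges: the positive part is at most $(n-1)^2+n^3$, and the negative part is at most $(n-1)^3+(n-1)^2+n(n^2+n)$. If needed, I will also check a finite table of values, since $n\approx10$ is where $2^{n+12}$ and $n^6$ are closest.
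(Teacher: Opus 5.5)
Your proposal is correct and follows essentially the same route as the paper. You expand $E=P^2$ into the $15$ products $S(i,j,k)$, read $M$ in base $q$ with cell digits $(2^w-1)(2^w+1-E)$ of binary weight $2w$ or $w$, and match zeros with solutions through a unique $z$; your shifted form $y^2-g(x)=n(z-n^2)$ is the paper's $D=nz$ with $D\in[1,n^3+n^2]$. No small-$n$ repair is needed, because the crude bound $(n^3+n^2)^2\le 2^{n+12}$ holds for every $n\ge1$, with the smallest margin at $n=8$ ($331776$ versus $2^{20}$); your fallback that bounds the positive and negative parts separately is actually looser, since it discards the cancellation between $m$ and $nz$, though it happens to suffice as well.
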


The proof consists of three lemmas. Fix $n$, $\bar A$, $\bar B$, $m$, $w$, $T$ and
$q$ as in the theorem. Set $g(x)=x^3+\bar Ax+\bar B$ and
\[
P(x,y,z) \;=\; y^2+m-x^3-\bar Ax-nz, \qquad E \;=\; P^2 ,
\]
on the box $\mathcal B=[0,n-1]\times[0,n-1]\times[0,T-1]$. Lemma
\ref{lem:box} shows that $N$ equals the number of zeros of $E$ in
$\mathcal B$, and that $E$ never exceeds $2^w$ on $\mathcal B$. Lemma
\ref{lem:delta} attaches to every value of $E$ a number whose binary weight
distinguishes the zero cells from the nonzero cells. Lemma \ref{lem:pack}
shows that $M$ collects these numbers over all cells, so that $\HW(M)$
counts the zeros.

\begin{lemma}[zeros of $E$ in the box]\label{lem:box}
The number of triples $(x,y,z)\in\mathcal B$ with $E(x,y,z)=0$ equals $N$.
Moreover $0\le E(x,y,z)\le 2^w$ for every $(x,y,z)\in\mathcal B$.
\end{lemma}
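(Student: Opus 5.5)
The plan is to prove the two assertions separately: a bijection for the zero count, and an explicit size estimate on $P$ for the bound. Throughout, $0\le\bar A,\bar B\le n-1$, so $m=n^3-\bar B$ is an honest difference and $m\ge n^3-n+1>0$. For $(x,y)\in[0,n-1]^2$ I put
\[
D(x,y)=y^2+m-x^3-\bar Ax=y^2-g(x)+n^3,
\]
so that $P(x,y,z)=D(x,y)-nz$ and $E=0$ exactly when $nz=D(x,y)$.

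\textbf{Zero count.} The first step is the two-sided estimate $0<D(x,y)<nT$ on the square.
\begin{itemize}
\item For the lower bound, use $x^3\le(n-1)^3$, $\bar Ax\le(n-1)^2$ and $\bar B\le n-1$. This gives $D\ge n^3-(n-1)\bigl((n-1)^2+(n-1)+1\bigr)=2n^2-2n+1>0$.
\item For the upper bound, $D\le (n-1)^2+n^3<n^3+n^2+n=nT$.
\end{itemize}
Now a zero of $E$ in $\mathcal B$ forces $n\mid D(x,y)$, which is the congruence $y^2\equiv g(x)\equiv x^3+Ax+B \pmod n$, and then $z=D(x,y)/n$. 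Conversely, if $(x,y)$ solves the congruence, then $z=D/n$ is an integer with $0<z<T$, so it lies in $[0,T-1]$. Hence $(x,y,z)\mapsto(x,y)$ is a bijection from the zeros of $E$ in $\mathcal B$ onto the solutions in $(\Z/n\Z)^2$, since residues are represented uniquely by $[0,n-1]$. So the number of zeros is $N$.

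\textbf{Size bound.} On $\mathcal B$ we have $0\le nz\le n(T-1)$ and $0<D<nT$, so $-nT<P<nT$. Therefore $0\le E<(nT)^2=(n^3+n^2+n)^2$, and it suffices to show
\[
(n^3+n^2+n)^2\le 2^{n+12}\qquad\text{for all } n\ge1.
\]
This inequality is the only delicate point, because the crude estimate $nT\le3n^3$ is not good enough near $n\approx 9$: there $9n^6$ exceeds $2^{n+12}$. So I will use the exact value $nT=n^3+n^2+n$, which turns the problem into a finite check plus an induction.
\begin{itemize}
\item For $1\le n\le 9$, check directly. For example, $n=8$ gives $584^2=341056\le2^{20}$, and $n=9$ gives $819^2=670761\le 2^{21}$.
\item For $n\ge9$, argue by induction. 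The ratio $\bigl((n+1)T(n+1)/(nT(n))\bigr)^2$ is at most $\bigl((n+1)/n\bigr)^6\le(10/9)^6<2$, because each of the three terms of $n^3+n^2+n$ grows by a factor at most $\bigl((n+1)/n\bigr)^3$.
\end{itemize}
Passing from $n$ to $n+1$ at most doubles the left side and exactly doubles $2^{n+12}$, which completes the induction and gives $0\le E\le2^w$ on $\mathcal B$.
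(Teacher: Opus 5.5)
Your proof is correct and follows the paper's argument: the same quantity $D=y^2+m-x^3-\bar Ax$ with lower bound $2n^2-2n+1$, and the same bijection $(x,y,z)\mapsto(x,y)$ with $z=D/n$. For the size bound you use the slightly weaker $|P|<nT$ instead of the paper's $|P|\le n^3+n^2$, but you then prove $(n^3+n^2+n)^2\le 2^{n+12}$ by a finite check for $n\le 9$ and the induction step $(10/9)^6<2$, whereas the paper only asserts its inequality for all $n\ge1$ and notes where the margin is smallest.
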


\begin{proof}
Since $0\le\bar B<n$, the truncated subtraction defining $m$ is exact and
$m=n^3-\bar B$. For $x,y\in[0,n-1]$ put $D=y^2+m-x^3-\bar Ax$. From $0\le x^3+\bar Ax\le n^3-2n^2+n$
and $n^3-n+1\le m\le n^3$ we obtain $1\le D\le n^3+n^2$. Since $m\equiv
-\bar B\pmod n$, we have $D\equiv y^2-g(x)\pmod n$. Hence the congruence
$y^2\equiv g(x)\pmod n$ holds if and only if $n$ divides $D$. Because $1\le
D\le n^3+n^2$, this happens if and only if $D=nz$ for an integer $z$ with
$1\le z\le n^2+n$, and this $z$ is unique and lies in $[0,T-1]$. Each of the
$N$ solutions $(x,y)$ of the congruence therefore yields exactly one zero of
$P$, and hence of $E$, in $\mathcal B$, and every zero arises in this way.
For the bound, $D\in[1,\,n^3+n^2]$ and $nz\in[0,\,n(T-1)]=[0,\,n^3+n^2]$
imply $|P|\le n^3+n^2$, so $E\le (n^3+n^2)^2$. The inequality
$(n^3+n^2)^2\le 2^{n+12}$ holds for every $n\ge1$. The smallest relative margin occurs
at $n=8$, where $(n^3+n^2)^2=331776$ and $2^{20}=1048576$.
\end{proof}

\begin{lemma}[binary weight of a cell value]\label{lem:delta}
For $0\le E\le 2^w$ let $\delta(E)=(2^w-1)(2^w-E+1)$. Then
$0\le\delta(E)\le 2^{2w}-1$, and
\[
\HW(\delta(E)) \;=\;
\begin{cases}
2w, & E=0,\\
w, & 1\le E\le 2^w.
\end{cases}
\]
\end{lemma}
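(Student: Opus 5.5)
The plan is a direct computation of the binary expansion of $\delta(E)=(2^w-1)(2^w-E+1)$, split into the case $E=0$ and the case $1\le E\le 2^w$.

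First, the range. Put $c=2^w-E+1$. Since $0\le E\le 2^w$, we have $1\le c\le 2^w+1$. Hence $\delta(E)\ge 2^w-1\ge 0$. For the upper bound, $(2^w-1)(2^w+1)=2^{2w}-1$, so $\delta(E)\le 2^{2w}-1$.

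Case $E=0$. Here $c=2^w+1$ and $\delta(0)=2^{2w}-1$, whose binary representation consists of $2w$ ones. So $\HW(\delta(0))=2w$.

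Case $1\le E\le 2^w$. Here $1\le c\le 2^w$. Write
\[
\delta(E)=c\,2^w-c=(c-1)2^w+(2^w-c).
\]
Both pieces lie in $[0,2^w-1]$, so this is the base-$2^w$ decomposition: the high $w$-bit block holds $c-1$ and the low block holds $2^w-c$. Their sum is $2^w-1$, which is the all-ones $w$-bit word. Adding two $w$-bit words that sum to $2^w-1$ produces no carries, so each bit position $0,\dots,w-1$ is set in exactly one of the two. Therefore
\[
\HW(c-1)+\HW(2^w-c)=\HW(2^w-1)=w .
\]
Since the blocks occupy disjoint bit ranges, $\HW(\delta(E))=\HW(c-1)+\HW(2^w-c)=w$.

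The only step needing care is this complementarity of the two blocks: $2^w-c=(2^w-1)-(c-1)$ is the bitwise complement of $c-1$ within $w$ bits. The endpoint cases fit the same argument. For $c=2^w$ (that is, $E=1$) the low block is $0$ and the high block is $2^w-1$. For $c=1$ (that is, $E=2^w$) the high block is $0$ and the low block is $2^w-1$.
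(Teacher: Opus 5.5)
Your proof is correct and matches the paper's: it also writes $\delta=(u-1)2^w+\bigl((2^w-1)-(u-1)\bigr)$ with $u=2^w-E+1$, and observes that the low block is the $w$-bit complement of the high block. You additionally check the range bound $0\le\delta(E)\le 2^{2w}-1$ explicitly, which the paper's proof leaves implicit.
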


\begin{proof}
The value $\delta(0)=2^{2w}-1$ has binary weight $2w$. For $1\le E\le 2^w$
put $u=2^w-E+1\in[1,2^w]$ and $v=u-1\in[0,2^w-1]$. Then
\[
\delta \;=\; (2^w-1)\,u \;=\; u\,2^w-u \;=\; (u-1)\,2^w+(2^w-u)
\;=\; v\,2^w+\bigl((2^w-1)-v\bigr).
\]
The two summands occupy disjoint binary positions. The number $(2^w-1)-v$ is
the bitwise complement of $v$ in $w$ bits, so
$\HW(\delta)=\HW(v)+\bigl(w-\HW(v)\bigr)=w$.
\end{proof}

\begin{lemma}[closed form of the packing]\label{lem:pack}
The map $v(x,y,z)=x+ny+n^2z$ is a bijection from $\mathcal B$ onto
$[0,\,n^2T-1]$, and
\[
M \;=\; \sum_{(x,y,z)\in\mathcal B} q^{\,v(x,y,z)}\;\delta\bigl(E(x,y,z)\bigr).
\]
\end{lemma}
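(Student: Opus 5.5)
The bijection claim is the usual mixed-radix statement. For $(x,y,z)\in\mathcal B$ we have $0\le x,y\le n-1$ and $0\le z\le T-1$, so $0\le v\le (n-1)+n(n-1)+n^2(T-1)=n^2T-1$. Conversely, any integer in $[0,n^2T-1]$ has a unique representation $x+ny+n^2z$ with $x,y\in[0,n-1]$: take $x$ as the remainder modulo $n$, $y$ as the next remainder, and $z$ as the quotient by $n^2$, which is then forced to lie in $[0,T-1]$. The box has $n^2T$ cells, so this is a bijection.

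For the formula, I expand the summand. Lemma~\ref{lem:delta} gives
\[
\delta(E)=(2^w-1)\bigl((2^w+1)-E\bigr).
\]
This identity holds as an ordinary integer identity, since $E\le 2^w$ by Lemma~\ref{lem:box} keeps the factor positive. Summing over $\mathcal B$ and writing $q^{v}=q^x(q^n)^y(q^{n^2})^z$, the right-hand side becomes
\[
(2^w-1)\Bigl((2^w+1)\sum q^v-\sum q^vE\Bigr).
\]
The first sum factors as a product of three geometric sums, and this product is $S(0,0,0)$. For the second sum I expand $E=P^2$ with $P=y^2+m-x^3-\bar Ax-nz$. The expansion gives fifteen monomials $c\,x^iy^jz^k$, namely the six squares and the nine cross terms with their signs. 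Each monomial contributes
\[
c\sum_{x}x^iq^x\sum_y y^j(q^n)^y\sum_z z^k(q^{n^2})^z=c\,S(i,j,k).
\]
Here $x^i$ pairs with $G_i(q,n-1)$, $y^j$ pairs with $G_j(q^n,n-1)$, and $z^k$ pairs with $G_k(q^{n^2},T-1)$. The convention $0^0=1$ makes the $j=0$ and $k=0$ terms correct.

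I then check the bookkeeping against the statement. The squares are $y^4,m^2,x^6,\bar A^2x^2,n^2z^2$, which give $S(0,4,0)$, $m^2S(0,0,0)$, $S(6,0,0)$, $\bar A^2S(2,0,0)$ and $n^2S(0,0,2)$. The positive cross terms are $2my^2$, $2\bar Ax^4$, $2nx^3z$ and $2\bar Anxz$. The negative cross terms are $-2x^3y^2$, $-2\bar Axy^2$, $-2nzy^2$, $-2mx^3$, $-2\bar Amx$ and $-2mnz$. This matches $S_E^+$ and $S_E^-$ exactly, so
\[
\sum q^vE=S_E^+-S_E^-.
\]

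The main obstacle is the truncated subtraction in $M$. I must show that $(2^w+1)S(0,0,0)+S_E^--S_E^+$ is nonnegative, so that $\tsub$ agrees with ordinary subtraction. This follows because this quantity equals $\sum q^v\bigl((2^w+1)-E\bigr)$, a sum of terms each at least $q^v\ge0$ by the bound $E\le 2^w$ from Lemma~\ref{lem:box}. Once that is settled, $M$ equals the claimed sum.
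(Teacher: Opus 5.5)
Your proposal is correct and follows the paper's proof essentially step for step. You expand $E=P^2$ into fifteen monomials, factor each sum over $\mathcal B$ as $S(i,j,k)$, and make the truncated subtraction exact via $\sum_{\mathcal B} q^v(2^w+1-E)\ge 0$; your explicit mixed-radix argument for the bijection is a small addition that the paper leaves implicit. One slip: the expansion has five squares and ten cross terms, not six and nine, as your own list (five squares, four positive and six negative cross terms) shows.
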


\begin{proof}
The polynomial $P$ has the five terms $y^2$, $m$, $-x^3$, $-\bar A x$ and $-nz$, so
its square $E$ expands into $15$ monomials in $x$, $y$ and $z$: the five
squares $y^4$, $m^2$, $x^6$, ${\bar A}^2x^2$ and $n^2z^2$, and ten mixed products.
Because $v$ is linear in $(x,y,z)$ and the three ranges are independent, the
sum of each monomial over $\mathcal B$ factors into three generalized
geometric progressions:
\[
\sum_{\mathcal B} x^i y^j z^k\, q^{\,x+ny+n^2z}
 \;=\; G_i(q,\,n-1)\,G_j(q^n,\,n-1)\,G_k(q^{n^2},\,T-1) \;=\; S(i,j,k).
\]
Collecting the $15$ monomials with their coefficients yields
\[
S_E^+-S_E^-=\sum_{\mathcal B} E\,q^{v},
\]
and the case $i=j=k=0$ yields $S(0,0,0)=\sum_{\mathcal B}q^v$. Consequently
\[
\bigl(2^w+1\bigr)S(0,0,0)+S_E^- - S_E^+
=\sum_{\mathcal B}q^v\bigl(2^w+1-E\bigr)\geq 0,
\]
so the truncated subtraction in the definition of $M$ is exact. Therefore
\[
M=\bigl(2^w-1\bigr)\Bigl(\bigl(2^w+1\bigr)S(0,0,0)+S_E^- -S_E^+\Bigr)
=\sum_{\mathcal B} q^{v}\,\bigl(2^w-1\bigr)\bigl(2^w+1-E\bigr)
=\sum_{\mathcal B} q^{v}\,\delta(E). \qedhere
\]
\end{proof}

\begin{proof}[Proof of Theorem \ref{thm:composite}]
By Lemmas \ref{lem:box} and \ref{lem:delta}, every coefficient $\delta(E)$ in
the sum of Lemma \ref{lem:pack} lies in $[0,\,q-1]$. The exponents $v$ are
pairwise distinct, so this sum is the base-$q$ representation of $M$. The
binary digits of distinct cells therefore occupy disjoint positions, and the
binary weights of the cells add. By Lemma \ref{lem:box} exactly $N$ of the
$n^2T$ cells have $E=0$, so Lemma \ref{lem:delta} gives
\[
\HW(M) \;=\; 2w\cdot N \;+\; w\cdot\bigl(n^2T-N\bigr) \;=\; w\,\bigl(N+n^2T\bigr).
\]
Thus $\HW(M)/w=N+n^2T$, and the truncated subtraction in the formula for
$N$ is exact. All displayed constituents of $M$ are arithmetic terms over
the natural numbers, so the displayed formula is itself an arithmetic term.
\end{proof}

\begin{remark}[comparison with the general construction]
The general construction of \cite{Pru26}, applied to this curve family at
modulus $n$, produces a term with five variables, a box of $(n+1)^{10}$
cells, digit width $n+90$, $49$ monomial contributions, and progressions up
to $G_{12}$. The term of Theorem~\ref{thm:composite} has three variables, a
box of $n^2T$ cells, digit width $n+12$, $15$ monomial contributions, and
progressions up to $G_6$. The integer $M$ of Theorem~\ref{thm:composite} has
$2(n+12)\,n^2\,T$ binary digits, which is approximately $2n^5$. The
corresponding integer of the general term has approximately
$2(n+90)(n+1)^{10}$ binary digits, which is approximately $2n^{11}$. At $n=6$
the two sizes are $55{,}728$ bits and approximately $5.4\cdot 10^{10}$ bits.
The ratio of the two sizes grows proportionally to $n^6$. The reduction comes
from giving each variable its own range instead of one common range.
\end{remark}

\section{Facts}\label{sec:facts}

This section collects the definitions and the standard results used in
the rest of the paper. Subsection \ref{sec:facts-fp} concerns the prime
field $\Fp$ and is used in Sections \ref{sec:tracefp}--\ref{sec:formulafpk};
Subsection \ref{sec:facts-zpk} concerns the rings $\Z/p^k\Z$ with $k\ge2$
and is used from Section \ref{sec:zoverptokz} on.

\subsection{Finite fields}\label{sec:facts-fp}

Let $p$ be an odd prime. Denote by $N$ the number of points $(x,y) \in \Fp^2$ satisfying $y^2 = x^3 + Ax + B$, where the parameters $A, B \in \N$. Put $\bar A = A \bmod p$ and $\bar B = B \bmod p$.

Write $f(x)=x^3+\bar A x+\bar B\in\Z[x]$. Let $\chi$ denote the symbol of Legendre modulo $p$:
$$\chi(u) = \left ( \frac{u}{p} \right ) =
\begin{cases}
    1, & \textrm{if } \exists v \neq 0\,\, u = v^2, \\
    0, & \textrm{if } u = 0, \\
    -1, & \textrm{otherwise.}
\end{cases}$$
Observe that $\chi$ is a character of the group $(\Fp \setminus \{0\}, \cdot)$, extended with $\chi(0) = 0$.

We shall use the following elementary facts about $\Fp$.

\begin{lemma}[Fermat, Euler]\label{lem:eulerfermat}
Let $p$ be an odd prime and $u\in\Fp$.
\begin{itemize}
\item[(i)] If $u\ne0$ then $u^{p-1}=1$. Consequently $1-u^{p-1}$ is the
indicator of $u=0$.
\item[(ii)] (Euler's criterion) $u^{(p-1)/2}=\chi(u)$ in $\Fp$.
\end{itemize}
\end{lemma}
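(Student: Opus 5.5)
Both parts follow from the fact that the nonzero elements of $\Fp$ form a cyclic group of order $p-1$; I take that as standard, since it is the only structural input needed.

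For (i), I will apply Lagrange's theorem to the multiplicative group $\Fp\setminus\{0\}$, which has order $p-1$. The order of any $u\neq0$ divides $p-1$, so $u^{p-1}=1$. The indicator statement then splits into two cases. If $u\ne0$, then $1-u^{p-1}=0$. If $u=0$, then $0^{p-1}=0$ because $p-1\ge1$, so $1-u^{p-1}=1$. Hence $1-u^{p-1}$ equals $1$ exactly when $u=0$.

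For (ii), the case $u=0$ is immediate: $0^{(p-1)/2}=0=\chi(0)$, since $(p-1)/2\ge1$ for odd $p$. Now let $u\ne0$ and put $s=u^{(p-1)/2}$. By (i), $s^2=1$. In a field the polynomial $X^2-1$ has only the roots $\pm1$, so $s=\pm1$. Next, if $u=v^2$ with $v\ne0$, then $s=v^{p-1}=1$. It remains to show that $s=1$ forces $u$ to be a square. Here I fix a generator $g$ of $\Fp^\times$ and write $u=g^e$. Then $s=g^{e(p-1)/2}$, and this equals $1$ if and only if $p-1$ divides $e(p-1)/2$, that is, if and only if $e$ is even, that is, if and only if $u$ is a square. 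So $s=1$ exactly for nonzero squares and $s=-1$ otherwise, which is $\chi(u)$.

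The only nontrivial input is the cyclicity of $\Fp^\times$. If one prefers to avoid it, the fallback is a counting argument. The squaring map on $\Fp^\times$ is two-to-one, so there are exactly $(p-1)/2$ nonzero squares. All of them are roots of $X^{(p-1)/2}-1$, and that polynomial has at most $(p-1)/2$ roots in a field. Therefore the nonzero squares are exactly its roots, and every nonsquare must satisfy $s=-1$.
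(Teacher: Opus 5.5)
Your proof is correct. Part (i) follows the paper's argument, Lagrange's theorem in $\Fp^{\times}$. You also spell out $0^{p-1}=0$ for the indicator claim, which the paper leaves implicit.

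For (ii), the paper's proof is exactly your fallback. Squaring is two-to-one on $\Fp^{\times}$, so there are $(p-1)/2$ nonzero squares, and all of them are roots of $X^{(p-1)/2}-1$. That polynomial has at most $(p-1)/2$ roots, so its roots are precisely the nonzero squares. The non-squares, having $s=\pm1$ by the same first step $s^2=1$ that you use, must give $-1$.

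Your primary route through a generator $g$ of $\Fp^{\times}$ is genuinely different. It gives a clean ``if and only if'': the squares are exactly the even powers of $g$, and $g^{e(p-1)/2}=1$ if and only if $e$ is even. This parity is well defined because $p-1$ is even. That route also extends directly to higher power residues. However, it imports the existence of a primitive root, and the usual proof of that fact rests on the same bound on the number of roots of a polynomial over a field, plus a count of elements by order. The counting argument needs only that bound. This is why it is the more economical choice for the paper's self-contained list of elementary facts, and why your fallback alone would suffice.
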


\begin{proof}
(i) The group $\Fp^{\times}$ has order $p-1$, so $u^{p-1}=1$ by Lagrange's
theorem. (ii) For $u=0$ both sides vanish. For $u\ne0$,
$(u^{(p-1)/2})^2=u^{p-1}=1$, so $u^{(p-1)/2}=\pm1$. The polynomial
$X^{(p-1)/2}-1$ has at most $(p-1)/2$ roots in the field $\Fp$. The map
$v\mapsto v^2$ is two-to-one on $\Fp^{\times}$, so there are exactly
$(p-1)/2$ nonzero squares, and each square $v^2$ satisfies
$(v^2)^{(p-1)/2}=v^{p-1}=1$. Hence the roots of $X^{(p-1)/2}-1$ are exactly
the nonzero squares, and the non-squares give $-1$.
\end{proof}

The number of solutions $y\in\Fp$
of $y^2=u$ is $1+\chi(u)$ for every $u\in\Fp$. Summing over $x$ gives
\begin{equation}\label{eq:count}
N \;=\; \sum_{x\in\Fp}\bigl(1+\chi(f(x))\bigr) \;=\; p+\sum_{x\in\Fp}\chi(f(x)).
\end{equation}

Denote by $a_p$ the quantity:
$$a_p = -\sum_{x\in\Fp}\chi(f(x)) = p - N.$$

If the cubic is nonsingular, $a_p$ is the {\bf trace of Frobenius} of the
resulting elliptic curve; see \cite{Sil09}. In the singular case we retain
the notation $a_p$ for the character-sum quantity $p-N$.

Abbreviate
$e=\lfloor p/2\rfloor=\tfrac{p-1}{2}$ and $w=p^2$. With these abbreviations, consider the expression:
\begin{equation}\label{eq:r}
r=\left ( \left \lfloor \frac{(2^{3w}+\bar A 2^w+\bar B)^e}{2^{w(p-1)}} \right \rfloor \md 2^w \right )\md p.
\end{equation}

Suppose now that the cubic is nonsingular, that is, $4\bar A^3+27\bar B^2\not\equiv 0\pmod p$, so that the equation defines an elliptic curve $E$ over $\Fp$. Its projective closure has exactly one point at infinity, hence $\# E(\Fp) = N+1$. By Hasse's Theorem, \cite[Theorem V.1.1]{Sil09}, $|\# E(\Fp) - (p+1)| \leq 2\sqrt p$, that is,
$$| N - p | \leq 2 \sqrt{p}. $$
For $n \geq 1$ denote by $N_n$ the number of points $(x,y) \in \mathbb F_{p^n}^2$ with $y^2 = f(x)$, so that $N_1 = N$ and $\# E(\mathbb F_{p^n}) = N_n + 1$. According to \cite[Theorem V.2.3.1]{Sil09}, if $\alpha, \beta \in \mathbb C$ are the solutions of
$$T^2 - a_p T + p = 0,$$
then $|\alpha| = |\beta| = \sqrt{p}$ and $\# E(\mathbb F_{p^n}) = p^n + 1 - \alpha^n - \beta^n$ for every $n \geq 1$; in affine terms,
$$ N_n = p^n - \alpha^n-\beta^n. $$
It follows that the formal series:
$$\sum_{k=1}^{\infty} N_k T^k = \sum_{k=1}^{\infty} (p T)^k - \sum_{k=1}^{\infty} (\alpha T)^k - \sum_{k=1}^{\infty} (\beta T)^k = $$
$$= \frac{pT}{1-pT} - \frac{\alpha T}{1-\alpha T} - \frac{\beta T}{1-\beta T}.$$
A standard computation shows that this series is a rational function with coefficients in $\mathbb Z$.

In \cite{PS24}, the following term-extraction result is proved:

\begin{theorem}\label{ThmExtraction1}

Let $t(n)$ be a sequence of natural numbers, and let $R$ be the radius
of convergence at zero of its generating function
\[
\GF_t(z)=\sum_{n=0}^{\infty}t(n)z^n.
\]
Suppose that the integers $b,m,n$ satisfy $b\ge2$, $n\ge m\ge2$,
$b^{-m}<R$, and $t(r)<b^{r-2}$ for all integers $r\ge m$. Then

\begin{equation}\label{IdentityExtraction}
t(n)=\left\lfloor b^{n^2}\GF_t(b^{-n})\right\rfloor\bmod b^n.
\end{equation}
\end{theorem}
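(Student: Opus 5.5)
The identity will follow by writing $b^{n^2}\GF_t(b^{-n})$ as a power series in $b^{-n}$ and splitting it into three parts: the terms that become integers, the coefficient $t(n)$ we want, and a tail too small to affect the integer part. Concretely,
\[
b^{n^2}\GF_t(b^{-n})=\sum_{r=0}^{\infty}t(r)\,b^{n(n-r)}
=\underbrace{\sum_{r=0}^{n-1}t(r)\,b^{n(n-r)}}_{H}+t(n)+\underbrace{\sum_{r>n}t(r)\,b^{-n(r-n)}}_{L}.
\]
The series converges because $b^{-n}\le b^{-m}<R$, so the rearrangement is legitimate. Every term of $H$ has exponent $n(n-r)\ge n$, so $H$ is an integer divisible by $b^n$.

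The claim then reduces to two estimates:
\[
0\le L<1 \qquad\text{and}\qquad 0\le t(n)<b^n.
\]
Given these, $\lfloor b^{n^2}\GF_t(b^{-n})\rfloor=H+t(n)$. Reducing modulo $b^n$ kills $H$ and leaves $t(n)$, which is already in $[0,b^n)$, and that is exactly \eqref{IdentityExtraction}.

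The second estimate is immediate. Since $n\ge m$, the hypothesis gives $t(n)<b^{n-2}<b^n$.

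The first estimate is the main step. Write $s=r-n\ge1$; then $r\ge n\ge m$, so $t(r)<b^{r-2}=b^{n+s-2}$ applies to every tail term. Hence
\[
L<\sum_{s\ge1}b^{n+s-2-ns}=b^{n-2}\sum_{s\ge1}b^{-s(n-1)}=\frac{b^{n-2}\,b^{-(n-1)}}{1-b^{-(n-1)}}=\frac{b^{-1}}{1-b^{1-n}}.
\]
Since $n\ge2$ and $b\ge2$, we have $b^{1-n}\le 1/2$, so $L<2/b\le1$. The inequality is strict because each term is bounded strictly and the sum has at least one term.

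Finally, $L\ge0$ since all $t(r)\ge0$. This completes the argument. The only point needing care is that the tail is bounded by a convergent geometric series, which uses $n\ge2$.
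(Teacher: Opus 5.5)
Your proof is correct: the head $\sum_{r<n}t(r)\,b^{n(n-r)}$ is a multiple of $b^n$, you have $0\le t(n)<b^{n-2}<b^n$, and the tail is strictly less than $b^{-1}/(1-b^{1-n})\le 2/b\le 1$, with the strictness you note being exactly what is needed when $b=2$. The paper gives no proof of this statement and only cites it from \cite{PS24}; your decomposition into a head divisible by $b^n$, the target digit, and a geometric tail below $1$ is the same digit-extraction mechanism the paper itself uses in Lemma~\ref{lem:extract} and in the tail estimate in the proof of Theorem~\ref{thm:term}.
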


A sequence satisfying a linear recurrence with constant coefficients
is called C-recursive; its generating function is rational. If the
sequence is nonnegative and the hypotheses of
Theorem~\ref{ThmExtraction1} hold, substituting this rational function
gives an arithmetic term for its $n$-th entry. For an integer
C-recursive sequence $s(n)$ that takes negative values, one first adds
$c^{n+1}$ with $c$ large enough to obtain a nonnegative sequence
\cite{PS24}. Theorem~\ref{thm:term} uses this construction for the
counts over extension fields.

The following two lemmas are simple technical observations. The first is used in
Lemmas \ref{lem:cong} and \ref{lem:bound} and in Proposition \ref{prop:R},
the second in Lemma \ref{lem:extract} and in Proposition \ref{prop:R}.

\begin{lemma}\label{lem:sumofpowers} Let $p$ be an odd prime. Then:
    $$S_k = \sum_{x \in \Fp} x^k = \begin{cases}
        -1, & \textrm{if } k > 0 \wedge (p-1) | k ,\\
        0, & \textrm{otherwise.}
    \end{cases}$$
\end{lemma}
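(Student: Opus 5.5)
The plan is a case split on $k$, using the cyclic structure of $\Fp^{\times}$ for the main case. Throughout we use the convention $0^0=1$ fixed in the introduction, so the term $x=0$ contributes $0^0=1$ when $k=0$ and $0$ when $k>0$.

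The case $k=0$ needs separate care. There every $x$, including $x=0$, contributes $1$, so $S_0=p\equiv 0$ in $\Fp$. This agrees with the claim, since $k>0$ fails and the value is $0$.

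For $k>0$ the term $x=0$ vanishes, so $S_k=\sum_{x\in\Fp^{\times}}x^k$. If $(p-1)\mid k$, then Lemma~\ref{lem:eulerfermat}(i) gives $x^k=1$ for every nonzero $x$. Hence $S_k=p-1=-1$ in $\Fp$.

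If $(p-1)\nmid k$, we need some $c\in\Fp^{\times}$ with $c^k\ne1$. The argument then runs as follows.

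\begin{itemize}
\item Multiplication by $c$ permutes $\Fp^{\times}$, so
\[
S_k=\sum_{x}(cx)^k=c^kS_k .
\]
\item Therefore $(c^k-1)S_k=0$, and since $c^k\ne1$ we get $S_k=0$.
\end{itemize}

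The only point needing justification is the existence of such a $c$, and this is the main (though standard) obstacle. The polynomial $X^k-1$ has at most $k$ roots, but $k$ may exceed $p-1$, so a bare root count does not suffice. Instead, write $k=q(p-1)+s$ with $0<s<p-1$. By Lemma~\ref{lem:eulerfermat}(i), $x^k=x^s$ for every nonzero $x$. The polynomial $X^s-1$ has at most $s<p-1$ roots in the field $\Fp$, so some nonzero $c$ satisfies $c^s\ne1$, hence $c^k\ne1$.

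This avoids needing a primitive root. Alternatively, one can invoke cyclicity of $\Fp^{\times}$ with a generator $g$ and sum the geometric series $\sum_{i=0}^{p-2}g^{ik}=\frac{g^{k(p-1)}-1}{g^k-1}=0$.
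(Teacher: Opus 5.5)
Your proof is correct and follows the paper's own argument. It uses the same three cases ($k=0$, $k>0$ with $(p-1)\mid k$, and the rest) and the same scaling $x\mapsto cx$, which gives $(c^k-1)S_k=0$. The one addition is that you justify the existence of $c$ with $c^k\ne1$, by reducing $k$ modulo $p-1$ and bounding the number of roots of $X^s-1$; the paper asserts this without proof.
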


\begin{proof}
    With the convention $0^0=1$, $S_0 = p = 0$ in $\Fp$. $S_{m(p-1)} = p-1 = -1$. For all other $k \in \mathbb N$,  there is some $c \in \Fp \setminus \{0\}$ such that $c^k \neq 1$. As $x \leadsto cx$ is a permutation of $\Fp$,
    $$S_k(c^k - 1) = 0,$$
    so $S_k = 0$.
\end{proof}

\begin{lemma}\label{lem:inequality}
    If $n \geq 3$ is a natural number, then
    $$(2n-1)^{n-1} < 2^{n^2}.$$
\end{lemma}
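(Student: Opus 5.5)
The plan is to compare logarithms to base $2$ and then bound a single elementary quantity. Taking $\log_2$, the inequality $(2n-1)^{n-1}<2^{n^2}$ is equivalent to
\[
(n-1)\log_2(2n-1) \;<\; n^2 .
\]
Since $2n-1<2n\le 2^n$ for every $n\ge1$ (the last step is the standard $n\le 2^{n-1}$, proved by induction from $1\le 2^0$ and $n+1\le 2n\le 2^n$), we get $\log_2(2n-1)<n$. Multiplying by $n-1\ge 0$ gives $(n-1)\log_2(2n-1)\le (n-1)n<n^2$. For $n\ge 3$ the factor $n-1$ is positive, so the inequality is strict, and it does not depend on the logarithm at all.

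I would state the argument without logarithms, as a direct chain of integer inequalities:
\[
(2n-1)^{n-1} \;<\; (2^n)^{n-1} \;=\; 2^{n^2-n} \;<\; 2^{n^2}.
\]
The first step uses $0<2n-1<2^n$, which is strict for $n\ge 1$, raised to the positive power $n-1$; this is where $n\ge2$ is needed. The second step holds because $n\ge1$. So the hypothesis $n\ge3$ is more than sufficient. Presumably the paper states the lemma for $n\ge 3$ only because it will later be applied with $n=p$, a prime $\ge3$.

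No step is a genuine obstacle. The only point needing care is the elementary bound $2n-1<2^n$, which is settled by the short induction above or by the binomial expansion $2^n=(1+1)^n\ge 1+n+\binom n2\ge 2n$ for $n\ge 3$.
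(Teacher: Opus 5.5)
Your proof is correct and is essentially the paper's argument: the paper bounds $(2n-1)^{n-1}<(2n)^n=2^{n(1+\log_2 n)}$ and then uses $1+\log_2 n<n$ (equivalently $2n<2^n$), which is where its hypothesis $n\ge 3$ enters. Your variant keeps the exponent $n-1$ and only needs $2n-1<2^n$, so the spare factor $2^n$ makes the chain valid already for $n\ge 2$ without any logarithms.
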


\begin{proof}
    $$(2n-1)^{n-1} < 2^n n^n = 2^{n(1+\log_2 n)} < 2^{n^2},$$
    because for such numbers $1 + \log_2 n < n$.
\end{proof}

\subsection{The rings $\Z/p^k\Z$}\label{sec:facts-zpk}

{\bf $p$-adic notation.} Throughout this subsection and in Sections
\ref{sec:zoverptokz}--\ref{sec:tate}, $p\ge5$ is a prime. We write $\Zp$
for the ring of $p$-adic integers, $\vp$ for the $p$-adic valuation with
$\vp(0)=\infty$, and $\Zp^{\times}=\{u\in\Zp:\vp(u)=0\}$ for the group of
units. Every nonzero $x\in\Zp$ is uniquely $p^{\vp(x)}$ times a unit, and
$\vp(xy)=\vp(x)+\vp(y)$, $\vp(x+y)\ge\min(\vp(x),\vp(y))$ with equality
when $\vp(x)\ne\vp(y)$. For $k\ge1$ the rings $\Z/p^k\Z$ and
$\Zp/p^k\Zp$ are the same, so ``modulo $p^k$'' makes sense for $p$-adic
integers, and a polynomial with coefficients in $\Zp$ defines a function
on $(\Z/p^k\Z)^2$. For $a\in\Z$ and $b\in\Z$ with $p\nmid b$, the fraction
$a/b$ denotes the corresponding element of $\Zp$. For a unit $u$ we put
$\chi(u)=\chi(u\bmod p)$; then $\chi(a/b)=\chi(ab)$, because
$a/b=ab\cdot b^{-2}$ and $b^{-2}$ is a nonzero square modulo $p$. Finally,
for $0\le a\le k$, the residues $x$ modulo $p^k$ with $p^a\mid x$ are
exactly $x=p^aX$ with $X$ modulo $p^{k-a}$; there are $p^{k-a}$ of them.

{\bf The hypothesis $p\ge5$.} Fix $A,B\in\Zp$, put $f(x)=x^3+Ax+B$ and
$\Delta=4A^3+27B^2$. The discriminant of the Weierstra\ss{} equation
$y^2=f(x)$ is $-16\Delta$ \cite[III.1]{Sil09}, so for $p\ge5$ the
valuation $\vp(\Delta)$ is the valuation of the discriminant. The
hypothesis $p\ge5$ is used in three ways: $2$ and $3$ are units, so the
substitution $x\mapsto x+\tilde\rho$ of Lemma \ref{lem:shift} and the
coefficient comparisons of Lemma \ref{lem:cubic} are available; the
dichotomy node/cusp of Lemma \ref{lem:cubic}(iii) needs $p\ne3$, since
over $\F_3$ the polynomial $x^3-1=(x-1)^3$ has a triple root although its
constant term is a unit; and the lifting Lemma \ref{lem:hensel} uses the
partial derivative $2y$, which vanishes identically for $p=2$.

\begin{lemma}[cubics over $\Fp$]\label{lem:cubic}
Let $p\ge5$, $a,b\in\Fp$, $g(x)=x^3+ax+b$ and $D=4a^3+27b^2$.
\begin{itemize}
\item[(i)] $g$ has a repeated root in $\overline{\Fp}$ if and only if $D=0$.
\item[(ii)] If $D=0$, the repeated root $\rho$ is unique, it lies in
$\Fp$, and $g=(x-\rho)^2(x+2\rho)$; in particular
\[
  a=-3\rho^2,\qquad b=2\rho^3 .
\]
\item[(iii)] If $D=0$, then $\rho$ is a triple root if and only if
$\rho=0$, if and only if $a=b=0$. If $a\ne0$, then $\rho=-3b/(2a)\ne0$ is
a double root; if $a=0$, then $b=0$.
\item[(iv)] If $D=0$, the number of $(x,y)\in\Fp^2$ with $y^2=g(x)$ is
$p-\chi(3\rho)$; it equals $p$ for a triple root, and for a double root
$\chi(3\rho)=\chi(-2ab)\in\{\pm1\}$.
\end{itemize}
\end{lemma}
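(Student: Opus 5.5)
The plan is to work directly with the depressed cubic and use that $2,3$ are invertible in $\Fp$ for $p\ge5$.

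For (i), I would take the roots $\alpha_1,\alpha_2,\alpha_3$ of $g$ in $\overline{\Fp}$. Their sum is $0$, since $g$ has no $x^2$ term. The standard identity $\prod_{i<j}(\alpha_i-\alpha_j)^2=-4a^3-27b^2=-D$ holds over any field. It can be checked over $\Z[a,b]$ as a universal polynomial identity and then specialized. So a repeated root exists if and only if $D=0$.

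For (ii), I would first argue uniqueness. If $D=0$ and $\rho$ is a repeated root, the third root is $-2\rho$, because the roots sum to $0$. Hence $g=(x-\rho)^2(x+2\rho)$. Comparing coefficients gives $a=\rho^2-4\rho^2=-3\rho^2$ and $b=2\rho^3$. If two distinct roots were both repeated, $g$ would have degree at least $4$, which is impossible. To see that $\rho\in\Fp$: if $a\ne0$, then $\rho=2\rho^3/(-3\rho^2)\cdot\frac{3}{2}\cdot\ldots$; more cleanly, $b/a=-\frac{2}{3}\rho$, so $\rho=-3b/(2a)\in\Fp$. If $a=0$, then $\rho^2=0$, so $\rho=0\in\Fp$. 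The same computation also settles (iii). $\rho$ is a triple root if and only if $\rho=-2\rho$, i.e.\ $3\rho=0$, i.e.\ $\rho=0$ (using $p\ne3$). By the formulas this happens if and only if $a=b=0$. If $a\ne0$, then $\rho\ne0$ and $\rho=-3b/(2a)$. If $a=0$, then $\rho=0$ and $b=0$.

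For (iv), I would use \eqref{eq:count} in the form $N=p+\sum_x\chi(g(x))$. Multiplicativity gives $\chi(g(x))=\chi((x-\rho)^2)\chi(x+2\rho)$. This equals $\chi(x+2\rho)$ for $x\ne\rho$ and $0$ for $x=\rho$. Since $\chi$ is a nontrivial character, its values over all of $\Fp$ sum to $0$. Thus
\[
\sum_{x\in\Fp}\chi(x+2\rho)=0,
\]
and removing the term $x=\rho$ gives $\sum_x\chi(g(x))=-\chi(3\rho)$. Hence the count is $p-\chi(3\rho)$. For a triple root $\rho=0$, so $\chi(0)=0$ and the count is $p$. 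For a double root, $\rho=-3b/(2a)$ gives $3\rho=-9b/(2a)$. Its symbol equals $\chi(-2ab)$, because $-9b/(2a)=-2ab\cdot 9/(4a^2)$ and $9/(4a^2)$ is a nonzero square. Finally $b=2\rho^3\ne0$, so the value is $\pm1$.

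The only step I expect to need care is the discriminant identity in (i). I would cite it as standard, or verify it through the resultant of $g$ and $g'=3x^2+a$.
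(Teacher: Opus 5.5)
Your proof is correct. Parts (ii)–(iv) follow the paper closely, but you obtain (i) and the rationality of $\rho$ differently.

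For (i), the paper does not use the discriminant identity. It notes that $\rho$ is a repeated root iff $g(\rho)=g'(\rho)=0$, which gives $a=-3\rho^2$ and $b=2\rho^3$ and hence $D=0$ directly. For the converse it treats two cases. If $a=0$, then $27b^2=0$ forces $b=0$, and $0$ is a triple root. If $a\ne0$, it checks that $\rho=-3b/(2a)$ satisfies $\rho^2=-a/3$, hence $g'(\rho)=g(\rho)=0$.

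For $\rho\in\Fp$, the paper argues that Frobenius fixes the unique repeated root. You instead read off $\rho=-3b/(2a)$, or $\rho=0$, from the factorization. That is shorter and makes the Frobenius step unnecessary.

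In (iv), you derive $\chi(3\rho)=\chi(-2ab)$ from $3\rho=-2ab\,(3/(2a))^2$, while the paper uses $-2ab=(3\rho)(2\rho^2)^2$; both are fine.

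The trade-off is this: your route is quicker via Vieta but imports the identity $\prod_{i<j}(\alpha_i-\alpha_j)^2=-D$. The paper's argument for (i) is self-contained and already produces the explicit root used in (iii).

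If you keep the identity, the natural place to verify it is $\Z[\alpha_1,\alpha_2]$ with $\alpha_3=-\alpha_1-\alpha_2$. Its left side involves the roots, so it is not literally an identity in $\Z[a,b]$. Alternatively, use $\operatorname{Res}(g,g')=D$, as you suggest. Also delete the abandoned first attempt at the formula for $\rho$ in (ii).
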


\begin{proof}
(i) An element $\rho\in\overline{\Fp}$ is a repeated root of $g$ if and
only if $g(\rho)=g'(\rho)=0$. If so, $g'(\rho)=3\rho^2+a=0$ gives
$a=-3\rho^2$, and then $g(\rho)=\rho^3-3\rho^3+b=0$ gives $b=2\rho^3$;
hence $D=4\cdot(-27\rho^6)+27\cdot4\rho^6=0$. Conversely, let $D=0$. If
$a=0$ then $b=0$ and $0$ is a triple root. If $a\ne0$, put
$\rho=-3b/(2a)$. From $27b^2=-4a^3$ we get
$\rho^2=9b^2/(4a^2)=-a/3$, so $g'(\rho)=3\rho^2+a=0$, and
$g(\rho)=\rho(\rho^2+a)+b=\rho\cdot\tfrac{2a}{3}+b=-b+b=0$.

(ii) A polynomial of degree three cannot have two distinct repeated
roots, since their multiplicities would add up to at least four. The
Frobenius map $x\mapsto x^p$ permutes the roots of $g$ in $\overline{\Fp}$
and preserves multiplicities, so it fixes the unique repeated root
$\rho$; hence $\rho\in\Fp$. Write $g=(x-\rho)^2(x-\sigma)$ with
$\sigma\in\overline{\Fp}$. Comparing the coefficients of $x^2$ gives
$2\rho+\sigma=0$, so $\sigma=-2\rho\in\Fp$; comparing the coefficients
of $x$ and the constant terms gives
$a=\rho^2+2\rho\sigma=-3\rho^2$ and $b=-\rho^2\sigma=2\rho^3$.

(iii) $\rho$ is a triple root if and only if $\sigma=\rho$, that is
$3\rho=0$, that is $\rho=0$ because $p\ne3$. If $\rho=0$ then $a=b=0$
by (ii); if $a=b=0$ then $g=x^3$ and $\rho=0$. If $a\ne0$, then
$\rho\ne0$ by (ii), and $-3b/(2a)=-3\cdot2\rho^3/(2\cdot(-3\rho^2))=\rho$.
If $a=0$ then $\rho=0$, so $b=0$.

(iv) By \eqref{eq:count} the count is $p+\sum_{x\in\Fp}\chi(g(x))$. For
$x\ne\rho$ we have $\chi(g(x))=\chi((x-\rho)^2)\chi(x-\sigma)=\chi(x-\sigma)$,
and $\chi(g(\rho))=0$. Since $x\mapsto x-\sigma$ permutes $\Fp$, and
$\chi$ takes the value $1$ on the $(p-1)/2$ nonzero squares, the value
$-1$ on the $(p-1)/2$ nonsquares and $0$ at $0$, we get
$\sum_{x\in\Fp}\chi(x-\sigma)=0$. Therefore
$\sum_x\chi(g(x))=-\chi(\rho-\sigma)=-\chi(3\rho)$. For a triple root
$3\rho=0$ and the count is $p$. For a double root $\rho\ne0$ and, by
(ii), $-2ab=-2(-3\rho^2)(2\rho^3)=12\rho^5=(3\rho)(2\rho^2)^2$, so
$\chi(-2ab)=\chi(3\rho)$.
\end{proof}

\begin{lemma}[the shift to a critical point]\label{lem:shift}
Let $A,B\in\Zp$, $f(x)=x^3+Ax+B$, $\Delta=4A^3+27B^2$, and let
$\tilde\rho\in\Zp$ satisfy $3\tilde\rho^{\,2}+A=0$. Put $d=3\tilde\rho$
and $c=f(\tilde\rho)=B-2\tilde\rho^{\,3}$. Then
\[
  f(\tilde\rho+u)=u^3+d\,u^2+c
  \qquad\text{and}\qquad
  \Delta=27\,c\,(4\tilde\rho^{\,3}+c).
\]
If moreover $\tilde\rho$ is a unit and $p\mid c$, then
$4\tilde\rho^{\,3}+c$ is a unit and $\vp(c)=\vp(\Delta)$.
\end{lemma}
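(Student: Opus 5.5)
The plan is to verify the lemma by direct expansion and then read off valuations from the factorization.

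First, the shift identity. Expand $f(\tilde\rho+u)=(\tilde\rho+u)^3+A(\tilde\rho+u)+B$ and collect powers of $u$:
\[
f(\tilde\rho+u)=u^3+3\tilde\rho\,u^2+(3\tilde\rho^{\,2}+A)u+(\tilde\rho^{\,3}+A\tilde\rho+B).
\]
The linear coefficient vanishes by the hypothesis $3\tilde\rho^{\,2}+A=0$. The quadratic coefficient is $d=3\tilde\rho$. For the constant term, substitute $A=-3\tilde\rho^{\,2}$ to get $\tilde\rho^{\,3}-3\tilde\rho^{\,3}+B=B-2\tilde\rho^{\,3}=c$. This also confirms that $c=f(\tilde\rho)$.

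Second, the discriminant identity. Substitute $A=-3\tilde\rho^{\,2}$ and $B=c+2\tilde\rho^{\,3}$ into $\Delta=4A^3+27B^2$:
\[
\Delta=-108\tilde\rho^{\,6}+27\bigl(c^2+4\tilde\rho^{\,3}c+4\tilde\rho^{\,6}\bigr)=27c^2+108\tilde\rho^{\,3}c=27c\,(c+4\tilde\rho^{\,3}).
\]
This is a polynomial identity in $\Zp$, so no division is needed.

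Third, the valuation claim. Since $\tilde\rho$ is a unit and $p\ge5$, the element $4\tilde\rho^{\,3}$ is a unit. Because $p\mid c$, we have $\vp(c)\ge1>0=\vp(4\tilde\rho^{\,3})$, so $4\tilde\rho^{\,3}+c$ has valuation $0$ by the strict ultrametric equality, i.e.\ it is a unit. Also $27$ is a unit since $p\neq3$. Hence $\vp(\Delta)=\vp(27)+\vp(c)+\vp(4\tilde\rho^{\,3}+c)=\vp(c)$. This holds also when $c=0$, with both sides equal to $\infty$.

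There is no real obstacle here. The only point requiring care is remembering that $p\ge5$ is exactly what makes $4$ and $27$ units.
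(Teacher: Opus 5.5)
Your proposal is correct and follows essentially the same route as the paper: expand $f(\tilde\rho+u)$ and use $3\tilde\rho^{\,2}+A=0$, then substitute $A^3=-27\tilde\rho^{\,6}$ and $B=2\tilde\rho^{\,3}+c$ to get the factorization. The valuation claim then follows from $4\tilde\rho^{\,3}+c\equiv4\tilde\rho^{\,3}\not\equiv0\pmod p$ and from $27$ being a unit. Your remark on the case $c=0$ is a harmless addition.
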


\begin{proof}
Expanding,
$f(\tilde\rho+u)=(\tilde\rho^{\,3}+A\tilde\rho+B)+(3\tilde\rho^{\,2}+A)u
+3\tilde\rho\,u^2+u^3$, and the coefficient of $u$ vanishes by
hypothesis; the constant term is $f(\tilde\rho)=\tilde\rho^{\,3}
-3\tilde\rho^{\,3}+B=B-2\tilde\rho^{\,3}$. For the identity, $A^3=-27\tilde\rho^{\,6}$
and $B=2\tilde\rho^{\,3}+c$ give
$4A^3+27B^2=-108\tilde\rho^{\,6}+27(4\tilde\rho^{\,6}+4\tilde\rho^{\,3}c+c^2)
=27c(4\tilde\rho^{\,3}+c)$. If $\tilde\rho$ is a unit and $p\mid c$, then
$4\tilde\rho^{\,3}+c\equiv4\tilde\rho^{\,3}\not\equiv0\pmod p$, and $27$ is
a unit, so $\vp(\Delta)=\vp(c)$.
\end{proof}

{\bf Hensel's lemma.} We use it in two forms: the classical one-variable
form, which produces $p$-adic roots, and a two-variable counting form,
which counts the lifts of a smooth point of a plane curve. The $p$-adic
statement is \cite[Ch.~II, \S2.2]{Serre73}; the counting form is what its
proof actually shows.

\begin{lemma}[Hensel, one variable]\label{lem:hensel1}
Let $g\in\Zp[X]$ and $x_0\in\Zp$ with $g(x_0)\equiv0\pmod p$ and
$g'(x_0)\not\equiv0\pmod p$. Then for every $j\ge1$ the congruence
$g(x)\equiv0\pmod{p^j}$ has exactly one solution $x$ modulo $p^j$ with
$x\equiv x_0\pmod p$. These solutions are compatible under reduction, and
they define the unique $\xi\in\Zp$ with $g(\xi)=0$ and $\xi\equiv x_0\pmod p$.
\end{lemma}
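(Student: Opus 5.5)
We prove Lemma~\ref{lem:hensel1} by induction on $j$, lifting one $p$-adic digit at a time, and then pass to the inverse limit. Throughout we use that $g$ has coefficients in $\Zp$, so for $x,h\in\Zp$ the Taylor expansion
\[
g(x+h)=g(x)+g'(x)\,h+h^2\,Q(x,h)
\]
holds with $Q\in\Zp[x,h]$. The coefficients of this expansion are $g^{(i)}(x)/i!$, which are again polynomials with coefficients in $\Zp$, since $\binom{n}{i}\in\Z$. We also use that $g'(x)\equiv g'(x_0)\pmod p$ whenever $x\equiv x_0\pmod p$, so $g'(x)$ is a unit for every such $x$.

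For $j=1$ the statement is trivial: the only residue modulo $p$ congruent to $x_0$ is $x_0$ itself, and $g(x_0)\equiv0\pmod p$ by hypothesis. For the inductive step, suppose $x_j$ is the unique solution modulo $p^j$ with $x_j\equiv x_0\pmod p$. Any solution $x$ modulo $p^{j+1}$ with $x\equiv x_0\pmod p$ reduces to a solution modulo $p^j$. Hence it has the form $x=x_j+p^jt$ with $t\in\{0,\dots,p-1\}$, where $x_j$ is a fixed representative. Since $2j\ge j+1$, the expansion gives
\[
g(x_j+p^jt)\equiv g(x_j)+p^j t\,g'(x_j)\pmod{p^{j+1}}.
\]
Write $g(x_j)=p^j s$. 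The condition becomes $s+t\,g'(x_j)\equiv0\pmod p$. Because $g'(x_j)$ is a unit, this has exactly one solution $t$ modulo $p$. This gives existence and uniqueness modulo $p^{j+1}$. One must also check that the choice of representative $x_j$ does not matter. Changing $x_j$ by a multiple of $p^j$ only reparametrizes $t$, so the set of lifts, viewed as residues modulo $p^{j+1}$, is independent of the representative.

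Compatibility follows from the uniqueness in the inductive step: the solution $x_{j+1}$ reduces modulo $p^j$ to a solution congruent to $x_0$, which must be $x_j$. The sequence $(x_j)$ therefore defines an element $\xi\in\Zp=\varprojlim\Z/p^j\Z$ with $\xi\equiv x_0\pmod p$. We have $g(\xi)\equiv g(x_j)\equiv0\pmod{p^j}$ for every $j$, so $g(\xi)=0$. For uniqueness of $\xi$: any $\eta\in\Zp$ with $g(\eta)=0$ and $\eta\equiv x_0\pmod p$ reduces modulo $p^j$ to a solution congruent to $x_0$, hence $\eta\equiv x_j\pmod{p^j}$ for all $j$, and so $\eta=\xi$.

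The only delicate point is the representative-independence and the exact bookkeeping of the error term $p^{2j}t^2Q$, and this is routine. Alternatively, one may simply cite \cite[Ch.~II, \S2.2]{Serre73}, as the paper indicates.
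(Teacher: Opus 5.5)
Your proposal is correct and follows the paper's proof essentially step by step: induction on $j$ via the lifts $x_j+p^jt$, the first-order Taylor congruence modulo $p^{j+1}$ (using $2j\ge j+1$), the unit $g'(x_j)\equiv g'(x_0)\pmod p$ forcing a unique digit $t$, and then the inverse limit for existence and uniqueness of $\xi$. Your remark that the result does not depend on the chosen representative of $x_j$ is a harmless extra detail that the paper leaves implicit.
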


\begin{proof}
Induction on $j$; the case $j=1$ is the hypothesis. Let $x_j$ be the
unique solution modulo $p^j$ lifting $x_0$. A solution modulo $p^{j+1}$
lifting $x_0$ reduces modulo $p^j$ to a solution lifting $x_0$, hence to
$x_j$; so it is of the form $x_j+p^js$ with $0\le s<p$. Taylor's formula
gives $g(x_j+p^js)\equiv g(x_j)+p^js\,g'(x_j)\pmod{p^{2j}}$, and
$2j\ge j+1$. Writing $g(x_j)=p^jc$, the condition modulo $p^{j+1}$ is
$c+s\,g'(x_j)\equiv0\pmod p$. As $g'(x_j)\equiv g'(x_0)\not\equiv0
\pmod p$, exactly one $s$ works. The compatible sequence $(x_j)_j$ is an
element $\xi$ of $\Zp=\varprojlim\Z/p^j\Z$ with $g(\xi)\equiv0$ modulo
every $p^j$, so $g(\xi)=0$; any root $\equiv x_0\pmod p$ reduces to $x_j$
for every $j$, so $\xi$ is unique.
\end{proof}

\begin{corollary}\label{cor:hensel1}
\begin{itemize}
\item[(a)] A unit $u\in\Zp^{\times}$ is a square in $\Zp$ if and only if
$\chi(u)=1$, and then it has exactly two square roots, one in each of the
two residue classes $\pm v$ with $v^2\equiv u\pmod p$. In particular
every unit $u\equiv1\pmod p$ has a unique square root $\equiv1\pmod p$.
\item[(b)] Let $A,B\in\Zp$ with $p\mid\Delta$ and $p\nmid A$, and let
$\rho\in\Fp$ be the double root of $f$ modulo $p$. Then $3X^2+A$ has a
unique root $\tilde\rho\in\Zp$ with $\tilde\rho\equiv\rho\pmod p$; it is
a unit, and $\vp(f(\tilde\rho))=\vp(\Delta)$.
\end{itemize}
\end{corollary}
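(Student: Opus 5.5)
Both parts follow from Lemma~\ref{lem:hensel1}, applied to a suitable polynomial in each case.

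\emph{Part (a).} Let $u$ be a unit. If $u=s^2$ with $s\in\Zp$, then $s$ is a unit and $s\bmod p\ne0$, so $\chi(u)=1$. Conversely, suppose $\chi(u)=1$, and choose $v\in\Fp^{\times}$ with $v^2\equiv u\pmod p$. Apply Lemma~\ref{lem:hensel1} to $g(X)=X^2-u$ at $x_0=v$. The hypothesis holds because $g'(v)=2v\not\equiv0\pmod p$, as $p$ is odd. This gives a unique root $\xi\equiv v$, and likewise a unique root $\equiv -v$. These classes differ because $v\not\equiv -v$. Any square root of $u$ reduces modulo $p$ to a square root of $u\bmod p$, that is, to $\pm v$. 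So there are exactly two square roots, one in each class. The particular case is $v=1$.

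\emph{Part (b).} Reduce modulo $p$. Since $p\mid\Delta$, Lemma~\ref{lem:cubic}(i),(ii) gives a double root $\rho\in\Fp$ with $\bar A=-3\rho^2$. As $p\nmid A$, Lemma~\ref{lem:cubic}(iii) gives $\rho\ne0$.
\begin{itemize}
\item Put $g(X)=3X^2+A$. Then $g(\rho)\equiv3\rho^2+A\equiv0$, and $g'(\rho)=6\rho\not\equiv0$ because $p\ge5$ and $\rho\ne0$.
\item Lemma~\ref{lem:hensel1} then yields a unique $\tilde\rho\in\Zp$ with $3\tilde\rho^{\,2}+A=0$ and $\tilde\rho\equiv\rho\pmod p$. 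It is a unit since $\rho\ne0$.
\item Apply Lemma~\ref{lem:shift} with this $\tilde\rho$. It gives $\Delta=27c(4\tilde\rho^{\,3}+c)$ with $c=f(\tilde\rho)$.
\item We have $c\equiv f(\rho)\equiv0\pmod p$, since $\rho$ is a root of $f$ modulo $p$.
\item The final clause of Lemma~\ref{lem:shift} then gives $\vp(c)=\vp(\Delta)$.
\end{itemize}

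\emph{Edge case.} When $\Delta=0$, the same identity gives $c=0$, because $4\tilde\rho^{\,3}+c$ is a unit. Hence both valuations are $\infty$. Nothing in this case is really an obstacle; the only point needing care is checking that $\rho\ne0$ before invoking Hensel.
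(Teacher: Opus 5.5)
Your proof is correct and follows the paper's argument essentially step for step. In (a) you apply Lemma~\ref{lem:hensel1} to $X^2-u$ at $\pm v$; in (b) you use Lemma~\ref{lem:cubic} to get $\rho\ne0$ and $3\rho^2+A\equiv0$, then Lemma~\ref{lem:hensel1} for $3X^2+A$, and finish with the final clause of Lemma~\ref{lem:shift} — your additional remarks (distinctness of the classes $\pm v$, and the case $\Delta=0$) are harmless refinements the paper leaves implicit.
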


\begin{proof}
(a) If $u=v^2$ in $\Zp$ then $u$ is a square modulo $p$, so $\chi(u)=1$.
Conversely, if $\chi(u)=1$ pick $v$ with $v^2\equiv u\pmod p$; then
$v\not\equiv0$, so $g(X)=X^2-u$ has $g'(v)=2v\not\equiv0\pmod p$, and
Lemma \ref{lem:hensel1} gives a unique root in each of the classes $v$ and
$-v$; a square root of $u$ is a root of $g$, hence reduces to $\pm v$.
(b) By Lemma \ref{lem:cubic}(ii),(iii) applied to $f$ modulo $p$,
$\rho\ne0$ and $3\rho^2+A\equiv0$. For $g(X)=3X^2+A$ we have
$g'(\rho)=6\rho\not\equiv0\pmod p$, so Lemma \ref{lem:hensel1} applies;
the root $\tilde\rho$ is a unit because $\rho\ne0$. Finally
$f(\tilde\rho)\equiv f(\rho)\equiv0\pmod p$, so Lemma \ref{lem:shift}
gives $\vp(f(\tilde\rho))=\vp(\Delta)$.
\end{proof}

\begin{lemma}[Hensel, counting form]\label{lem:hensel}
Let $H\in\Zp[x,y]$, $j\ge1$, and let $(x_0,y_0)$ be a solution of
$H\equiv0\pmod{p^j}$ at which
$\bigl(\partial H/\partial x,\;\partial H/\partial y\bigr)(x_0,y_0)
\not\equiv(0,0)\pmod p$. Then among the $p^2$ pairs
$(x_0+p^js,\;y_0+p^jt)$ with $0\le s,t<p$, exactly $p$ are solutions of
$H\equiv0\pmod{p^{j+1}}$. Consequently, a solution modulo $p$ at which the
gradient of $H$ does not vanish modulo $p$ has exactly $p^{j-1}$ lifts to
solutions modulo $p^j$, for every $j\ge1$.
\end{lemma}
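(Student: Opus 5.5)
The plan is to use the first-order Taylor expansion of $H$ at $(x_0,y_0)$, exactly as in the proof of Lemma~\ref{lem:hensel1}, and then iterate. Write $H(x_0,y_0)=p^jc$ with $c\in\Zp$. Since $H$ has coefficients in $\Zp$, expanding $H(x_0+p^js,\,y_0+p^jt)$ gives
\[
H(x_0+p^js,\,y_0+p^jt)\equiv p^jc+p^js\,H_x(x_0,y_0)+p^jt\,H_y(x_0,y_0)\pmod{p^{2j}},
\]
where $H_x,H_y$ denote the partial derivatives. This holds because every higher-order term of the Taylor expansion is a $\Zp$-combination of monomials $s^at^b$ with $a+b\ge2$, multiplied by $p^{(a+b)j}$. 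The coefficients are the divided derivatives $\frac{1}{a!b!}\partial^{a+b}H$, which are integral for polynomials, so no division by $p$ occurs. Since $2j\ge j+1$, the condition $H\equiv0\pmod{p^{j+1}}$ at the shifted point is equivalent to the linear congruence
\[
c+s\,\alpha+t\,\beta\equiv0\pmod p,\qquad \alpha=H_x(x_0,y_0),\ \beta=H_y(x_0,y_0).
\]

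Next I count the pairs $(s,t)\in\Fp^2$ solving this congruence. By hypothesis $(\alpha,\beta)\not\equiv(0,0)\pmod p$. If $\beta\not\equiv0$, then for each of the $p$ values of $s$ there is exactly one $t$; if $\beta\equiv0$, then $\alpha\not\equiv0$ and symmetrically each $t$ determines a unique $s$. In either case the solution set is an affine line in $\Fp^2$ and has exactly $p$ elements. This proves the first assertion.

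For the consequence I argue by induction on $j$, with the statement: a solution $(x_0,y_0)$ modulo $p$ with nonvanishing gradient has exactly $p^{j-1}$ lifts modulo $p^j$. The case $j=1$ is trivial. For the inductive step, each lift modulo $p^{j+1}$ reduces modulo $p^j$ to a lift modulo $p^j$, and every lift modulo $p^{j+1}$ of a given $(x_j,y_j)$ has the form $(x_j+p^js,\,y_j+p^jt)$. The gradient at $(x_j,y_j)$ is congruent modulo $p$ to the gradient at $(x_0,y_0)$, so it is still nonzero modulo $p$, and the first part applies. Each of the $p^{j-1}$ lifts therefore has exactly $p$ lifts, giving $p^j$ lifts modulo $p^{j+1}$.

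The only delicate point is the precise meaning of "solution modulo $p^j$" as a residue class: the residues $x_0+p^js$ with $0\le s<p$ must be taken modulo $p^{j+1}$, and the count must not depend on the chosen representative $(x_0,y_0)$ of the class modulo $p^j$. This is where I expect care to be needed. It is handled by noting that changing the representative by a multiple of $p^j$ merely permutes the $p^2$ candidate pairs, and the truth of $H\equiv0\pmod{p^{j+1}}$ depends only on the classes modulo $p^{j+1}$.
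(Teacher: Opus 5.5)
Your proposal is correct and follows the paper's proof essentially step for step. The first-order Taylor expansion modulo $p^{2j}$ reduces the lifting condition to a linear congruence in $(s,t)\in\Fp^2$ with nonzero coefficient vector, hence exactly $p$ solutions; induction on $j$, using that the gradient modulo $p$ depends only on the residue modulo $p$, then gives the $p^{j-1}$ lifts. Your remarks on the integrality of the divided derivatives and on independence of the chosen representative only make explicit what the paper leaves implicit.
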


\begin{proof}
Taylor's formula in two variables gives
\[
  H(x_0+p^js,\;y_0+p^jt)\;\equiv\;H(x_0,y_0)\;+\;p^j\Bigl(
  \tfrac{\partial H}{\partial x}(x_0,y_0)\,s+\tfrac{\partial H}{\partial y}(x_0,y_0)\,t\Bigr)
  \pmod{p^{2j}},
\]
because all further terms carry a factor $p^{2j}$ and $p$-adic integer
coefficients; and $2j\ge j+1$. Writing $H(x_0,y_0)=p^jc$, the condition
modulo $p^{j+1}$ is the linear equation
$c+\tfrac{\partial H}{\partial x}(x_0,y_0)\,s+\tfrac{\partial H}{\partial y}(x_0,y_0)\,t\equiv0\pmod p$
in $(s,t)\in\Fp^2$. Its coefficient vector is nonzero, so it has exactly
$p$ solutions. For the consequence, induct on $j$: the gradient of $H$ at
a lift is congruent modulo $p$ to the gradient at $(x_0,y_0)$, since it
depends only on the residues modulo $p$; so each of the $p^{j-1}$ lifts
modulo $p^j$ has exactly $p$ lifts modulo $p^{j+1}$, and lifts of
distinct residues are distinct.
\end{proof}

For a plane curve modulo $p^k$, Hensel's lemma determines the number of
lifts of each nonsingular point modulo $p$. The remaining contribution
comes from the singular points, as follows.

\begin{corollary}[structure of the count modulo $p^k$]\label{cor:structure}
Let $A,B\in\Zp$, $f(x)=x^3+Ax+B$, $F(x,y)=y^2-f(x)$, and for $k\ge1$ let
$N_k$ be the number of solutions of $F\equiv0\pmod{p^k}$ in
$(\Z/p^k\Z)^2$. Let $S\subseteq\Fp^2$ be the set of singular points of the
reduced curve, that is, of the solutions $(x,y)$ modulo $p$ of $F\equiv0$
with $f'(x)\equiv0$ and $2y\equiv0$. For $P\in S$ let $M_k(P)$ be the
number of solutions modulo $p^k$ that reduce to $P$ modulo $p$. Then
\[
  N_k\;=\;p^{k-1}\bigl(N_1-|S|\bigr)\;+\;\sum_{P\in S}M_k(P)
  \qquad(k\ge1).
\]
Moreover $S=\emptyset$ if $p\nmid\Delta$, and $S=\{(\rho,0)\}$ with
$\rho$ the repeated root of $f$ modulo $p$ if $p\mid\Delta$. Hence
\[
  p\nmid\Delta:\quad N_k=p^{k-1}N_1;
  \qquad\qquad
  p\mid\Delta:\quad N_k=p^{k-1}(N_1-1)+M_k,\ \ M_1=1,
\]
where $M_k=M_k((\rho,0))$.
\end{corollary}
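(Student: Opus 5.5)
The plan is to partition the solutions modulo $p^k$ by their reduction modulo $p$, and then use the counting form of Hensel's lemma (Lemma~\ref{lem:hensel}) on the nonsingular residues. The reduction map $(\Z/p^k\Z)^2\to\Fp^2$ sends solutions of $F\equiv0\pmod{p^k}$ to solutions modulo $p$. Hence
\[
N_k=\sum_{P}M_k(P),
\]
where $P$ runs over the $N_1$ solutions modulo $p$ and $M_k(P)$ counts the solutions modulo $p^k$ lying above $P$.

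\textbf{Nonsingular residues.} Take $P=(x_0,y_0)\notin S$. The gradient of $F$ is $(-f'(x),2y)$, and by the definition of $S$ it does not vanish modulo $p$ at $P$. Lemma~\ref{lem:hensel} then gives $M_k(P)=p^{k-1}$. Summing over the $N_1-|S|$ nonsingular residues and adding the singular contributions gives the displayed formula. This step is immediate.

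\textbf{Description of $S$.} Since $p\ge5$, $2$ is a unit, so $2y\equiv0$ forces $y\equiv0$. Then $F\equiv0$ becomes $f(x)\equiv0$, and together with $f'(x)\equiv0$ this says that $x$ is a repeated root of $f\bmod p$. By Lemma~\ref{lem:cubic}(i), applied with $a=\bar A$, $b=\bar B$ and $D\equiv\Delta\pmod p$, such a root exists if and only if $p\mid\Delta$.
\begin{itemize}
\item If $p\nmid\Delta$, then $S=\emptyset$ and $N_k=p^{k-1}N_1$.
\item If $p\mid\Delta$, Lemma~\ref{lem:cubic}(ii) says the repeated root $\rho$ is unique and lies in $\Fp$, so $S=\{(\rho,0)\}$.
\end{itemize}
Finally, $M_1=1$ holds trivially, because the only solution modulo $p$ reducing to $P$ is $P$ itself.

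\textbf{Main obstacle.} Nothing here is deep. The only care needed is in identifying $S$ exactly: we must use that $2$ is invertible, and that over $\Fp$ a repeated root of the cubic is characterized by $D=0$ and is unique and rational. Both facts are supplied by Lemma~\ref{lem:cubic}, which relies on $p\ge5$.
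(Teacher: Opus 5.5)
Your proposal is correct and follows the paper's proof essentially step for step. It groups solutions by their reduction modulo $p$, applies Lemma~\ref{lem:hensel} at the non-singular residues, and uses the invertibility of $2$ together with Lemma~\ref{lem:cubic}(i),(ii) to identify $S$, with $M_1=1$ trivially.
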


\begin{proof}
Every solution modulo $p^k$ reduces to a solution modulo $p$; group the
solutions by their reduction. The gradient of $F$ is $(-f'(x),2y)$. At a
solution $P\notin S$ it does not vanish modulo $p$, so $P$ has exactly
$p^{k-1}$ lifts by Lemma \ref{lem:hensel}; there are $N_1-|S|$ such $P$.
This gives the displayed formula. A point of $S$ has $2y\equiv0$, hence
$y\equiv0$ as $p$ is odd, then $F\equiv0$ gives $f(x)\equiv0$, and
together with $f'(x)\equiv0$ this says that $x$ is a repeated root of $f$
modulo $p$. By Lemma \ref{lem:cubic}(i),(ii) such a root exists if and
only if $p\mid\Delta$, and then it is unique. Finally $M_1=1$ because
$(\rho,0)$ is its own unique lift modulo $p$.
\end{proof}

\begin{definition}[reduction types]\label{def:reduction}
Let $p\ge5$, $A,B\in\Zp$ and $\Delta=4A^3+27B^2$. The equation
$y^2=x^3+Ax+B$ has at $p$
\begin{itemize}
\item[--] {\em good reduction} if $p\nmid\Delta$: the reduced cubic is
nonsingular and defines an elliptic curve over $\Fp$;
\item[--] {\em bad reduction} if $p\mid\Delta$. Then, by Lemma
\ref{lem:cubic}, the reduced cubic has a unique repeated root $\rho\in\Fp$
and the reduced curve has the unique singular point $(\rho,0)$. Two cases:
\begin{itemize}
\item[--] a {\em node} (multiplicative reduction) if $p\nmid A$. Then
$\rho=-3B/(2A)\not\equiv0$ is a double root, and, by Lemma \ref{lem:shift}
read modulo $p$, the reduced curve near $(\rho,0)$ is
$y^2=3\rho\,u^2+u^3$ with $u=x-\rho$. Its tangent cone $y^2=3\rho\,u^2$
consists of two distinct lines, which are defined over $\Fp$ if and only
if $3\rho$ is a square. Put
\[
  \eta\;=\;\chi(-2AB)\;=\;\chi(3\rho)\in\{\pm1\}
\]
(Lemma \ref{lem:cubic}(iv)). The node is {\em split} if $\eta=1$ and
{\em non-split} if $\eta=-1$;
\item[--] a {\em cusp} (additive reduction) if $p\mid A$, and then
$p\mid B$ by Lemma \ref{lem:cubic}(iii). Then $\rho=0$, the reduced
equation is $y^2=x^3$, and the tangent cone at the origin is the double
line $y^2=0$. The nonzero points of $y^2=x^3$ over $\Fp$ are exactly
$(t^2,t^3)$ with $t\in\Fp^{\times}$, since such a point has $t=y/x$; there
are $p-1$ of them.
\end{itemize}
\end{itemize}
\end{definition}

\begin{remark}[reduction types: counts, terminology, minimality]\label{rem:reduction}
(1) The counts modulo $p$ are: $N_1=p-a_p$ with $|a_p|\le2\sqrt p$ for
good reduction (Hasse), $N_1=p-\eta$ for a node and $N_1=p$ for a cusp
(Lemma \ref{lem:cubic}(iv)); the last two cases are the second case of
Lemma \ref{lem:bound}. In the language of \cite[III.2.5, VII.5]{Sil09},
the nonsingular projective points form a group of order $p+1-a_p$, $p-1$,
$p+1$ and $p$ in the four cases good, split node, non-split node and cusp.

(2) In \cite[VII.5]{Sil09} the terms good, multiplicative and additive
reduction refer to an elliptic curve over $\mathbb{Q}_p$ and are defined
through a {\em minimal} Weierstra\ss{} equation. Here the classification
applies to the given equation, which need not be minimal, and $\Delta=0$
is allowed. For $p\ge5$, the changes of variables between two short
Weierstra\ss{} equations of the same curve are $x=u^2x'$, $y=u^3y'$,
which replace $(A,B)$ by $(u^{-4}A,u^{-6}B)$ \cite[III.3.1]{Sil09}. With
$u=p$ the equation $y^2=x^3+Ax+B$ becomes $y'^2=x'^3+(A/p^4)x'+B/p^6$, and
$\Delta$ is divided by $p^{12}$. Hence the equation is minimal at $p$,
i.e. no such substitution keeps the coefficients $p$-integral, if and only
if $\vp(A)<4$ or $\vp(B)<6$; equivalently, if and only if $\vp(\Delta)<12$
or $\vp(A)<4$. Indeed, if $\vp(A)\ge4$ and $\vp(B)\ge6$ then
$\vp(4A^3)\ge12$ and $\vp(27B^2)\ge12$, so $\vp(\Delta)\ge12$; conversely,
if $\vp(A)\ge4$ and $\vp(\Delta)\ge12$ then
$\vp(27B^2)=\vp(\Delta-4A^3)\ge12$, so $\vp(B)\ge6$. A nonminimal
equation has $p\mid A,B$, hence cuspidal reduction in the sense of
Definition \ref{def:reduction}, whatever the reduction type of the curve
it defines; this is the source of row 8 of Theorem \ref{thm:cusp} and of
the scaling depth $t$ in Theorem \ref{thm:compression}.

\end{remark}

{\bf Descent.} Above a singular point, the equation forces divisibility
of the coordinates by powers of $p$. Substituting $x=p^aX$, $y=p^bY$
and dividing by $p^c$ gives an equation modulo $p^{k-c}$. The next lemma
counts the choices of higher digits in $X$ and $Y$ that remain free.

\begin{lemma}[descent]\label{lem:descent}
Let $H\in\Zp[x,y]$, $k\ge1$, and let $a,b,c$ be integers with
$0\le a,b\le c\le k$ such that $H(p^aX,p^bY)=p^c\,H_1(X,Y)$ with
$H_1\in\Zp[X,Y]$. Then
\begin{multline*}
  \#\{(x,y)\bmod p^k:\;H(x,y)\equiv0\pmod{p^k},\ p^a\mid x,\ p^b\mid y\}\\
  =\;p^{\,2c-a-b}\cdot
  \#\{(X,Y)\bmod p^{k-c}:\;H_1(X,Y)\equiv0\pmod{p^{k-c}}\},
\end{multline*}
where a count modulo $p^0$ equals $1$. If the congruence
$H\equiv0\pmod{p^k}$ forces $p^a\mid x$ and $p^b\mid y$, the left-hand
side is its full number of solutions.
\end{lemma}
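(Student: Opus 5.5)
The plan is to establish a bijection between the two solution sets and then count the fibres. Write $S$ for the left-hand set: pairs $(x,y)$ modulo $p^k$ with $H(x,y)\equiv0\pmod{p^k}$, $p^a\mid x$ and $p^b\mid y$. By the remark closing the $p$-adic notation paragraph, these residues are exactly $x=p^aX$ with $X$ modulo $p^{k-a}$, and $y=p^bY$ with $Y$ modulo $p^{k-b}$. Each such pair $(x,y)$ corresponds to exactly one pair $(X \bmod p^{k-a},\,Y\bmod p^{k-b})$. For these pairs,
\[
H(x,y)=H(p^aX,p^bY)=p^c H_1(X,Y),
\]
so $H(x,y)\equiv0\pmod{p^k}$ holds if and only if $p^{k-c}\mid H_1(X,Y)$. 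This step uses $c\le k$ and the fact that $p^c$ is exactly the factored power, with $H_1$ having $p$-adic integer coefficients.

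Next I check well-definedness: whether $p^{k-c}\mid H_1(X,Y)$ depends only on $X\bmod p^{k-c}$ and $Y\bmod p^{k-c}$. This holds because $H_1\in\Zp[X,Y]$, and because $a,b\le c$ gives $k-c\le k-a$ and $k-c\le k-b$, so reduction modulo $p^{k-c}$ is defined on the classes of $X$ and $Y$. Thus $S$ is in bijection with the set of pairs $(X\bmod p^{k-a},\,Y\bmod p^{k-b})$ whose reductions modulo $p^{k-c}$ solve $H_1\equiv0\pmod{p^{k-c}}$.

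To count fibres, note that each residue modulo $p^{k-c}$ has exactly $p^{(k-a)-(k-c)}=p^{c-a}$ lifts modulo $p^{k-a}$, and likewise $p^{c-b}$ lifts modulo $p^{k-b}$. Multiplying gives the factor $p^{2c-a-b}$. The degenerate case $c=k$ fits the same argument: every pair is a solution modulo $p^0$, the count there is $1$, and the left side equals the number of all admissible pairs, namely $p^{k-a}p^{k-b}=p^{2c-a-b}$.

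The final sentence of the statement is immediate. If every solution satisfies $p^a\mid x$ and $p^b\mid y$, then the divisibility conditions do not restrict the solution set, so $S$ is the full set of solutions. The only delicate point in the whole argument is the well-definedness step, which rests on the ordering $k-c\le k-a,\,k-b$ and on the integrality of $H_1$; no genuine obstacle is expected.
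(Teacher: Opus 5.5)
Your proof is correct and follows the paper's argument essentially step by step. You parametrize by $X\bmod p^{k-a}$ and $Y\bmod p^{k-b}$, reduce the condition to $H_1(X,Y)\equiv0\pmod{p^{k-c}}$, note that this depends only on residues modulo $p^{k-c}$ because $c\ge a,b$, count the $p^{c-a}\cdot p^{c-b}$ lifts, and treat $c=k$ separately. One small point: the equivalence $p^cH_1\equiv0\pmod{p^k}\iff H_1\equiv0\pmod{p^{k-c}}$ needs only $c\le k$ and the absence of zero divisors in $\Zp$. It does not need $p^c$ to be the exact power of $p$ dividing $H(p^aX,p^bY)$.
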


\begin{proof}
The residues $x$ modulo $p^k$ with $p^a\mid x$ are $x=p^aX$ with $X$
modulo $p^{k-a}$, and likewise $y=p^bY$ with $Y$ modulo $p^{k-b}$. For
these, $H(x,y)\equiv0\pmod{p^k}$ if and only if
$p^cH_1(X,Y)\equiv0\pmod{p^k}$, if and only if
$H_1(X,Y)\equiv0\pmod{p^{k-c}}$. The last condition depends only on $X$
and $Y$ modulo $p^{k-c}$, and $p^{k-c}$ divides $p^{k-a}$ and $p^{k-b}$
because $c\ge a,b$. Every pair $(\bar X,\bar Y)$ modulo $p^{k-c}$
therefore corresponds to $p^{(k-a)-(k-c)}\cdot p^{(k-b)-(k-c)}=p^{2c-a-b}$
pairs $(X\bmod p^{k-a},\,Y\bmod p^{k-b})$, and the pairs $(x,y)$ counted
on the left are exactly those above the solutions of
$H_1\equiv0\pmod{p^{k-c}}$. For $k=c$ the condition is empty, and the
count is $p^{2c-a-b}$.
\end{proof}

When the solutions above a fixed point $(x_0,y_0)$ modulo $p$ are counted,
the lemma is applied after the translation $x\mapsto x_0+x$,
$y\mapsto y_0+y$, with $a=b=1$ at least; further divisibilities are
forced by reducing the equation modulo small powers of $p$. The same
argument with one variable shows: if $h(p^aY)=p^ch_1(Y)$ with
$a\le c\le k$, then the number of $y$ modulo $p^k$ with $p^a\mid y$ and
$h(y)\equiv0\pmod{p^k}$ equals $p^{\,c-a}$ times the number of solutions
of $h_1\equiv0\pmod{p^{k-c}}$. Lemma
\ref{lem:descent} is used in the proof of Theorem \ref{thm:node} with
$(a,b,c)=(1,1,2)$, and in the proof of Theorem \ref{thm:cusp} with
$(a,b,c)=(1,1,2)$, $(1,2,3)$, $(2,2,4)$, $(2,3,5)$ and $(2,3,6)$.

{\bf Binary quadratic forms.} The counts above a node reduce to
$Y^2-dU^2$ with $d$ a unit. Above a double root of the residual cubic
in Section~\ref{sec:cusp}, the form is $y^2-pdz^2$. The first form is
split when $d$ is a square and is the norm form of an unramified
quadratic extension of $\mathbb Q_p$ when $d$ is a nonsquare unit. The second is the norm
form of a ramified quadratic extension. The following lemmas give the
finite-field counts and the valuation properties used in both cases.

\begin{lemma}[conics over $\Fp$]\label{lem:conic}
Let $d\in\Fp^{\times}$ and $c\in\Fp$. Then
\[
  \#\{(U,Y)\in\Fp^2:\;Y^2-dU^2=c\}\;=\;
  \begin{cases}
    p-\chi(d), & c\ne0,\\
    1+(p-1)\bigl(1+\chi(d)\bigr), & c=0.
  \end{cases}
\]
\end{lemma}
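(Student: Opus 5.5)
The plan is to count, for each fixed $U$, the solutions in $Y$ of $Y^2 = c + dU^2$, and sum over $U$. This is exactly the device behind \eqref{eq:count}: the number of $Y\in\Fp$ with $Y^2=u$ is $1+\chi(u)$. Summing over $U$ gives
\[
  \#\{(U,Y):Y^2-dU^2=c\} \;=\; p+\sum_{U\in\Fp}\chi(c+dU^2).
\]
The whole task is then to evaluate the character sum $\Sigma(c)=\sum_{U}\chi(c+dU^2)$ in the two cases.

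\textbf{Case $c=0$.} Here $\chi(dU^2)=\chi(d)\chi(U)^2$, which equals $\chi(d)$ for $U\ne0$ and $0$ for $U=0$. So $\Sigma(0)=(p-1)\chi(d)$, and the count is $p+(p-1)\chi(d)=1+(p-1)(1+\chi(d))$, as claimed.

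\textbf{Case $c\ne0$.} This is the main step. I would first reduce to a linear sum: the number of $U$ with $U^2=v$ is $1+\chi(v)$, so
\[
  \Sigma(c)=\sum_{v\in\Fp}(1+\chi(v))\chi(c+dv)=\sum_{v}\chi(c+dv)+\sum_v\chi(v)\chi(c+dv).
\]
The first sum vanishes, because $v\mapsto c+dv$ permutes $\Fp$ and $\chi$ takes $(p-1)/2$ values $+1$ and $(p-1)/2$ values $-1$, as in the proof of Lemma~\ref{lem:cubic}(iv).

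For the second sum, the term at $v=0$ is zero. For $v\ne0$, write $\chi(v)\chi(c+dv)=\chi(v^2)\chi(cv^{-1}+d)=\chi(cv^{-1}+d)$. As $v$ runs over $\Fp^\times$, the quantity $cv^{-1}+d$ runs over $\Fp\setminus\{d\}$, since $c\ne0$. Hence the second sum is $\sum_{w\in\Fp}\chi(w)-\chi(d)=-\chi(d)$. This gives $\Sigma(c)=-\chi(d)$, and the count is $p-\chi(d)$.

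An alternative for the split case $\chi(d)=1$ is to write $d=\delta^2$ and factor $(Y-\delta U)(Y+\delta U)=c$, which gives $p-1$ solutions directly. The character-sum route handles both signs uniformly, so I would use it. The only point needing care is the reparametrization $v\mapsto cv^{-1}+d$ and its excluded value $d$, which uses $c\ne0$; no earlier result beyond Euler's criterion and the square-count identity is needed.
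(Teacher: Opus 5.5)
Your proof is correct, and for $c\ne0$ it takes a genuinely different route from the paper. For $c=0$ you and the paper argue the same way: fix $U$ and count $Y$. For $c\ne0$ the paper splits on the sign of $\chi(d)$:
\begin{itemize}
\item when $d=\delta^2$, it uses exactly the factorization you mention as an alternative, turning the conic into the hyperbola $st=c$ with $p-1$ points;
\item when $\chi(d)=-1$, it identifies $Y^2-dU^2$ with the norm $z\mapsto z^{p+1}$ from $\Fp(\sqrt d)=\mathbb{F}_{p^2}$ to $\Fp$. It shows the norm is surjective by bounding its kernel by $p+1$, so every fibre has $p+1$ elements.
\end{itemize}
You instead evaluate $\sum_U\chi(c+dU^2)$ directly. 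You reduce it to the Jacobi-type sum $\sum_v\chi(v)\chi(c+dv)=-\chi(d)$ through the substitution $w=cv^{-1}+d$, which misses exactly the value $d$.

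Your route treats both signs of $\chi(d)$ at once. It needs no quadratic extension field and no norm surjectivity, only the multiplicativity and balance of $\chi$ already used in \eqref{eq:count} and Lemma~\ref{lem:cubic}(iv). The paper's route gives structure rather than brevity. The linear change of variables in the split case and the norm interpretation in the non-split case are what get lifted to $\Zp$ in Lemma~\ref{lem:normform}(a),(b). They are then used in the second proof of Theorem~\ref{thm:node}. So the paper's proof of this lemma already shows the split/non-split dichotomy of the node.
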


\begin{proof}
Let $c=0$. If $U=0$ then $Y=0$. For each of the $p-1$ values $U\ne0$ the
equation $Y^2=dU^2$ has $1+\chi(d)$ solutions $Y$. This gives the second
formula.

Let $c\ne0$ and $\chi(d)=1$, say $d=\delta^2$ with $\delta\ne0$. Then
$Y^2-dU^2=(Y-\delta U)(Y+\delta U)$, and the linear map
$(U,Y)\mapsto(Y-\delta U,\,Y+\delta U)$ is a bijection of $\Fp^2$, its
determinant $2\delta$ being nonzero. The equation $st=c$ has exactly $p-1$
solutions: $s\ne0$ arbitrary and $t=c/s$. So the count is $p-1=p-\chi(d)$.

Let $c\ne0$ and $\chi(d)=-1$. Then $\Fp(\sqrt d)$ is the field with $p^2$
elements, and $Y^2-dU^2=\mathrm{N}(Y+U\sqrt d)$, where
$\mathrm{N}(z)=z\cdot z^p=z^{p+1}$ is the norm to $\Fp$. The norm is a
homomorphism from the cyclic group $\Fp(\sqrt d)^{\times}$ of order
$p^2-1$ to $\Fp^{\times}$. Its kernel consists of roots of $z^{p+1}=1$, so
it has at most $p+1$ elements; hence the image has at least
$(p^2-1)/(p+1)=p-1$ elements, that is, the norm is surjective and every
fiber has exactly $p+1$ elements. So the count is $p+1=p-\chi(d)$.
\end{proof}

\begin{lemma}[norm forms over $\Zp$]\label{lem:normform}
Let $d\in\Zp^{\times}$.
\begin{itemize}
\item[(a)] If $\chi(d)=1$, let $\delta\in\Zp^{\times}$ with $\delta^2=d$
(Corollary \ref{cor:hensel1}(a)). The map
$(y,w)\mapsto(y-\delta w,\;y+\delta w)$ is a bijection of $\Zp^2$ which
induces a bijection of $(\Z/p^n\Z)^2$ for every $n\ge1$, and it transforms
$y^2-dw^2$ into the product of the two coordinates.
\item[(b)] If $\chi(d)=-1$, then $\vp(y^2-dw^2)=2\min(\vp(y),\vp(w))$
for all $y,w\in\Zp$.
\item[(c)] $\vp(y^2-pdz^2)=\min\bigl(2\vp(y),\,1+2\vp(z)\bigr)$ for all
$y,z\in\Zp$.
\end{itemize}
\end{lemma}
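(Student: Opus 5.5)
The three parts are proved independently, each by a short argument with the valuation $\vp$ and the results of Subsection~\ref{sec:facts-zpk}.

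For (a), $\delta$ is a unit and $2$ is a unit because $p\ge5$. The map $(y,w)\mapsto(y-\delta w,\,y+\delta w)$ is $\Zp$-linear with matrix $\begin{pmatrix}1&-\delta\\1&\delta\end{pmatrix}$. Its determinant $2\delta$ is a unit, so the inverse matrix also has entries in $\Zp$. This gives a bijection of $\Zp^2$. Both the matrix and its inverse map $p^n\Zp^2$ into itself, so the map descends to a bijection of $(\Z/p^n\Z)^2$ for every $n\ge1$. The identity $(y-\delta w)(y+\delta w)=y^2-\delta^2w^2=y^2-dw^2$ finishes (a).

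For (b), I would first dispose of the trivial case $y=w=0$, where both sides are $\infty$. Otherwise put $m=\min(\vp(y),\vp(w))$ and write $y=p^my_1$, $w=p^mw_1$ with $y_1,w_1\in\Zp$ not both divisible by $p$. Then $y^2-dw^2=p^{2m}(y_1^2-dw_1^2)$, so it suffices to show that $y_1^2-dw_1^2$ is a unit. Suppose instead that $p$ divides it. If $p\mid w_1$, then $p\mid y_1$ as well, contradicting the choice of $m$. So $w_1$ is a unit, and then $d\equiv(y_1/w_1)^2\pmod p$. This makes $\chi(d)=1$, contradicting $\chi(d)=-1$. This residue-field argument is the only place where the hypothesis $\chi(d)=-1$ enters.

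For (c), I would compare the valuations of the two summands. We have $\vp(y^2)=2\vp(y)$, which is even. Since $d$ is a unit, $\vp(pdz^2)=1+2\vp(z)$, which is odd. The two valuations are therefore never equal, unless both terms are $0$, in which case both sides are $\infty$. The strict ultrametric equality $\vp(x+y)=\min(\vp(x),\vp(y))$ when $\vp(x)\ne\vp(y)$, recalled in the $p$-adic notation paragraph, then gives the formula directly.

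None of the steps is a real obstacle. The only point needing care is the primitive reduction in (b), including the degenerate zero cases in (b) and (c).
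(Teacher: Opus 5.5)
Your proposal is correct and follows essentially the same route as the paper's proof: a unit-determinant linear change of variables for (a), reduction to primitive $(y_1,w_1)$ followed by the residue-field argument for (b), and the even/odd valuation comparison for (c). Your explicit treatment of the degenerate case $y=z=0$ in (c) is slightly more careful than the paper, which leaves that case implicit.
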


\begin{proof}
(a) The map is $\Zp$-linear with determinant $2\delta$, a unit, so its
inverse $(s,t)\mapsto\bigl((s+t)/2,\,(t-s)/(2\delta)\bigr)$ also has
coefficients in $\Zp$; both maps induce maps on $(\Z/p^n\Z)^2$, inverse to
each other. The product of the coordinates is
$(y-\delta w)(y+\delta w)=y^2-dw^2$.

(b) If $y=w=0$ both sides are $\infty$. Otherwise let
$t=\min(\vp(y),\vp(w))$ and write $y=p^ty_1$, $w=p^tw_1$, where $y_1,w_1$
are not both divisible by $p$. Then $y^2-dw^2=p^{2t}(y_1^2-dw_1^2)$, and
$y_1^2-dw_1^2\not\equiv0\pmod p$: if $p\nmid w_1$, a congruence
$y_1^2\equiv dw_1^2$ would make $d\equiv(y_1/w_1)^2$ a square, contrary to
$\chi(d)=-1$; if $p\mid w_1$, then $p\nmid y_1$ and
$y_1^2-dw_1^2\equiv y_1^2\not\equiv0$.

(c) The valuations $\vp(y^2)=2\vp(y)$ and $\vp(pdz^2)=1+2\vp(z)$ are
even and odd respectively, so they are different, and the valuation of a
difference of two elements of different valuations is the smaller one.
\end{proof}

\begin{lemma}[square roots modulo $p^k$]\label{lem:sqrt}
Let $k\ge1$ and $c\in\Z$. The number of $y$ modulo $p^k$ with
$y^2\equiv c\pmod{p^k}$ is
\[
  \begin{cases}
    p^{\lfloor k/2\rfloor}, & p^k\mid c,\\
    0, & \nu=\vp(c)<k \text{ odd},\\
    p^{\nu/2}\bigl(1+\chi(c/p^{\nu})\bigr), & \nu=\vp(c)<k \text{ even}.
  \end{cases}
\]
\end{lemma}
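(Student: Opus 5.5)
The plan is to split into the three cases of the statement according to $\nu=\vp(c)$, where $c$ is read as an element of $\Zp$ (or just reduced modulo $p^k$). When $p^k\mid c$ I will count the $y$ with $p^k\mid y^2$. This condition is equivalent to $2\vp(y)\ge k$, that is $\vp(y)\ge\lceil k/2\rceil$. By the counting remark at the start of Subsection~\ref{sec:facts-zpk}, the residues modulo $p^k$ divisible by $p^{\lceil k/2\rceil}$ number $p^{k-\lceil k/2\rceil}=p^{\lfloor k/2\rfloor}$. This settles the first case.

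Now let $\nu<k$. Any solution satisfies $y^2\equiv c\pmod{p^k}$ with $\nu<k$, so $\vp(y^2)=\nu$. This holds whether we compute $\vp(y^2)$ with $y$ a lift in $\Z$ or note that the congruence determines valuations below $k$. If $\nu$ is odd, $2\vp(y)=\nu$ is impossible, so there are no solutions. If $\nu$ is even, write $\nu=2s$. Every solution then has the form $y=p^sY$ with $Y$ modulo $p^{k-s}$, and $p\nmid Y$. Write $c=p^{2s}c_1$ with $c_1$ a unit. I will apply the one-variable version of Lemma~\ref{lem:descent} with $h(y)=y^2-c$, $a=s$ and exponent $2s$, so $h(p^sY)=p^{2s}(Y^2-c_1)$. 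This requires $s\le 2s\le k$, which holds. The lemma then gives that the count equals $p^{2s-s}=p^{s}$ times the number of $Y$ modulo $p^{k-2s}$ with $Y^2\equiv c_1\pmod{p^{k-2s}}$. Here $k-2s\ge1$.

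It remains to count the square roots of a unit $c_1$ modulo $p^j$ with $j\ge1$. Modulo $p$, Euler's criterion (Lemma~\ref{lem:eulerfermat}) together with the two-to-one squaring map gives exactly $1+\chi(c_1)$ roots, and these are nonzero. Each such root $v$ is a simple root of $g(X)=X^2-c_1$, since $g'(v)=2v\not\equiv0$ because $p$ is odd. Hence Lemma~\ref{lem:hensel1} shows that each root lifts uniquely modulo $p^j$. So there are exactly $1+\chi(c_1)$ solutions modulo $p^j$, and $\chi(c_1)=\chi(c/p^\nu)$. Combining this with the factor $p^{s}=p^{\nu/2}$ from the descent step gives the third case.

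The only delicate point is the bookkeeping in the descent step. One must check that the solutions are forced to be divisible by $p^s$, so that the restricted count of Lemma~\ref{lem:descent} is the full count. One must also check that $k-2s\ge1$, so that Hensel's lemma applies and the degenerate count ``modulo $p^0$'' does not occur.
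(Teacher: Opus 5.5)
Your proof is correct and follows the paper's argument step for step. The steps match: the count $p^{k-\lceil k/2\rceil}$ when $p^k\mid c$, the parity obstruction from $2\vp(y)=\nu$, the one-variable form of Lemma~\ref{lem:descent} with $(a,c)=(\nu/2,\nu)$, and Lemma~\ref{lem:hensel1} applied to $X^2-c/p^{\nu}$, which gives $1+\chi(c/p^\nu)$ roots. Your explicit checks that every solution is divisible by $p^{\nu/2}$ and that $k-\nu\ge1$ are the bookkeeping the paper uses implicitly.
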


\begin{proof}
If $p^k\mid c$, the condition is $y^2\equiv0\pmod{p^k}$, that is
$2\vp(y)\ge k$, that is $p^{\lceil k/2\rceil}\mid y$; there are
$p^{k-\lceil k/2\rceil}=p^{\lfloor k/2\rfloor}$ such residues. Let
$\nu=\vp(c)<k$. For a solution $y$ we have $\vp(y^2-c)\ge k>\nu$, so
$\vp(y^2)=\vp(c)=\nu$, i.e. $2\vp(y)=\nu$. If $\nu$ is odd there is no
solution. If $\nu=2t$, then $\vp(y)=t$, and with $y=p^ty_1$, $c=p^{2t}c_0$,
$c_0$ a unit, the condition is $y_1^2\equiv c_0\pmod{p^{k-2t}}$. By Lemma
\ref{lem:hensel1}, applied to $X^2-c_0$ whose derivative is a unit at
every unit, the solutions $y_1$ modulo $p^{k-2t}$ correspond bijectively
to the solutions modulo $p$, of which there are $1+\chi(c_0)$. By the
one-variable form of Lemma \ref{lem:descent} with $(a,c)=(t,2t)$, each
solution $y_1$ modulo $p^{k-2t}$ accounts for $p^{t}$ residues $y$ modulo
$p^k$. This gives $p^t(1+\chi(c_0))$.
\end{proof}

{\bf Arithmetic-term ingredients.} The results above are turned into
arithmetic terms with the following standard devices, used in Sections
\ref{sec:valuation} and \ref{sec:cusp}. The greatest common divisor
$\gcd(x,y)$ is an arithmetic term
\cite{Mazzanti2002plainbases,prunescu2024numbertheoreticfunctions}.
By Lemma \ref{lem:eulerfermat}(ii), for $0\le u<p$ the number
$e(u)=u^{(p-1)/2}\bmod p$ lies in $\{0,1,p-1\}$ and determines
$\chi(u)$. For natural numbers $x,y$, the equality indicator is
$E(x,y)=1\tsub\bigl((x\tsub y)+(y\tsub x)\bigr)$, the indicator of
$x\ge y$ is $1\tsub(y\tsub x)$, the minimum is
$\min(x,y)=x\tsub(x\tsub y)$, and parity is $x\bmod2$. A negative residue
$-u\bmod p$ is represented by $(p\tsub(u\bmod p))\bmod p$. The valuation
$\vp$ with $p$ a variable is the arithmetic term of Proposition
\ref{prop:vp}.

\section{The trace of Frobenius}\label{sec:tracefp}

Lemma~\ref{lem:cong} shows that $a_p$ is congruent modulo $p$ to one specific
coefficient of the polynomial $(x^3+\bar A x+\bar B)^e$. Lemma
\ref{lem:extract} shows that the expression inside the outer $\md p$ of
$r$ equals exactly that coefficient. Lemma \ref{lem:bound} bounds
$|a_p|$ and allows us to recover the integer $a_p$ from its residue
$r$ when $p\ge17$.

\begin{lemma}\label{lem:cong}
Let $c_k\in\Z$ denote the coefficients of $f(x)^{e}=\sum_{k=0}^{3e}c_k x^k$.
Then $a_p \equiv c_{p-1} \pmod p$.
\end{lemma}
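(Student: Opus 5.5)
We argue directly from the definition $a_p=-\sum_{x\in\Fp}\chi(f(x))$ together with Euler's criterion and the power-sum evaluation of Lemma~\ref{lem:sumofpowers}.

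By Lemma~\ref{lem:eulerfermat}(ii), for every $x\in\Fp$ we have $\chi(f(x))\equiv f(x)^{e}\pmod p$; this also holds when $f(x)\equiv0$, since then both sides vanish because $e\ge1$. Summing over $x$ gives
\[
-a_p\equiv\sum_{x\in\Fp}f(x)^e=\sum_{k=0}^{3e}c_k\sum_{x\in\Fp}x^k=\sum_{k=0}^{3e}c_k S_k \pmod p .
\]
The expansion here takes place in $\Z[x]$, and we then reduce modulo $p$. The convention $0^0=1$ is needed so that the constant term $c_0$ is counted correctly at $x=0$, matching the convention in Lemma~\ref{lem:sumofpowers}.

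Next we apply Lemma~\ref{lem:sumofpowers}. It gives $S_k\equiv0$ unless $k>0$ and $(p-1)\mid k$, in which case $S_k\equiv-1$. The exponents run over $0\le k\le 3e=\tfrac{3(p-1)}{2}<2(p-1)$, so the only positive multiple of $p-1$ in this range is $k=p-1$. Hence $\sum_k c_kS_k\equiv -c_{p-1}$, and therefore $-a_p\equiv -c_{p-1}$, that is, $a_p\equiv c_{p-1}\pmod p$.

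No step is a real obstacle. The only point needing care is the degree bound $3e<2(p-1)$, which excludes a contribution from $k=2(p-1)$; it holds for every odd prime $p$.
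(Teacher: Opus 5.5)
Your proof is correct and follows essentially the same route as the paper's: Euler's criterion turns the character sum into $\sum_x f(x)^e$, and Lemma~\ref{lem:sumofpowers} isolates $c_{p-1}$ because it is the only coefficient index that is a positive multiple of $p-1$ in the range $0\le k\le 3e$. Your explicit checks of the $0^0=1$ convention and of the bound $3e<2(p-1)$ are details the paper leaves implicit.
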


\begin{proof}
By Euler's criterion, $\chi(u)\equiv u^{e}\pmod p$ for every $u\in\Fp$,
including $u=0$. Hence, modulo $p$,
\[
\sum_{x\in\Fp}\chi(f(x)) \;\equiv\; \sum_{x\in\Fp} f(x)^{e}
\;=\; \sum_{k=0}^{3e} c_k \sum_{x\in\Fp} x^{k}.
\]
The power sum $\sum_{x\in\Fp}x^k$ is congruent to $-1$ modulo $p$ when $k>0$
and $(p-1)\mid k$, and is congruent to $0$ modulo $p$ for every other
$k\ge 0$, see Lemma \ref{lem:sumofpowers}. In the
range $1\le k\le 3e=\tfrac{3(p-1)}{2}$, the only multiple of $p-1$ is
$k=p-1$. Therefore $\sum_{x}\chi(f(x)) \equiv -c_{p-1} \pmod p$, and
$a_p\equiv c_{p-1}\pmod p$ by \eqref{eq:count}.
\end{proof}

The congruence of Lemma~\ref{lem:cong} is classical. The residue
$c_{p-1}\bmod p$ is the Hasse invariant of the curve; see \cite[V.4]{Sil09}.
The next lemma turns this coefficient into an arithmetic term, by evaluating
the polynomial $f^e$ at the point $2^w$ and reading the coefficient as one
digit of the resulting integer.

\begin{lemma}\label{lem:extract} Let the prime $p$ be $\geq 3$. Then
$$\left \lfloor \frac{\left (2^{3w}+\bar A\, 2^w + \bar B \right )^{e}}{2^{w(p-1)}}
\right \rfloor \bmod 2^w \;=\; c_{p-1}.$$
\end{lemma}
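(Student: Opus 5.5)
The plan is to read $(2^{3w}+\bar A 2^w+\bar B)^e$ as the polynomial $f(x)^e=\sum_{k=0}^{3e}c_kx^k$ evaluated at $x=2^w$. The whole point is that $2^w$ is a large enough base that no carries occur between the base-$2^w$ digits. Concretely, I will show that every coefficient $c_k$ is a nonnegative integer with $c_k<2^w$. Then $\sum_k c_k 2^{wk}$ is exactly the base-$2^w$ expansion of the integer $(2^{3w}+\bar A2^w+\bar B)^e$. Dividing by $2^{w(p-1)}$ with floor discards the digits $c_0,\dots,c_{p-2}$, and reducing modulo $2^w$ keeps only the digit $c_{p-1}$. (When $p=3$ we have $e=1$, and the same argument applies since $p-1=2\le 3e$.)

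Nonnegativity is immediate because $\bar A,\bar B\ge0$, so every coefficient of $f^e$ is a sum of products of nonnegative numbers. For the upper bound, I would bound each $c_k$ by the sum of all coefficients, which is $f(1)^e=(1+\bar A+\bar B)^e$. Since $\bar A,\bar B\le p-1$, this gives
\[
c_k\le (1+\bar A+\bar B)^e\le (2p-1)^{(p-1)/2}.
\]

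The key step, and the only place needing care, is to compare this with $2^w=2^{p^2}$. I would invoke Lemma~\ref{lem:inequality} with $n=p\ge3$, which gives $(2p-1)^{p-1}<2^{p^2}$. A fortiori $(2p-1)^{(p-1)/2}\le(2p-1)^{p-1}<2^{w}$, so $0\le c_k<2^w$ for all $k$.

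Finally, I would write out the digit extraction explicitly. Split the sum as
\[
\sum_{k<p-1}c_k2^{wk}+2^{w(p-1)}\sum_{k\ge p-1}c_k2^{w(k-p+1)}.
\]
The first part is at most $(2^w-1)\sum_{k<p-1}2^{wk}<2^{w(p-1)}$, so the floor of the quotient by $2^{w(p-1)}$ is exactly $\sum_{k\ge p-1}c_k2^{w(k-p+1)}$. This sum is congruent to $c_{p-1}$ modulo $2^w$, and since $0\le c_{p-1}<2^w$ it equals $c_{p-1}$ after reduction. I expect no real obstacle beyond checking this coefficient bound.
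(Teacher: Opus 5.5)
Your proof is correct and follows the paper's own argument. Both show the $c_k$ are nonnegative, bound them by $(1+\bar A+\bar B)^e\le(2p-1)^{(p-1)/2}\le(2p-1)^{p-1}<2^{p^2}$ via Lemma~\ref{lem:inequality}, and read off the base-$2^w$ digit at position $p-1$; your explicit check that the floor discards the lower digits just spells out what the paper leaves implicit.
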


\begin{proof}
The coefficients $c_k$ are nonnegative, because $\bar A\ge 0$ and
$\bar B\ge 0$. Their sum equals
$f(1)^{e}=(1+\bar A+\bar B)^{e}\le (2p-1)^{(p-1)/2}\le (2p-1)^{p-1}<2^{p^2}$, the last step by Lemma \ref{lem:inequality}, so each $c_k < 2^w$. Evaluation of polynomials at $2^w$ is a ring homomorphism, so
$(2^{3w}+\bar A2^w+\bar B)^{e} = \sum_k c_k 2^{wk}$. Since every $c_k<2^w$,
the right-hand side is the base-$2^w$ representation of this integer, with
digit $c_k$ at position $k$. Division by $2^{w(p-1)}$ followed by reduction
modulo $2^w$ returns the digit at position $p-1$, which is $c_{p-1}$.
\end{proof}

\begin{lemma}\label{lem:bound} Let the prime $p$ be $> 3$.
If $4\bar A^3+27\bar B^2 \not\equiv 0 \pmod p$, then $|a_p|\le 2\sqrt p$.
If $4\bar A^3+27\bar B^2 \equiv 0 \pmod p$, then $|a_p|\le 1$.
\end{lemma}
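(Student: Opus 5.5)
The statement splits into the nonsingular and the singular case, and I will treat them separately.

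\emph{Nonsingular case.} Suppose $4\bar A^3+27\bar B^2\not\equiv0\pmod p$. The reduced cubic then defines an elliptic curve over $\Fp$, and $p>3$ ensures the short Weierstra\ss{} form is a genuine elliptic curve in characteristic $p$. The projective closure adds exactly one point at infinity, so $\#E(\Fp)=N+1$. Hasse's theorem, already quoted in Subsection~\ref{sec:facts-fp}, gives $|N-p|\le2\sqrt p$. Since $a_p=p-N$, this is exactly $|a_p|\le2\sqrt p$, and nothing further is needed.

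\emph{Singular case.} Suppose $4\bar A^3+27\bar B^2\equiv0\pmod p$ and put $a=\bar A\bmod p$, $b=\bar B\bmod p$ in $\Fp$. Since $p\ge5$, Lemma~\ref{lem:cubic} applies to $g=x^3+ax+b$ with $D=0$. Part (iv) gives the affine count $N=p-\chi(3\rho)$, where $\rho$ is the repeated root. Hence $a_p=p-N=\chi(3\rho)\in\{-1,0,1\}$, so $|a_p|\le1$.

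For a self-contained check of (iv), I would rerun its argument via \eqref{eq:count}. By (ii), $g=(x-\rho)^2(x+2\rho)$, so $\chi(g(x))=\chi(x+2\rho)$ for $x\ne\rho$, while $\chi(g(\rho))=0$. A full character sum over a translate of $\Fp$ vanishes, so
\[
\sum_{x\in\Fp}\chi(g(x))=-\chi(3\rho),
\]
which again gives $|a_p|\le1$.

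\emph{Main obstacle.} Nothing here is hard; the only point needing care is the identification in the nonsingular case. The bound is stated for the affine count $N$, whereas Hasse's theorem is about the projective count, so I must use that there is exactly one point at infinity. Everything else is a direct invocation of results already proved.
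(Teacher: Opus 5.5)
Your proof is correct and follows the paper's own argument: Hasse's theorem applied to the affine count for the nonsingular case, and for the singular case the factorization $f=(x-\rho)^2(x-\sigma)$ together with a vanishing character sum over a translate. The paper itself notes that the singular case is contained in Lemma~\ref{lem:cubic}(iv). The only cosmetic difference is that the paper proves $\sum_{x}\chi(x-\sigma)=0$ via Euler's criterion and the power sums, whereas you count squares and nonsquares, as in Lemma~\ref{lem:cubic}(iv).
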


The second case is also contained in Lemma \ref{lem:cubic}(iv), which
identifies $a_p$ with $\chi(3\rho)$.

\begin{proof}
The first case is Hasse's theorem \cite[V.1]{Sil09}. In the second case $f$
has a repeated root $\rho$ in $\overline{\Fp}$. The root $\rho$ lies in
$\Fp$, for the following reasons. If $\rho$ is a double root, it is the unique
double root of the cubic $f$, so the Frobenius map fixes it, and the third
root $\sigma$ then also lies in $\Fp$. If $\rho$ is a triple root, then
$3\rho=0$ because $f$ has no $x^2$ term, and $p\ne 3$ gives $\rho=0$; in this
case put $\sigma=\rho$. In both cases $f=(x-\rho)^2(x-\sigma)$ with
$\rho,\sigma\in\Fp$. For every $x\ne\rho$ we have $\chi(f(x))=\chi(x-\sigma)$,
and $\chi(f(\rho))=0$. The sum $\sum_{x\in\Fp}\chi(x-\sigma)$ vanishes: by
Euler's criterion and the substitution $u = x-\sigma$ it is congruent to
$\sum_{u \in \Fp} u^{\frac{p-1}{2}}$, which is $0$ modulo $p$ by Lemma
\ref{lem:sumofpowers}, and being a sum of $p$ values from $\{0,\pm 1\}$, at
least one of which is $0$, its absolute value is at most $p-1$; hence it
equals $0$. Therefore
\[
\sum_{x\in\Fp}\chi(f(x)) \;=\; \left ( \sum_{x\in\Fp}\chi(x-\sigma) \right ) - \chi(\rho-\sigma)
\;=\; -\chi(\rho-\sigma) \in \{0,\pm 1\}. \qedhere
\]

\end{proof}

\section{Formula for $\Fp$}\label{sec:formulafp}

\begin{theorem}\label{thm:ztopz}
Let $p\ge 17$ be prime and let $A,B\in\N$. Put $\bar A = A \md p$ and
$\bar B = B \md p$, and recall that
\begin{equation}
r \;=\; \left (\left \lfloor  \frac{ \left (2^{3p^2} + \bar A\, 2^{p^2} + \bar B\right )^{\lfloor p/2\rfloor}}{
 2^{p^2(p-1)}} \right\rfloor \md 2^{p^2}\right ) \md p .
\end{equation}
Then the number $N$ of points $(x,y)\in(\Z/p\Z)^2$ with $y^2=x^3+Ax+B$ equals
\begin{equation}\label{eq:N}
N \;=\; (p \tsub r) \;+\; p\cdot\Bigl(1 \tsub \bigl((4p+1)\tsub r^2\bigr)\Bigr).
\end{equation}
Equivalently, the quantity $a_p := p-N$ equals $r$ if $r^2\le 4p$, and
equals $r-p$ otherwise; in the nonsingular case this is the trace of
Frobenius. The statement holds for every pair $(A,B)$, including the
pairs for which the cubic $x^3+\bar A x+\bar B$ has a repeated root modulo
$p$. The number of projective points of the cubic is $N+1$.
\end{theorem}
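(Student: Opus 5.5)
By Lemmas \ref{lem:extract} and \ref{lem:cong}, the inner expression equals $c_{p-1}$ and $a_p\equiv c_{p-1}\pmod p$. Hence $r\in[0,p-1]$ is the least nonnegative residue of $a_p$ modulo $p$. I will then use Lemma \ref{lem:bound} to bound $|a_p|$: in the nonsingular case $|a_p|\le 2\sqrt p$, and in the singular case $|a_p|\le1\le2\sqrt p$. So in every case $a_p$ is an integer in $[-2\sqrt p,\,2\sqrt p]$ with $a_p\equiv r\pmod p$, and the task is to recover $a_p$ from $r$ and show that the term evaluates to $p-a_p$.

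There are two cases. If $a_p\ge0$, then $0\le a_p\le2\sqrt p<p$ (as $p>4$), so $r=a_p$ and $r^2\le4p$. If $a_p<0$, then $r=p+a_p\ge p-2\sqrt p$. The key point is to show that then $r^2>4p$, i.e.\ $p-2\sqrt p>2\sqrt p$, i.e.\ $p>16$. This is exactly where $p\ge17$ enters, and it is the only delicate inequality. Since $r$ is an integer, $r^2\le 4p$ is equivalent to $(4p+1)\tsub r^2\ge1$. Thus the indicator $1\tsub((4p+1)\tsub r^2)$ equals $0$ when $r^2\le4p$ and $1$ when $r^2>4p$. Conversely, the two cases are distinguished by $r$ alone, since $r\le2\sqrt p$ in the first case and $r>2\sqrt p$ in the second. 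This gives the stated equivalence: $a_p=r$ when $r^2\le4p$, and $a_p=r-p$ otherwise.

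It remains to evaluate \eqref{eq:N}. Since $r<p$, we have $p\tsub r=p-r$ exactly. In the first case $N=p-r+0=p-a_p$. In the second case $N=p-r+p=p-(r-p)=p-a_p$. Both agree with $N=p-a_p$ from \eqref{eq:count}.

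The singular case needs nothing extra, because Lemma \ref{lem:bound} already covers it. The statement about $N+1$ projective points follows from the single point at infinity of the Weierstra\ss{} cubic, which is also valid when the cubic is singular. Finally, every ingredient is a composition of the allowed operations, with positive denominators in the divisions, so the expression is an arithmetic term.
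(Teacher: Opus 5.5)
Your proposal is correct and follows the paper's proof essentially step for step. Lemmas~\ref{lem:cong} and~\ref{lem:extract} make $r$ the least nonnegative residue of $a_p$, and Lemma~\ref{lem:bound} confines $a_p$ to $[-2\sqrt p,2\sqrt p]$ in both the nonsingular and singular cases. The hypothesis $p\ge17$ then gives $p-2\sqrt p>2\sqrt p$, so the test $r^2\le4p$, encoded by $1\tsub((4p+1)\tsub r^2)$, selects the correct one of the two candidates $r$ and $r-p$.
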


\begin{proof}[Proof of Theorem \ref{thm:ztopz}]
By Lemmas \ref{lem:cong} and \ref{lem:extract}, the residue $r$ defined by
\eqref{eq:r} satisfies $r\equiv a_p \pmod p$ and $0\le r<p$. By Lemma
\ref{lem:bound}, $|a_p|\le 2\sqrt p$ in both cases, so $a_p=r$ or $a_p=r-p$.
For $p\ge 17$ we have $\sqrt p>4$, hence $p-2\sqrt p>2\sqrt p$. Consequently
exactly one of the two candidates lies in the interval $[-2\sqrt p,2\sqrt p]$,
and the test $r^2\le 4p$ identifies it. If $a_p\ge 0$, then $r=a_p\le 2\sqrt
p$ and $r^2\le 4p$. If $a_p<0$, then $r=a_p+p\ge p-2\sqrt p>2\sqrt p$ and
$r^2>4p$. For natural numbers $u$, the indicator of the condition $u^2>4p$
equals $1\tsub((4p+1)\tsub u^2)$. Substituting $u=r$ turns this case
distinction into equation \eqref{eq:N}. The projective closure of a
Weierstra\ss{} cubic has exactly one point at infinity, whether the curve is
singular or not. This gives the final claim.
\end{proof}

\begin{remark}[size of the intermediate integers]\label{rem:width}
The digit width $w=p^2$ was chosen for the simplicity of the statement. The
proof of Lemma \ref{lem:extract} only requires $2^w>(2p-1)^{(p-1)/2}$, so
any admissible width of order $p\log p$ may replace $p^2$, at the cost of a
more complicated expression for $w$. With such a width, the largest
intermediate integer of the term, the power in \eqref{eq:r}, shrinks from
about $\tfrac32 p^3$ binary digits to $O(p^2\log p)$ binary digits. This
variant was also verified numerically, for all primes $17\le p\le 199$ and
six parameter pairs for each prime.
\end{remark}

\begin{remark}[restriction to prime moduli]\label{rem:nogo}
The proof of Lemma~\ref{lem:cong} uses the field structure of $\Z/p\Z$ in two
places: Euler's criterion requires it, and so does the evaluation of the
power sums. The argument does not extend to a general modulus $n$ by a change
of exponent, for the following reason. An extension of this type would need
one exponent $e'$ with $u^{e'}\equiv u^{(p-1)/2} \pmod p$ for every $u$ and
every prime $p$ dividing $n$, that is, $e'\equiv (p-1)/2 \pmod{p-1}$ for
every prime $p$ dividing $n$. If $n$ has a prime factor congruent to $1$
modulo $4$, this congruence forces $e'$ to be even. If $n$ also has a prime
factor congruent to $3$ modulo $4$, it forces $e'$ to be odd. For such $n$ no
exponent $e'$ exists. This observation rules out only the single-exponent
extension described above; short arithmetic terms for composite moduli can be
obtained by a different method, as Section~\ref{sec:composite} shows.
\end{remark}

\section{The curve over $\Fp$, counted over every $\mathbb{F}_{p^k}$}\label{sec:formulafpk}

{\bf Notation.} Throughout, $p\ge 17$ is prime, $k\ge 1$, $q=p^k$, and
$A,B\in\N$ with $\bar A=A \md p$, $\bar B=B \md p$ and
$f(x)=x^3+\bar Ax+\bar B$. As in Section \ref{sec:facts}, $N_k$ denotes the number of pairs $(x,y)\in\Fq^2$ with $y^2=f(x)$, so that $N_1=N$. Put $a_p=p-N_1$; for a nonsingular cubic this is the trace of Frobenius. Let $\chi_q$ denote
the quadratic character of $\Fq$, extended by $\chi_q(0)=0$.

For a nonsingular cubic, the count over $\mathbb F_{p^k}$ is determined
by a linear recurrence whose coefficients are $a_p$ and $p$.
For fixed $k$, the recurrence gives a polynomial expression in these
quantities. To obtain a term with $k$ variable, we use the generating
function in Theorem~\ref{thm:term}. For a singular cubic, the count
follows from the restriction of the quadratic character of
$\mathbb F_{p^k}$ to $\Fp$.

\begin{lemma}\label{lem:restr}
For every $c\in\Fp$ with $c\ne 0$, one has $\chi_q(c)=\chi_p(c)^k$.
\end{lemma}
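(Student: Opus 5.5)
Apply Euler's criterion in both fields $\Fp$ and $\Fq$, and compare the two exponents through the factorisation of $q-1$.

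First, $\Fq^{\times}$ is a group of order $q-1$. The proof of Lemma~\ref{lem:eulerfermat}(ii) uses only that the multiplicative group of the field has even order $q-1$ and that $X^{(q-1)/2}-1$ has at most $(q-1)/2$ roots, so it carries over verbatim from $p$ to $q$. Hence $\chi_q(c)=c^{(q-1)/2}$ in $\Fq$ for every $c\ne0$, with values $\pm1$.

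Next, for $c\in\Fp^{\times}$ write
\[
\frac{q-1}{2}=\frac{p-1}{2}\,\bigl(1+p+p^2+\cdots+p^{k-1}\bigr).
\]
Then
\[
c^{(q-1)/2}=\bigl(c^{(p-1)/2}\bigr)^{1+p+\cdots+p^{k-1}}=\chi_p(c)^{1+p+\cdots+p^{k-1}}
\]
by Euler's criterion in $\Fp$. Since $\chi_p(c)=\pm1$, only the parity of the exponent matters. The exponent $1+p+\cdots+p^{k-1}$ is a sum of $k$ odd numbers, so it is congruent to $k$ modulo $2$. Therefore $c^{(q-1)/2}=\chi_p(c)^k$.

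Both sides are elements of $\{\pm1\}\subset\Fp\subset\Fq$, and $1\ne-1$ because $p$ is odd, so this equality in $\Fq$ is an equality of integers. The lemma follows. The only point needing care is the transfer of Euler's criterion to $\Fq$; there is no real obstacle.
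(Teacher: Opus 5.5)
Your proof is correct and follows the paper's argument essentially verbatim. Both apply Euler's criterion in $\Fq$, use the factorization $\frac{q-1}{2}=\frac{p-1}{2}\,(1+p+\cdots+p^{k-1})$, and note that the second factor is a sum of $k$ odd numbers, so it has the parity of $k$. Your remarks on carrying Euler's criterion over to $\Fq$ and on reading the equality of $\pm1$ as an equality of integers ($p$ odd) make explicit details the paper leaves implicit.
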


\begin{proof}
By Euler's criterion in $\Fq$, $\chi_q(c)=c^{(q-1)/2}$. Write
\[
\frac{q-1}{2}=\frac{p-1}{2}\,\bigl(p^{k-1}+p^{k-2}+\cdots+1\bigr).
\]
The number $c^{(p-1)/2}=\chi_p(c)$ equals $\pm 1$, and the sum
$p^{k-1}+\cdots+1$ has $k$ odd summands, so it is congruent to $k$ modulo
$2$. Therefore $\chi_q(c)=\chi_p(c)^{\,p^{k-1}+\cdots+1}=\chi_p(c)^k$.
\end{proof}

\begin{theorem}\label{thm:ext}
Define the integer sequence $s_0=2$, $s_1=a_p$,
$s_j=a_p\,s_{j-1}-p\,s_{j-2}$ for $j\ge 2$.
\begin{itemize}
\item[(i)] If $4\bar A^3+27\bar B^2\not\equiv 0 \pmod p$, then
$N_k=p^k-s_k$, and $|s_k|\le 2p^{k/2}$.
\item[(ii)] If $4\bar A^3+27\bar B^2\equiv 0 \pmod p$, then
$N_k=p^k-a_p^{\,k}$, where $a_p\in\{0,\pm1\}$.
\end{itemize}
\end{theorem}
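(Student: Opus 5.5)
For part (i), I will use the facts recalled in Subsection~\ref{sec:facts-fp}. Let $\alpha,\beta$ be the roots of $T^2-a_pT+p$. Then $N_k=p^k-\alpha^k-\beta^k$, and $|\alpha|=|\beta|=\sqrt p$ \cite[Theorem V.2.3.1]{Sil09}. Put $t_j=\alpha^j+\beta^j$. Then $t_0=2$ and $t_1=\alpha+\beta=a_p$. Since $\alpha^2=a_p\alpha-p$, multiplying by $\alpha^{j-2}$ gives $\alpha^j=a_p\alpha^{j-1}-p\alpha^{j-2}$, and the same holds for $\beta$. Adding the two identities shows that $t_j$ satisfies the recurrence of $s_j$. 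Induction on $j$ then gives $s_j=t_j$ for all $j$, so $N_k=p^k-s_k$. The bound follows from the triangle inequality: $|s_k|\le|\alpha|^k+|\beta|^k=2p^{k/2}$. This part is routine once Silverman's theorem is granted.

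For part (ii), I will argue directly, as in Lemma~\ref{lem:bound}, rather than through the zeta function.

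\emph{Factorisation.} By Lemma~\ref{lem:cubic}(ii), $f=(x-\rho)^2(x-\sigma)$ with $\rho,\sigma\in\Fp$ and $\sigma=-2\rho$. This factorisation persists over $\Fq$.

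\emph{Character sum.} The count over $\Fq$ is $N_k=q+\sum_{x\in\Fq}\chi_q(f(x))$, which is the analogue of \eqref{eq:count} and uses $1+\chi_q(u)$ solutions of $y^2=u$ in the field $\Fq$ of odd characteristic. For $x\ne\rho$ we have $\chi_q(f(x))=\chi_q(x-\sigma)$, and $f(\rho)=0$. The full sum $\sum_{x\in\Fq}\chi_q(x-\sigma)$ vanishes, because $\Fq^\times$ has equally many squares and nonsquares. Hence
\[
N_k=q-\chi_q(\rho-\sigma)=q-\chi_q(3\rho).
\]

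\emph{Restriction to $\Fp$.} Since $3\rho\in\Fp$, Lemma~\ref{lem:restr} gives $\chi_q(3\rho)=\chi_p(3\rho)^k$ whenever $3\rho\ne0$. When $3\rho=0$ (the cusp), both sides are $0$ for $k\ge1$.

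\emph{Identification with $a_p$.} By Lemma~\ref{lem:cubic}(iv), $a_p=p-N_1=\chi_p(3\rho)$. Therefore $N_k=p^k-a_p^{\,k}$, and $a_p\in\{0,\pm1\}$.

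The only point needing care is the vanishing of $\sum_{x\in\Fq}\chi_q(x-\sigma)$ over the larger field. I will prove it exactly as in Lemma~\ref{lem:eulerfermat}(ii): the squaring map is two-to-one on the cyclic group $\Fq^\times$, so exactly half of its elements are squares. Translating by $\sigma$ permutes $\Fq$, so the sum of $\chi_q$ over all of $\Fq$ is the same as the sum over $u=x-\sigma$, which is zero.

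I do not expect a genuine obstacle here. The nonsingular case rests entirely on the cited theorem, and the singular case reduces to bookkeeping with characters.
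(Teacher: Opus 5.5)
Your proposal is correct and follows essentially the same route as the paper. Part (i) uses the Frobenius eigenvalues $\alpha,\beta$ and derives the recurrence from $\alpha+\beta=a_p$, $\alpha\beta=p$. Part (ii) uses the factorisation $f=(x-\rho)^2(x-\sigma)$ over $\Fp$, the vanishing of $\sum_{x\in\Fq}\chi_q(x-\sigma)$, and Lemma~\ref{lem:restr}. The only difference is cosmetic: you identify $a_p=\chi_p(3\rho)$ through Lemma~\ref{lem:cubic}(iv), whereas the paper reads it off from the $k=1$ case of the same character-sum computation.
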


\begin{proof}
(i) The curve is an elliptic curve over $\Fp$. By the rationality of its
zeta function \cite[V.2]{Sil09}, the number of projective points over $\Fq$
equals $q+1-\alpha^k-\beta^k$, where $\alpha,\beta$ are the two roots of
$T^2-a_pT+p$. The sums $s_k=\alpha^k+\beta^k$ are rational integers and
satisfy the stated recurrence, because $\alpha+\beta=a_p$ and
$\alpha\beta=p$. Subtracting the single point at infinity gives
$N_k=q-s_k$. Since $|\alpha|=|\beta|=\sqrt p$, we get $|s_k|\le 2p^{k/2}$.

(ii) As in the singular case with $q=p$,
$f=(x-\rho)^2(x-\sigma)$ with $\rho,\sigma\in\Fp$, and the same
computation, carried out over $\Fq$ instead of $\Fp$, gives
$\sum_{x\in\Fq}\chi_q(f(x))=-\chi_q(\rho-\sigma)$; here the sum
$\sum_{x\in\Fq}\chi_q(x-\sigma)$ vanishes because $x\mapsto x-\sigma$ permutes
$\Fq$, $\chi_q(0)=0$, and $\chi_q$ takes the value $1$ on the $(q-1)/2$ nonzero
squares and the value $-1$ on the remaining $(q-1)/2$ elements of $\Fq^{*}$. Hence
$N_k=q+\sum_x\chi_q(f(x))=q-\chi_q(\rho-\sigma)$. For $k=1$ this
identifies $a_p=\chi_p(\rho-\sigma)$. By Lemma \ref{lem:restr}, either
$\rho=\sigma$ and both sides of (ii) equal $q$, or
$\chi_q(\rho-\sigma)=\chi_p(\rho-\sigma)^k=a_p^{\,k}$.
\end{proof}

The next theorem turns part (i) into a single arithmetic term with $k$ as a
free variable. The sequence $s_j$ has mixed signs, and digit extraction
needs nonnegative digits. C-recursive sequences with mixed signs have been treated in \cite{PS24}. The remedy is to shift $s_j$ by the dominating
geometric sequence $2p^j$. The shifted sequence $t_j=s_j+2p^j$ is
nonnegative by the Weil bound of Theorem \ref{thm:ext}(i), still satisfies
a linear recurrence, of order three instead of two, and $s_k$ is recovered
from $t_k$ at the end by subtracting the shift.

\begin{theorem}\label{thm:term}
Assume $4\bar A^3+27\bar B^2\not\equiv 0\pmod p$. Let $r$ be as in
Theorem \ref{thm:ztopz} and put $i=1\tsub((4p+1)\tsub r^2)$, so that
$a_p=r-p\,i$ by that theorem. Define its positive and negative parts by
\[
a^+=r(1\tsub i),\qquad a^-=i(p\tsub r),
\]
so that $a_p=a^+-a^-$. Put
\[
W=(k+1)\,p,\qquad u=2^W,
\]
\[
P^+=4u^2+3a^-u+p(a^++2),\qquad
P^-=(3a^++2p)u+pa^-,
\]
\[
\widetilde P=P^+\tsub P^-,\qquad
\widetilde Q=\bigl((u^2+a^-u+p)\tsub a^+u\bigr)(u\tsub p),
\]
\[
F=\left\lfloor \frac{u^{\,k+1}\,\widetilde P}{\widetilde Q}
\right\rfloor .
\]
Then
\[
N_k \;=\; 3p^k\tsub\bigl(F \md u\bigr).
\]
In particular, the right-hand side is an arithmetic term over the natural
numbers, with no signed intermediate quantities.
\end{theorem}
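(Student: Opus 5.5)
The plan is to realize Theorem~\ref{thm:ext}(i) through the shifted sequence $t_j=s_j+2p^j$ and to read $t_k$ off as one base-$u$ digit, in the spirit of Theorem~\ref{ThmExtraction1}. I will use base $u=2^W$ itself instead of $b^{n^2}$, so I will re-prove the extraction directly rather than quote it.

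\textbf{Step 1: the generating function of $t_j$.} Write $a=a_p=a^+-a^-$; this decomposition holds because $i\in\{0,1\}$ and, by Theorem~\ref{thm:ztopz}, $a_p=r$ when $i=0$ and $a_p=r-p$ when $i=1$. From the recurrence, $\sum_j s_jz^j=(2-az)/(1-az+pz^2)$, and $\sum_j 2p^jz^j=2/(1-pz)$. Adding these gives
\[
\sum_{j\ge0}t_jz^j=\frac{4-(3a+2p)z+p(a+2)z^2}{(1-az+pz^2)(1-pz)} .
\]
At $z=1/u$, after multiplying numerator and denominator by $u^3$, this equals $uP/Q$ with $P=4u^2-(3a+2p)u+p(a+2)$ and $Q=(u^2-au+p)(u-p)$.

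\textbf{Step 2: the truncated subtractions are exact.} Substituting $a=a^+-a^-$, the quantity $P$ is exactly $P^+-P^-$, and $u^2-au+p$ is $(u^2+a^-u+p)-a^+u$. The requirements are then $P>0$, $u^2-au+p>0$ and $u>p$. All three hold because $|a|\le2\sqrt p<p$ while $u=2^{(k+1)p}$ is enormous compared with $p$. Hence $\widetilde P=P$, $\widetilde Q=Q>0$, and
\[
\frac{u^{k+1}\widetilde P}{\widetilde Q}=u^k\sum_{j\ge0}t_ju^{-j}.
\]
The series converges since $t_j\le 4p^j$ by the Weil bound $|s_j|\le2p^{j/2}\le 2p^j$ of Theorem~\ref{thm:ext}(i); that bound also gives $t_j\ge0$.

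\textbf{Step 3: digit extraction.} Split the sum at $j=k$.
\begin{itemize}
\item For $j<k$, each term $t_ju^{k-j}$ is a nonnegative integer divisible by $u$.
\item The term $j=k$ contributes $t_k$, and $t_k\le4p^k<u$.
\item The tail satisfies $\sum_{j>k}t_ju^{k-j}\le 4p^k\sum_{m\ge1}(p/u)^m\le 8p^{k+1}/u$.
\end{itemize}
I need this tail to be $<1$, that is $2^{(k+1)p}>8p^{k+1}$. This follows from $2^p>8p$, which holds for $p\ge17$, raised to the power $k+1$ (since $8^{k+1}\ge8$). Consequently $F$ equals the integer part of the sum, and $F\equiv t_k \pmod u$ with $0\le t_k<u$, so $F\md u=t_k$.

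\textbf{Step 4: conclusion.} By Theorem~\ref{thm:ext}(i), $N_k=p^k-s_k=3p^k-t_k$. This quantity is a point count, hence nonnegative, so the truncated subtraction $3p^k\tsub t_k$ is exact.

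The main obstacle is the quantitative bookkeeping in Steps 2 and 3. One must check carefully that every truncated subtraction in $\widetilde P$ and $\widetilde Q$ is exact, with separate care for the cases $a^+=0$ and $a^-=0$. One must also confirm that the width $W=(k+1)p$ simultaneously dominates $t_k$ and forces the tail below $1$ for all $k\ge1$ and $p\ge17$; this is where the inequality $2^p>8p$ carries the argument.
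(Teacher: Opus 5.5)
Your proposal is correct and follows the paper's proof essentially step for step. Both use the shift $t_j=s_j+2p^j$, evaluate the rational generating function at $1/u$, check that the truncated subtractions are exact, bound the tail by $4p^{k+1}/(u-p)\le 8p^{k+1}/u<1$, and read $t_k$ as the last base-$u$ digit. The only cosmetic difference is how $P(1/u)>0$ is obtained: the paper deduces it from the positivity of the convergent series and of $Q(1/u)$, whereas your direct size estimate using $|a_p|<p\ll u$ is equally valid.
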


\begin{proof}
Put $t_j=s_j+2p^j$. By the bound of Theorem \ref{thm:ext}(i),
$0\le t_j\le 4p^j$, so in particular $t_j<u$ for $j\le k$. The generating
function of $s_j$ is $(2-a_pz)/(1-a_pz+pz^2)$ and that of $2p^j$ is
$2/(1-pz)$; their sum is $P(z)/Q(z)$ with $Q(z)=(1-a_pz+pz^2)(1-pz)$ and
$P(z)=4-(3a_p+2p)z+p(a_p+2)z^2$. Since $|a_p|\le2\sqrt p<p<u$, both
factors of $Q(1/u)$ are positive. Moreover the series converges at $1/u$
and has positive value. Hence $P(1/u)>0$. It follows that the truncated
subtractions in the definitions of $\widetilde P$ and $\widetilde Q$ are
exact, and direct expansion gives
\[
\widetilde P=u^2P(1/u),\qquad \widetilde Q=u^3Q(1/u).
\]
Consequently
\[
\frac{u^{\,k+1}\widetilde P}{\widetilde Q}
= u^{k}\,\frac{P(1/u)}{Q(1/u)}
= \sum_{j\ge 0} t_j\,u^{\,k-j}
= \sum_{j=0}^{k} t_j\,u^{\,k-j} \;+\; \sum_{j> k} t_j\,u^{\,k-j}.
\]
The second sum is smaller than $1$: it is at most
$\sum_{\ell\ge 1}4p^{k+\ell}u^{-\ell}=4p^{k+1}/(u-p)\le 8p^{k+1}/u$, as $u\ge 2p$, and $u=2^{(k+1)p}>8p^{k+1}$
for $p\ge 17$. The first sum is an integer whose base-$u$ digits are the
$t_j$. Hence $F$ equals the first sum, and $F\md u=t_k$. Finally
$N_k=p^k-s_k=3p^k-t_k\ge0$, so the last truncated subtraction in the
statement is also exact.
\end{proof}

\begin{remark}[one term for all cases]\label{rem:oneterm}
Let $N_k^{\rm ns}=3p^k\tsub(F\md u)$ denote the term in Theorem
\ref{thm:term}. The formulas defining it remain well-defined for a singular
cubic as well: Lemma \ref{lem:bound} then gives $|a_p|\le1$, so in particular
$\widetilde Q>0$ by the same argument as in the proof. For the singular case put
\[
h=r\tsub\bigl(r\tsub(p\tsub r)\bigr),\qquad
N_k^{\rm sing}=(p^k\tsub h)+2h\bigl((ik)\md2\bigr).
\]
Indeed, in the singular case $r\in\{0,1,p-1\}$, $h$ is respectively
$0,1,1$, and Theorem \ref{thm:ext}(ii) gives precisely this formula. Now set
\[
\delta=1\tsub\bigl((4\bar A^3+27\bar B^2)\md p\bigr).
\]
The single arithmetic term
\[
(1\tsub\delta)N_k^{\rm ns}+\delta N_k^{\rm sing}
\]
equals $N_k$ for every $A,B\in\N$, since $\delta$ is $1$ exactly in the
singular case. The width $W=(k+1)p$ was chosen for simplicity; any width
with $2^W>8p^{k+1}$ works, so a width of order $k\log p$ suffices if a term
for the binary length is used.
\end{remark}

\section{The curve over $\Z$, counted over every $\mathbb{Z}/{p^k} \mathbb Z$}\label{sec:zoverptokz}

Fix a prime $p\ge5$ and coefficients $A,B\in\N$. Put $f(x)=x^3+Ax+B$ and

\[
  \Delta \;=\; 4A^3+27B^2 \;=\; -\operatorname{disc}(f),
  \qquad
  N_k \;=\; \#\bigl\{(x,y)\in(\Z/p^k\Z)^2 \;:\; y^2=f(x)\bigr\}.
\]

From this section onward, $N_k$ denotes the count modulo $p^k$.
For $p\ge17$, its initial value $N_1=p-a_p$ is given by
Theorem~\ref{thm:ztopz}.

If $p\nmid\Delta$, Hensel's lemma gives $N_k=p^{k-1}N_1$.
If $p\mid\Delta$, the reduced curve has one singular point $(\rho,0)$,
and Corollary~\ref{cor:structure} gives
$N_k=p^{k-1}(N_1-1)+M_k$, where $M_k$ counts the solutions reducing to
$(\rho,0)$. By Lemma~\ref{lem:cubic}, the singular point is a node when
$p\nmid A$, with $\rho\equiv-3B/(2A)\pmod p$. When $p\mid A$, we also
have $p\mid B$, and the singular point is the cusp $(0,0)$.
The formulas below compute $M_k$ in these two cases.

\section{Good reduction}

\begin{theorem}\label{thm:good}
If $p\nmid\Delta$ then $N_k=p^{k-1}N_1$ for every $k\ge1$.
\end{theorem}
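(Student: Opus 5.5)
The plan is to read this off directly from Corollary~\ref{cor:structure}, which already contains the statement as its case $p\nmid\Delta$. I will nevertheless write out the short argument so the section is self-contained. Put $F(x,y)=y^2-f(x)$. Every solution modulo $p^k$ reduces to a solution modulo $p$, so I group the $N_k$ solutions according to their reduction $(x_0,y_0)\in\Fp^2$. It then suffices to show that each of the $N_1$ solutions modulo $p$ has exactly $p^{k-1}$ lifts, since lifts of distinct residues are distinct.

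For the lifting count I will invoke Lemma~\ref{lem:hensel}. This requires the gradient $(-f'(x_0),\,2y_0)$ to be nonzero modulo $p$ at every solution $(x_0,y_0)$ modulo $p$. Suppose instead that both components vanish. Since $p$ is odd, $2y_0\equiv0$ gives $y_0\equiv0$. The equation $F\equiv0$ then gives $f(x_0)\equiv0$, and together with $f'(x_0)\equiv0$ this makes $x_0$ a repeated root of $f$ modulo $p$. By Lemma~\ref{lem:cubic}(i), applied to $a=A\bmod p$ and $b=B\bmod p$, this forces $4a^3+27b^2\equiv0$, that is $p\mid\Delta$, contrary to the hypothesis. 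So every point modulo $p$ is nonsingular, and the counting form of Hensel's lemma gives $p^{k-1}$ lifts for each of them.

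Summing over the $N_1$ points gives $N_k=p^{k-1}N_1$; for $k=1$ the claim is trivial. There is no real obstacle here. The only points that need care are that $p$ is odd, which is used to pass from $2y_0\equiv0$ to $y_0\equiv0$, and that Lemma~\ref{lem:cubic}(i) is applied to the reduced coefficients. Both are covered by the standing assumption $p\ge5$.
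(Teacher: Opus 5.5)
Your proposal is correct and matches the paper, which proves Theorem~\ref{thm:good} simply as the case $p\nmid\Delta$ of Corollary~\ref{cor:structure}. The argument you write out is exactly the proof of that corollary: group the solutions by their reduction modulo $p$, show the gradient is nonzero using Lemma~\ref{lem:cubic}(i), and apply the counting form of Hensel's lemma (Lemma~\ref{lem:hensel}).
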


\begin{proof}
This is the case $p\nmid\Delta$ of Corollary \ref{cor:structure}.
\end{proof}

\begin{corollary}\label{cor:good}
Let $p\ge17$ be prime with $p\nmid\Delta$, and let $r$ be as in Theorem \ref{thm:ztopz}, i.e.
\[
  r \;=\;\left\lfloor\frac{\bigl(2^{3p^2}+\bar A\,2^{p^2}+\bar B\bigr)^{\lfloor p/2\rfloor}}{2^{p^2(p-1)}}\right\rfloor
      \bmod 2^{p^2} \bmod p .
\]
Then for every $k\ge1$
\[
  N_k \;=\; p^{\,k-1}\Bigl(\,(p\tsub r)+p\cdot\bigl(1\tsub((4p+1)\tsub r^2)\bigr)\Bigr),
\]
an arithmetic term in $A,B,p,k$.
\end{corollary}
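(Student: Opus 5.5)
The corollary combines Theorem~\ref{thm:good} with Theorem~\ref{thm:ztopz}. First, the hypothesis $p\nmid\Delta$ with $\Delta=4A^3+27B^2$ is a statement about integers. Since $\bar A\equiv A$ and $\bar B\equiv B\pmod p$, it is equivalent to $4\bar A^3+27\bar B^2\not\equiv0\pmod p$. So the reduced cubic is nonsingular, and Theorem~\ref{thm:good} applies. It gives $N_k=p^{k-1}N_1$ for every $k\ge1$, where $N_1$ is the number of affine points modulo $p$.

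Second, I identify $N_1$. The count modulo $p$ depends only on $\bar A,\bar B$, and $p\ge17$ is prime, so Theorem~\ref{thm:ztopz} gives
\[
N_1=(p\tsub r)+p\cdot\bigl(1\tsub((4p+1)\tsub r^2)\bigr)
\]
with exactly the $r$ displayed in the corollary. That theorem needs no nonsingularity hypothesis, although here we are in the nonsingular case anyway. Substituting this into $N_k=p^{k-1}N_1$ yields the displayed formula.

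Finally, I check that the right-hand side is an arithmetic term in $A,B,p,k$. The factor $p^{k-1}$ can be written as $p^{k\tsub1}$, which is exact for $k\ge1$. The residues $\bar A,\bar B$ are obtained by $\md$, which is an arithmetic term, and $r$ is built from exponentiation, integer division and $\md$ with positive denominators $2^{p^2(p-1)}$, $2^{p^2}$ and $p$. Every truncated subtraction in the factor for $N_1$ was already shown exact in the proof of Theorem~\ref{thm:ztopz}, and the outer product is ordinary multiplication.

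There is no real obstacle. The only point needing care is the first step: good reduction is phrased through $\Delta$ over $\Z$ (Section~\ref{sec:zoverptokz}), while Theorem~\ref{thm:ztopz} works with the reduced coefficients. I would state explicitly that the two conditions agree modulo $p$, and that the counts modulo $p^k$ appearing in Theorem~\ref{thm:good} use the original integers $A,B$, which Hensel's lemma via Corollary~\ref{cor:structure} handles directly.
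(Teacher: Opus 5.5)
Your proposal is correct and follows the paper's argument: Theorem~\ref{thm:good} gives $N_k=p^{k-1}N_1$, and Theorem~\ref{thm:ztopz} supplies $N_1$ as the displayed term, so the corollary is that term multiplied by $p^{k-1}$. One small inaccuracy, which does not affect the result: not every truncated subtraction there is exact, since $(4p+1)\tsub r^2$ is deliberately truncated to form the indicator of $r^2>4p$; exactness is not needed, because $\tsub$ is itself a primitive operation of arithmetic terms.
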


This is the term of Theorem \ref{thm:ztopz} multiplied by $p^{k-1}$.

\section{Bad reduction: the node}

Assume now $p\mid\Delta$. By Corollary \ref{cor:structure},
\begin{equation}\label{eq:split}
  N_k \;=\; p^{k-1}\,(N_1-1)\;+\;M_k,
\end{equation}
where $M_k$ counts the points of $(\Z/p^k\Z)^2$ on the curve reducing to
$(\rho,0)$. The two cases of Definition \ref{def:reduction} are the \emph{node}
($p\nmid A$) and the \emph{cusp} ($p\mid A$, hence $p\mid B$). This section
treats the node.

\subsection{Invariants}

Let $p\nmid A$. Since $\rho\not\equiv0$, the polynomial $3X^2+A$ has the simple
root $\rho$ modulo $p$, so by Corollary \ref{cor:hensel1}(b) it has a unique root
$\tilde\rho\in\Zp^{\times}$ lifting $\rho$. Put
\[
  d \;=\; 3\tilde\rho \;\in\;\Zp^{\times},
  \qquad
  a_0 \;=\; f(\tilde\rho) \;=\; B-2\tilde\rho^{\,3},
  \qquad
  m \;=\; v_p(a_0)\;\ge\;1 .
\]
Substituting $x=\tilde\rho+u$ kills the linear term:
\begin{equation}\label{eq:shift}
  f(\tilde\rho+u)\;=\;u^3+d\,u^2+a_0 .
\end{equation}
Both $m$ and the class of $d$ are computable from $A,B,p$ alone. Indeed,
\eqref{eq:shift} is Lemma \ref{lem:shift}, which also gives
$\Delta=27\,a_0\,(4\tilde\rho^{\,3}+a_0)$ with $4\tilde\rho^{\,3}+a_0$ a
unit, so that
\[
  m \;=\; v_p(\Delta),
  \qquad\text{and, by Lemma \ref{lem:cubic}(iv),}\qquad
  \eta \;:=\; \legendre{d} \;=\; \legendre{3\rho} \;=\; \legendre{-2AB}.
\]
This $\eta$ is the invariant of Definition \ref{def:reduction}: the node is
split if $\eta=1$ and non-split if $\eta=-1$.

Only $\min(m,k)$ will matter, and $\min(m,k)=v_p\bigl((\Delta\bmod p^k)+p^k\bigr)$,
which incidentally avoids $v_p(0)$ when $\Delta=0$.

\subsection{Closed form}

\begin{theorem}\label{thm:node}
Let $p\ge5$, $p\mid\Delta$, $p\nmid A$, and let $m=v_p(\Delta)\in\{1,2,\dots\}\cup\{\infty\}$
and $\eta=\legendre{-2AB}\in\{\pm1\}$. Then for every $k\ge1$,
\[
  \eta=+1:\qquad
  M_k=\begin{cases}
    p^{\,k-1}\bigl((k-1)p-(k-2)\bigr), & k\le m,\\[2pt]
    p^{\,k-1}(m-1)(p-1),               & k>m;
  \end{cases}
\]
\[
  \eta=-1:\qquad
  M_k=\begin{cases}
    p^{\,2\lfloor k/2\rfloor}, & k\le m,\\[2pt]
    0,                         & k>m,\ m \text{ odd},\\[2pt]
    p^{\,k-1}(p+1),            & k>m,\ m \text{ even}.
  \end{cases}
\]
\end{theorem}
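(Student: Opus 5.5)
The plan is to move the singular point to the origin, turn the local equation into a norm form $Y^2-dW^2=a_0$ by an analytic change of variable, and then count solutions of that norm form modulo $p^k$ by $p$-adic valuations. In the split case I will factor the form; in the non-split case I will use Lemma \ref{lem:normform}(b).

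\emph{Reduction to a norm form.} With $x=\tilde\rho+u$, \eqref{eq:shift} turns the equation into $y^2=u^3+du^2+a_0$. The points reducing to $(\rho,0)$ are exactly the pairs $(u,y)$ modulo $p^k$ with $p\mid u$ and $p\mid y$, so $M_k$ counts these pairs with $y^2-u^2(d+u)\equiv a_0\pmod{p^k}$.

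For $p\mid u$ write $d+u=d(1+u/d)$. By Corollary \ref{cor:hensel1}(a), $1+u/d$ has a unique square root $\sigma(u)\equiv1\pmod p$. Put $w=u\,\sigma(u)$, so that $u^2(d+u)=dw^2$.

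I must check that $u\mapsto w$ induces a bijection of $p\Z/p^k\Z$ for every $k$. My first attempt is to show that $w(u)-w(u')=(u-u')\cdot(\text{unit})$ for $u,u'\in p\Zp$. One route is the power series $\sigma(u)=\sum\binom{1/2}{n}(u/d)^n$, which converges since $p\ne2$. An alternative avoiding series is to note $w^2-w'^2=(u-u')\,(\text{unit})$ from $d(w^2-w'^2)=u^2(d+u)-u'^2(d+u')$, and that $w+w'$ has valuation at least $1$. This step, making the substitution rigorous modulo $p^k$, is the main technical obstacle. Everything after it is counting.

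So $M_k$ equals the number of pairs $(y,w)\in(p\Z/p^k\Z)^2$ with $y^2-dw^2\equiv a_0\pmod{p^k}$, where $v_p(a_0)=m$ (infinite if $\Delta=0$), and $\chi(d)=\eta$.

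\emph{Split case $\eta=1$.} Apply Lemma \ref{lem:normform}(a). The map $(y,w)\mapsto(s,t)=(y-\delta w,y+\delta w)$ preserves the condition that both coordinates are divisible by $p$, so we count $(s,t)\in(p\Z/p^k)^2$ with $st\equiv a_0$.

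If $k\le m$, the condition is $v_p(s)+v_p(t)\ge k$. I sum over $v_p(s)=i\in[1,k]$, counting residues $s$ modulo $p^k$ of exact valuation $i$ (with $i=k$ meaning $s=0$), and for each such $s$ the residues $t$ with $p^{\max(1,k-i)}\mid t$. Summing should give $p^{k-1}((k-1)p-(k-2))$; the case $k=1$ gives $1$ as a check.

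If $k>m$, we need $v_p(s)+v_p(t)=m$ with both valuations at least $1$, so $i=v_p(s)$ ranges over $1,\dots,m-1$. There are $p^{k-i-1}(p-1)$ residues $s$ of valuation $i$, and for each, $t$ is determined modulo $p^{k-i}$, giving $p^{i}$ residues modulo $p^k$. Each $i$ therefore contributes $p^{k-1}(p-1)$, and the total is $(m-1)p^{k-1}(p-1)$. For $m=1$ this is $0$, which is consistent.

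\emph{Non-split case $\eta=-1$.} By Lemma \ref{lem:normform}(b), $v_p(y^2-dw^2)=2\min(v_p(y),v_p(w))$.

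If $k\le m$, the condition is $p^{\lceil k/2\rceil}$ dividing both $y$ and $w$; this automatically gives divisibility by $p$. There are $p^{2\lfloor k/2\rfloor}$ such pairs.

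If $k>m$, we need $2\min(v_p(y),v_p(w))=m$, which is impossible for odd $m$. For $m=2t$, apply Lemma \ref{lem:descent} with $(a,b,c)=(t,t,2t)$; note $t\ge1$. This reduces to $Y^2-dW^2\equiv a_0/p^{2t}$ modulo $p^{k-2t}$ with a unit right-hand side. Lemma \ref{lem:conic} gives $p+1$ solutions modulo $p$. All of them have $(Y,W)\not\equiv(0,0)$, so the gradient $(2Y,-2dW)$ is nonzero, and Lemma \ref{lem:hensel} gives $p^{k-2t-1}$ lifts of each. The total is $p^{2t}(p+1)p^{k-2t-1}=p^{k-1}(p+1)$.

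Finally, $m=\infty$ falls under the cases $k\le m$.
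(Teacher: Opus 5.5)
Your proposal is correct and is essentially the paper's second proof of Theorem~\ref{thm:node}: your map $w=u\sigma(u)$ is Lemma~\ref{lem:sqrtmap} with $u=pz$, followed by factoring in the split case and Lemma~\ref{lem:normform}(b) in the non-split case; the only difference is that you count directly on $(p\Z/p^k\Z)^2$ with a single descent $(t,t,2t)$, instead of first passing to $C_{k-2}(a_0')$. For the bijection, rely on the power series and drop your series-free alternative, which is wrong as stated because $d(w^2-w'^2)=(u-u')\bigl(d(u+u')+u^2+uu'+u'^2\bigr)$ has a non-unit second factor; the correct series-free argument (as in Lemma~\ref{lem:sqrtmap}) is $\sigma(u)-\sigma(u')=(u-u')/\bigl(d(\sigma(u)+\sigma(u'))\bigr)$, which gives $w-w'=(u-u')\bigl(\sigma(u)+\tfrac{u'/d}{\sigma(u)+\sigma(u')}\bigr)$, and the second factor is a unit congruent to $1$ modulo $p$.
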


In particular $m=1$ gives $M_k=0$ for all $k\ge2$: the singular point does not
lift at all. Together with \eqref{eq:split} and Theorem \ref{thm:ztopz} this
determines $N_k$.

\subsection{Proof of Theorem \ref{thm:node}}

Fix a representative $\tilde\rho$ modulo $p^k$. By \eqref{eq:shift},
\[
  M_k=\#\bigl\{(u,y) \bmod p^k \;:\; p\mid u,\ p\mid y,\
      y^2\equiv u^3+du^2+a_0 \pmod{p^k}\bigr\}.
\]
For $k=1$ only $(0,0)$ occurs, so $M_1=1$. Let $k\ge2$ and write $u=pU$,
$y=pY$ with $U,Y$ modulo $p^{k-1}$. The congruence becomes
\[
  p^2\bigl(Y^2-dU^2-pU^3\bigr)\;\equiv\;a_0 \pmod{p^k}.
\]
If $m=1$ this is impossible, so $M_k=0$. If $m\ge2$, write $a_0=p^2a_0'$; the
condition reads $Q(U,Y)\equiv a_0'\pmod{p^{k-2}}$ with
$Q(U,Y)=Y^2-dU^2-pU^3$, and involves $(U,Y)$ only modulo $p^{k-2}$. Hence,
by Lemma \ref{lem:descent} with $(a,b,c)=(1,1,2)$,
\begin{equation}\label{eq:MC}
  M_k \;=\; p^2\,C_{k-2}(a_0'),
  \qquad
  C_j(c):=\#\bigl\{(U,Y)\bmod p^j : Q(U,Y)\equiv c \pmod{p^j}\bigr\},\quad C_0:=1,
\end{equation}
and $v_p(a_0')=m-2$.

\smallskip
\noindent\emph{Recursion for $C_j$.} Let $j\ge1$ and $\nu=v_p(c)$; put
$s=(p-1)(1+\eta)$. Split the count by $(U,Y)\bmod p$.

\emph{(i) $(U,Y)\not\equiv(0,0)$.} Here
$\nabla Q\equiv(-2dU,\,2Y)\not\equiv(0,0)\pmod p$, since $d$ is a unit and $p$
is odd. Lemma \ref{lem:hensel}, applied to $Q-c$, gives $p^{j-1}$
solutions modulo $p^j$ above each such solution modulo $p$. Modulo $p$ we have
$Q\equiv Y^2-dU^2$, and the number of its solutions with $(U,Y)\ne(0,0)$ is
\[
  p-\eta \quad(\nu=0),
  \qquad\qquad
  s=(p-1)(1+\eta) \quad(\nu\ge1).
\]
Both are Lemma \ref{lem:conic}: for $\nu=0$ the right-hand side is a
nonzero constant; for $\nu\ge1$ the conic is $Y^2=dU^2$, with
$1+(p-1)(1+\eta)$ solutions, and discarding the origin leaves $s$.

\emph{(ii) $(U,Y)\equiv(0,0)$.} Put $U=pU'$, $Y=pY'$ with $U',Y'$ modulo
$p^{j-1}$; then $Q=p^2\bigl(Y'^2-dU'^2-p^2U'^3\bigr)$. For $j\le2$ the left side
vanishes modulo $p^j$, so the condition is $p^j\mid c$ and then all
$p^{2(j-1)}$ pairs qualify. For $j\ge3$ the condition is $p^2\mid c$ together
with $Y'^2-dU'^2-p^2U'^3\equiv c/p^2 \pmod{p^{j-2}}$, giving $p^2$ times the
corresponding count (Lemma \ref{lem:descent} with $(a,b,c)=(1,1,2)$). That count is again $C_{j-2}$: the cubic term and its
gradient vanish modulo $p$ at every stage, so replacing $pU^3$ by $p^{e}U^3$
with $e\ge1$ changes neither (i) nor (ii), and induction on $j$ gives the same
function.

Collecting, for $j\ge1$:
\begin{equation}\label{eq:rec}
  C_j(\nu)=
  \begin{cases}
    p^{j-1}(p-\eta), & \nu=0,\\
    s+1, & \nu\ge1,\ j=1,\\
    p^{j-1}s, & \nu=1,\ j\ge2,\\
    p^{j-1}s+p^2, & \nu\ge2,\ j=2,\\
    p^{j-1}s+p^2\,C_{j-2}(\nu-2), & \nu\ge2,\ j\ge3.
  \end{cases}
\end{equation}
Only $\min(\nu,j)$ occurs, so $C_j(\nu)$ depends only on $\min(\nu,j)$.

\smallskip
\noindent\emph{Case $\eta=+1$, $s=2(p-1)$.} For $\nu\ge j$ set
$c_j=C_j(\nu)$. Then $c_1=2p-1$, $c_2=p(3p-2)$, and
$c_j=p^{j-1}s+p^2c_{j-2}$; induction gives
$c_j=p^{j-1}\bigl((j+1)p-j\bigr)$. For $0\le\nu<j$, induction on $\nu$ using
\eqref{eq:rec} gives $C_j(\nu)=p^{j-1}(\nu+1)(p-1)$: the cases $\nu=0,1$ are
immediate, and for $\nu\ge2$
\[
  p^{j-1}s+p^2\cdot p^{j-3}(\nu-1)(p-1)=p^{j-1}(p-1)\bigl(2+\nu-1\bigr).
\]
Now \eqref{eq:MC} with $j=k-2$, $\nu=m-2$ gives, for $k\le m$,
$M_k=p^2p^{k-3}\bigl((k-1)p-(k-2)\bigr)$, and for $k>m$,
$M_k=p^2p^{k-3}(m-1)(p-1)$. Both formulas also hold at $k=1,2$ and cover
$m=1$.

\smallskip
\noindent\emph{Case $\eta=-1$, $s=0$.} For $\nu\ge j$, \eqref{eq:rec} gives
$c_1=1$, $c_2=p^2$, $c_j=p^2c_{j-2}$, so $c_j=p^{2\lfloor j/2\rfloor}$. For
$\nu<j$: $C_j(0)=p^{j-1}(p+1)$, $C_j(1)=0$, and $C_j(\nu)=p^2C_{j-2}(\nu-2)$
for $\nu\ge2$, so $C_j(\nu)=p^{j-1}(p+1)$ for $\nu$ even and $0$ for $\nu$ odd.
Substituting in \eqref{eq:MC} yields the stated formulas, using
$2+2\lfloor(k-2)/2\rfloor=2\lfloor k/2\rfloor$. \qed

\subsection{Second proof of Theorem \ref{thm:node}}

 The proof above evaluates the recursion \eqref{eq:rec}. The
formulas can also be read off directly from the quadratic form
$Y^2-dU^2$, using Lemma \ref{lem:normform} and Lemma \ref{lem:sqrtmap} of
Section \ref{sec:cusp}, whose proof does not depend on the present
section. Recall from \eqref{eq:MC} that $M_1=1$, $M_k=0$ for $k\ge2$ if
$m=1$, and otherwise $M_k=p^2C_{k-2}(a_0')$ with $\vp(a_0')=m-2$, where
\[
 C_j(b)=\#\bigl\{(U,Y)\bmod p^j:\;Y^2-dU^2-pU^3\equiv b\pmod{p^j}\bigr\},
 \qquad C_0=1 .
\]
Lemma \ref{lem:sqrtmap} replaces $dU^2+pU^3$ by $dW^2$ through a bijection
$U\mapsto W$ modulo $p^j$, so
$C_j(b)=\#\{(W,Y)\bmod p^j:\;Y^2-dW^2\equiv b\}$. For $j\ge1$ and
$\nu=\vp(b)$ the counts are
\[
\begin{array}{c|cc}
 &\nu\ge j&\nu<j\\ \hline
\eta=1&p^{j-1}((j+1)p-j)&p^{j-1}(\nu+1)(p-1)\\[2pt]
\eta=-1&p^{2\lfloor j/2\rfloor}&
 \begin{cases}p^{j-1}(p+1),&\nu\text{ even},\\0,&\nu\text{ odd}.
 \end{cases}
\end{array}
\]
For $\eta=1$, factor as $(Y-\sqrt d\,W)(Y+\sqrt d\,W)=b$; the linear
change is a bijection modulo $p^j$ by Lemma \ref{lem:normform}(a), so
$C_j(b)$ is the number of pairs $(s,t)$ modulo $p^j$ with $st\equiv b$.
When $\nu<j$, each possible valuation $0,\ldots,\nu$ of $s$ contributes
$(p-1)p^{j-1}$ pairs: $s$ has $(p-1)p^{j-1-\vp(s)}$ choices, and then $t$
is determined modulo $p^{j-\vp(s)}$, that is, has $p^{\vp(s)}$ choices.
When $\nu\ge j$, the valuations $0,\ldots,j-1$ of $s$ each contribute
$(p-1)p^{j-1}$ in the same way, and $s\equiv0$ contributes $p^j$. This
gives the split column.

For $\eta=-1$, $\vp(Y^2-dW^2)=2\min(\vp(Y),\vp(W))$ by Lemma
\ref{lem:normform}(b), and a nonzero right-hand side modulo $p$ has $p+1$
solutions by Lemma \ref{lem:conic}. Lemma \ref{lem:hensel} and Lemma
\ref{lem:descent} with $(a,b,c)=(1,1,2)$ then give the non-split column,
including the count for $b\equiv0$. Substituting the table in
$M_k=p^2C_{k-2}(a_0')$ gives the formulas of Theorem \ref{thm:node};
$k=1,2$ are checked directly.

\section{The valuation as an arithmetic term}\label{sec:valuation}

The nodal and cuspidal formulas involve $p$-adic valuations.
We give an arithmetic term for $v_p(n)$, valid for every prime $p$
and every integer $n\ge1$.

\begin{proposition}\label{prop:vp}
For $n\ge1$ and a prime $p\ge2$,
\[
  v_p(n)\;=\;\left\lfloor
    \frac{\gcd(n,p^n)^{\,n+1} \bmod \bigl(p^{\,n+1}-1\bigr)^2}{p^{\,n+1}-1}
  \right\rfloor .
\]
\end{proposition}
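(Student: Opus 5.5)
First identify the gcd. Put $v=v_p(n)$. Since $p^v\le n<p^n$, we have $v<n$, so $p^v$ divides $p^n$ and $\gcd(n,p^n)=p^{v}$ exactly. This holds because $p$ is prime: $p^n$ has no divisors other than powers of $p$. Consequently
\[
g:=\gcd(n,p^n)^{n+1}=p^{v(n+1)}=Q^{v},\qquad Q:=p^{n+1}.
\]
It then remains to show that $\lfloor (Q^v \bmod (Q-1)^2)/(Q-1)\rfloor=v$ for every $v$ with $0\le v<n$.

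The key step is to expand $Q^v$ around $Q=1$. Write $Q=1+D$ with $D=Q-1=p^{n+1}-1$. The binomial theorem gives
\[
Q^v=(1+D)^v\equiv 1+vD \pmod{D^2}.
\]
So $Q^v\bmod D^2$ equals $1+vD$, provided $0\le 1+vD<D^2$. The lower bound is trivial. The upper bound needs $1+vD<D^2$, which follows from $v+1\le n<D$. For $D>n$, note that $p^{n+1}-1\ge 2^{n+1}-1>n$ for all $n\ge1$. With this bound in hand, $\lfloor(1+vD)/D\rfloor=v$, because $1<D$ (as $D\ge3$).

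The only point needing care is to confirm that the residue really is $1+vD$ and not a wrapped value; this is exactly the inequality $v<D-1$, which is the main check. One should also cover the case $v=0$: then $g=1$, the remainder is $1$, and the floor is $0$, consistent with the general formula. The same argument is the paper's $\nu_2$ term with $p=2$, so the proposition generalizes that formula verbatim.
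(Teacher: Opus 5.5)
Your proof is correct and is essentially the paper's argument: identify $\gcd(n,p^n)=p^{v}$, expand $(1+(p^{n+1}-1))^{v}\equiv 1+v(p^{n+1}-1)\pmod{(p^{n+1}-1)^2}$, rule out wrap-around (the paper uses $v\le n\le p^{n+1}-2$, and you use the sharper $v<n$), and then take the floor. One cosmetic point: the no-wrap condition $1+vD<D^2$ is equivalent to $v\le D-1$, so $v<D-1$ is a sufficient condition rather than ``exactly'' the condition, but this does not affect your argument.
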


\begin{proof}
Let $v=v_p(n)$, so $\gcd(n,p^n)=p^{v}$ and $v\le n$. With $b=p^{\,n+1}$ we get
$\gcd(n,p^n)^{\,n+1}=b^{v}=(1+(b-1))^{v}\equiv1+v(b-1)\pmod{(b-1)^2}$, and
$0\le 1+v(b-1)<(b-1)^2$ because $v\le n\le b-2$. Dividing by $b-1$ and taking
the floor returns $v$.
\end{proof}

The primality of $p$ is essential: the proof uses $\gcd(n,p^n)=p^{\vp(n)}$,
which fails for composite bases. For $C\ge0$ and $L\ge0$ the capped
valuation has the representation with positive input
\begin{equation}\label{eq:cap}
 \min(\vp(C),L)=\vp\bigl((C\bmod p^L)+p^L\bigr):
\end{equation}
if $\vp(C)<L$, adding $p^L$ does not change the valuation of $C\bmod p^L$,
and otherwise the input is exactly $p^L$. In particular $\vp(0)$ never
occurs.

The greatest common divisor is an arithmetic term
\cite{Mazzanti2002plainbases,prunescu2024numbertheoreticfunctions}.
Euler's criterion encodes the Legendre symbol by the residue
$a^{(p-1)/2}\bmod p$, as described in Section~\ref{sec:facts-zpk}.
The indicators $1\tsub(\Delta\bmod p)$ and $1\tsub(A\bmod p)$ select
the reduction type. Together with the capped valuation and the
equality, inequality and parity indicators, they express the
good-reduction and nodal formulas as arithmetic terms.

\section{Bad reduction: the cusp}\label{sec:cusp}
Fix a prime $p\ge5$. For $A,B\in\Z$ and $k\ge1$, put
\[
 N_k(A,B)=\#\{(x,y)\bmod p^k:y^2\equiv x^3+Ax+B\pmod{p^k}\},
 \qquad N_0(A,B)=1.
\]

Write $\Delta=4A^3+27B^2$, $\chi(u)=\legendre{u}$, and $\vp(0)=\infty$.
The formulas include nonminimal equations and $\Delta=0$.
For the arithmetic-term statement in Corollary~\ref{cor:term}, we
restrict the coefficients to $A,B\ge0$.

Assume henceforth in the cusp case that $p\mid A,B$. By Definition
\ref{def:reduction} the reduction is $y^2=x^3$, with the $p-1$ non-singular
points $(t^2,t^3)$, $t\in\Fp^{\times}$, and the singular point $(0,0)$.
If $M_k$ denotes the count above this point, then by Corollary
\ref{cor:structure}
\begin{equation}\label{eq:split1}
 N_k(A,B)=(p-1)p^{k-1}+M_k,\qquad M_1=1.
\end{equation}

\subsection{The complete cusp table}\label{sec:table}
One auxiliary count is needed. For $a,b\in\Z$ not both divisible by $p$, set
\begin{equation}\label{eq:Kdef}
 K_\ell(a,b)=\#\{(X,Y)\bmod p^\ell:X^3+aX+b\equiv pY^2\pmod{p^\ell}\},
 \qquad K_0=1.
\end{equation}
Theorem~\ref{thm:K} expresses $K_\ell(a,b)$ in terms of the
roots of $X^3+aX+b$ modulo $p$, the valuation of its discriminant and
a quadratic character.

\begin{theorem}[cuspidal reduction]\label{thm:cusp}
Let $\alpha=\vp(A)\ge1$ and $\beta=\vp(B)\ge1$. Exactly one row of the
following table applies. In that row,
\[
 M_1=1,\qquad M_k=p^k\quad(2\le k<h),
\]
and the last column gives $M_k$ for every $k\ge h$.
\begin{center}
\small
\renewcommand{\arraystretch}{1.55}
\begin{tabular}{@{}c p{6.0cm} c l@{}}
\toprule
Row & Conditions & $h$ & $M_k$ for $k\ge h$\\
\midrule
1 & $\beta=1$ & 2 & $0$\\
2 & $\alpha=1,\ \beta\ge2$ & 2 & $p^k$\\
3 & $\alpha\ge2,\ \beta=2$ & 3 & $(1+\chi(B/p^2))p^k$\\
4 & $\alpha\ge2,\ \beta\ge3$, and ($\alpha=2$ or $\beta=3$) & 4
  & $p^3K_{k-3}(A/p^2,B/p^3)$\\
5 & $\alpha\ge3,\ \beta=4$ & 5 & $(1+\chi(B/p^4))p^k$\\
6 & $\alpha=3,\ \beta\ge5$ & 5 & $p^k$\\
7 & $\alpha\ge4,\ \beta=5$ & 6 & $0$\\
8 & $\alpha\ge4,\ \beta\ge6$ & 6 & $p^7N_{k-6}(A/p^4,B/p^6)$\\
\bottomrule
\end{tabular}
\end{center}
The single recurrence in row 8 is removed by Theorem~\ref{thm:compression}.
\end{theorem}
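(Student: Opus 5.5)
Throughout, we use $M_k$ for the count of solutions modulo $p^k$ with $p\mid x$ and $p\mid y$; the plan is a case analysis driven by successive applications of Lemma~\ref{lem:descent}, each step forcing higher divisibility of $x$ and $y$ by reading the equation modulo a small power of $p$. Write $A=p^\alpha A_1$ and $B=p^\beta B_1$ with units $A_1,B_1$ (or zero). First, with $x=pX$ and $y=pY$, the congruence $p^2Y^2\equiv p^3X^3+pAX+B\pmod{p^k}$ for $k\ge2$ forces $p^2\mid B$; hence for $\beta=1$ we get $M_k=0$ for $k\ge2$ (row 1). The exceptional range is handled uniformly: for $k<h$ every condition obtained along the descent is vacuous, so all $p^{k}$ pairs with $p\mid x$, $p\mid y$ are counted (there are $p^{2(k-1)}$ of them). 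That is not $p^k$ in general, so in checking this we should compare with the statement: with $k<h$, every pair satisfying the divisibilities forced up to that stage works, and the count is $p^{2c-a-b}$ from Lemma~\ref{lem:descent} with $k=c$. For instance, at $(a,b,c)=(1,1,2)$ and $k=2$ this gives $p^{2}=p^k$. The same check is repeated at each stage.

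\emph{Row 2} ($\alpha=1$, $\beta\ge2$). After dividing by $p^2$ we have $Y^2\equiv pX^3+A_1X+B/p^2 \pmod{p^{k-2}}$. The partial derivative with respect to $X$ is the unit $A_1$, so by Lemma~\ref{lem:hensel1} (solving for $X$ given $Y$) each $Y$ modulo $p^{k-2}$ gives exactly one $X$. This yields $p^{k-2}$ solutions, and Lemma~\ref{lem:descent} then gives $M_k=p^2p^{k-2}=p^k$.

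\emph{Rows with $\alpha\ge2$.} Reducing the congruence from the first step modulo $p^3$ gives $p^2Y^2\equiv B$, so $p\mid Y$ is forced when $\beta\ge3$. If $\beta=2$, the equation becomes $Y^2\equiv B/p^2+p(\dots)$ with a unit right-hand side; Corollary~\ref{cor:hensel1}(a) and Lemma~\ref{lem:hensel} in $Y$ give $(1+\chi(B/p^2))p^{k-2}$, which is row 3. If $\beta\ge3$, set $y=p^2Y'$ and apply Lemma~\ref{lem:descent} with $(1,2,3)$; this gives $X^3+(A/p^2)X+B/p^3\equiv pY'^2\pmod{p^{k-3}}$, which is exactly $K_{k-3}(A/p^2,B/p^3)$ when $A/p^2$ and $B/p^3$ are not both divisible by $p$, i.e.\ row 4. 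Otherwise ($\alpha\ge3$, $\beta\ge4$), the equation forces $p\mid X$, so we substitute $x=p^2X''$ with $(2,2,4)$ and obtain $Y'^2\equiv p^2X''^3+(A/p^3)X''+B/p^4$. Then $\beta=4$ gives a unit square condition (row 5). The case $\alpha=3$, $\beta\ge5$ gives a unit linear coefficient, so we solve for $X''$ as in row 2 (row 6). For $\alpha\ge4$ and $\beta\ge5$, $p\mid Y'$ is forced; applying $(2,3,5)$ with $\beta=5$ gives a unit constant with nothing on the left, so $M=0$ (row 7). For $\beta\ge6$, applying $(2,3,6)$ yields exactly the equation with coefficients $(A/p^4,B/p^6)$ modulo $p^{k-6}$ with no restriction on the variables, and hence the factor $p^{2\cdot6-2-3}=p^7$ times $N_{k-6}$ (row 8).

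The main obstacle is bookkeeping rather than ideas. In each row we must verify that the divisibility claimed is genuinely forced once $k\ge h$, so that the ``forces'' clause of Lemma~\ref{lem:descent} applies. We must also check the exponent $2c-a-b$ combined with the Hensel count (for example, $p^{7}$ in row 8 versus $p^{3}$ in row 4). Finally, we must confirm the uniform statement $M_k=p^k$ for $2\le k<h$, which requires checking, at the intermediate values of $k$, that the vacuous stage gives exactly $p^k$ pairs; this is the step we will verify row by row. Exclusivity and exhaustiveness of the rows follow from the nested case split on $(\alpha,\beta)$ above.
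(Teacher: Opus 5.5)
Your architecture is the paper's: the same chain of descents $(1,1,2)$, $(1,2,3)$, $(2,2,4)$, $(2,3,5)$, $(2,3,6)$ through Lemma~\ref{lem:descent}, with Hensel at each terminal stage. The third descent, however, contains an error that breaks your argument for row 6. Substituting $x=p^2X''$, $y=p^2Y'$ into $y^2=x^3+Ax+B$ and dividing by $p^4$ gives
\[
Y'^2\equiv p^2X''^3+(A/p^2)\,X''+B/p^4 \pmod{p^{k-4}},
\]
with linear coefficient $A/p^2$, not $A/p^3$. For $\alpha=3$ this coefficient has valuation $1$, so there is no unit derivative in $X''$ at this stage. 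With $\beta\ge5$ the reduction modulo $p$ is $Y'^2\equiv0$, and every point of it is singular. Hence neither Lemma~\ref{lem:hensel1} nor Lemma~\ref{lem:hensel} applies, and ``solve for $X''$ as in row 2'' fails.

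The same slip makes your row 5 argument inconsistent with your own displayed equation. The ``unit square condition'' holds only because the true coefficient $A/p^2$ is divisible by $p$ for every $\alpha\ge3$. With $A/p^3$ it would fail for $\alpha=3$.

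The repair is to note that $p\mid Y'$ is forced for all $\alpha\ge3$, $\beta\ge5$, not only for $\alpha\ge4$. Then apply $(2,3,5)$ to reach
\[
pY^2\equiv pX^3+(A/p^3)X+B/p^5\pmod{p^{k-5}}.
\]
For $\alpha=3$ the $X$-derivative $A/p^3+3pX^2$ is a unit, so each $Y$ modulo $p^{k-5}$ determines exactly one $X$. This gives $M_k=p^5\cdot p^{k-5}=p^k$ for $k\ge5$, while $M_4=p^4$ comes from the vacuous stage. Your row 6 entry is right only because $p^4\cdot p^{k-4}$ and $p^5\cdot p^{k-5}$ happen to coincide.

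Your paragraph on the range $2\le k<h$ also needs correcting. Its first sentence says all $p^{2(k-1)}$ pairs with $p\mid x$, $p\mid y$ qualify; this is false for $k\ge3$. For example, at $k=3$ in rows 4--8 the congruence already forces $p^2\mid y$. What is true, and what you should state, is this: at level $k=c$ the divisibilities of the current stage are already forced, and its residual congruence is vacuous. Hence $M_c=p^{2c-a-b}=p^c$, because $a+b=c$ at each of the four stages $(1,1,2)$, $(1,2,3)$, $(2,2,4)$, $(2,3,5)$. These are exactly the values $M_2,\dots,M_5$ recorded in the paper's proof.
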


\subsection{Proof of the cusp table}
Every substitution below is an instance of Lemma \ref{lem:descent}; the
exponents $(a,b,c)$ are indicated, and the factor $p^{2c-a-b}$ counts the
unused higher digits. All counts of an auxiliary equation modulo $p^j$ use both of its
variables modulo $p^j$.

\textit{First descent: rows 1--3.}
Above $(0,0)$, write $x=pX$, $y=pY$. For $k\ge2$ the equation is
\[
 p^2Y^2\equiv p^3X^3+pAX+B\pmod{p^k}.
\]
If $\beta=1$, this is impossible, proving row 1. Otherwise division by $p^2$
gives
\begin{equation}\label{eq:H1}
 Y^2\equiv pX^3+(A/p)X+B/p^2\pmod{p^{k-2}}.
\end{equation}
By Lemma \ref{lem:descent} with $(a,b,c)=(1,1,2)$, each solution of \eqref{eq:H1} has $p^2$ choices of the unused higher digits.
In particular $M_2=p^2$. If $\alpha=1$, the derivative in $X$ is a unit,
and modulo $p$ there are exactly $p$ solutions, one $X$ for each $Y$.
Consequently $M_k=p^2p^{k-2}=p^k$, proving row 2.
If $\alpha\ge2$ and $\beta=2$, the reduction of \eqref{eq:H1} is
$Y^2=B/p^2\ne0$. It has $p(1+\chi(B/p^2))$ smooth points. Lemma \ref{lem:hensel}
proves row 3, including the possibility of no points.

\textit{Second descent: the residual cubic.}
Suppose $\alpha\ge2$, $\beta\ge3$, and $k\ge3$. Equation~\eqref{eq:H1}
forces its $Y$ to be divisible by $p$. Thus, in original coordinates,
$x=pX$, $y=p^2Y$, and division by $p^3$ gives
\begin{equation}\label{eq:Kstage}
 X^3+(A/p^2)X+B/p^3\equiv pY^2\pmod{p^{k-3}}.
\end{equation}
This is Lemma \ref{lem:descent} with $(a,b,c)=(1,2,3)$: the multiplicity
over the modulus $p^{k-3}$ is $p^{6-3}=p^3$.
At $k=3$ the condition is vacuous, hence $M_3=p^3$.
If $\alpha=2$ or $\beta=3$, the reduced cubic does not have a triple root,
by Lemma \ref{lem:cubic}(iii). This proves row 4 using \eqref{eq:Kdef}.

\textit{Third descent: row 5.}
The remaining case is $\alpha\ge3$, $\beta\ge4$. For $k\ge4$,
\eqref{eq:Kstage} forces $X\equiv0\pmod p$. Hence $x=p^2X$, $y=p^2Y$,
and division by $p^4$ gives
\begin{equation}\label{eq:H2}
 Y^2\equiv p^2X^3+(A/p^2)X+B/p^4\pmod{p^{k-4}}.
\end{equation}
Lemma \ref{lem:descent} with $(a,b,c)=(2,2,4)$ gives the multiplicity
$p^4$, so $M_4=p^4$.
If $\beta=4$, its reduction is $Y^2=B/p^4\ne0$. Lemma \ref{lem:hensel} gives
row 5 exactly as in row 3.

\textit{Fourth descent: rows 6--8.}
Now assume $\alpha\ge3$, $\beta\ge5$. For $k\ge5$, \eqref{eq:H2} forces
$Y\equiv0\pmod p$, so the original coordinates satisfy
$x=p^2X$, $y=p^3Y$. Dividing the original equation by $p^5$ gives
\begin{equation}\label{eq:finalstage}
 pY^2\equiv pX^3+(A/p^3)X+B/p^5\pmod{p^{k-5}}.
\end{equation}
Its multiplicity is $p^5$, by Lemma \ref{lem:descent} with
$(a,b,c)=(2,3,5)$, and $M_5=p^5$.
If $\alpha=3$, the derivative in $X$ is a unit and the reduced equation
has $p$ solutions, proving row 6. If $\alpha\ge4$, $\beta=5$, the reduced
equation is a nonzero constant equal to zero, proving row 7 for $k\ge6$.
Finally, if $\alpha\ge4$, $\beta\ge6$, every coefficient in
\eqref{eq:finalstage} is divisible by $p$. For $k\ge6$ division by $p$
leaves the original Weierstra\ss{} equation with parameters $A/p^4,B/p^6$
and modulus $p^{k-6}$. Altogether this is Lemma \ref{lem:descent} with
$(a,b,c)=(2,3,6)$, of multiplicity $p^7$. Therefore
\[
 M_k=p^7N_{k-6}(A/p^4,B/p^6),
\]
including $k=6$ by $N_0=1$. This proves all rows and their stated initial
values.\hfill$\square$

\subsection{Closing the repeated-root cubic branch}\label{sec:ramified}
\begin{theorem}[ramified cubic count]\label{thm:K}
Suppose $p\nmid a$ or $p\nmid b$, and put
\[
 D=4a^3+27b^2,\qquad R=\#\{x\in\Fp:x^3+ax+b=0\}.
\]
For $\ell\ge1$, if $p\nmid D$, then
\begin{equation}\label{eq:Ksimple}
 K_\ell(a,b)=R p^\ell.
\end{equation}
If $p\mid D$, then $p\nmid ab$. Put $m=\vp(D)\ge1$, allowing $m=\infty$.
Then
\begin{equation}\label{eq:Kdouble}
 K_\ell(a,b)=
 \begin{cases}
  2p^\ell,&\ell\le m,\\
  (2+\varepsilon)p^\ell,&\ell>m,
 \end{cases}
\end{equation}
where, only in the second case, set $D_0=D/p^m$ and
\begin{equation}\label{eq:eps}
 \varepsilon=\chi\!\left(-D_0(2ab)^{m\bmod2}\right)\in\{-1,1\}.
\end{equation}
If $D=0$, only the first case is used. As before, $K_0=1$.
\end{theorem}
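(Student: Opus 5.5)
Write $g(X)=X^3+aX+b$ and $G(X,Y)=g(X)-pY^2$, so that $K_\ell(a,b)$ counts the zeros of $G$ modulo $p^\ell$. The plan is to group these zeros by their reduction modulo $p$. Modulo $p$ the equation is $g(X)\equiv0$ with $Y$ arbitrary, so the residue points are $(x_0,y_0)$ with $x_0$ a root of $g$ modulo $p$. The gradient of $G$ is $(g'(X),-2pY)\equiv(g'(X),0)$. At a simple root $x_0$ the gradient is nonzero modulo $p$, so Lemma~\ref{lem:hensel} gives $p^{\ell-1}$ lifts of each of the $p$ points $(x_0,y_0)$, that is, $p^\ell$ solutions per simple root.

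\textbf{Case $p\nmid D$.} All $R$ roots are simple by Lemma~\ref{lem:cubic}(i), and \eqref{eq:Ksimple} follows at once.

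\textbf{Case $p\mid D$.} If $p\mid a$, then $p\mid b$ by Lemma~\ref{lem:cubic}(iii), contrary to the hypothesis; so $p\nmid a$, and then $\rho=-3b/(2a)\ne0$ forces $b=2\rho^3\ne0$, hence $p\nmid ab$. Lemma~\ref{lem:cubic}(ii) gives the roots $\rho$ (double) and $-2\rho\ne\rho$ (simple). The simple root contributes $p^\ell$. It remains to show that the solutions above $x\equiv\rho$ number $p^\ell$ for $\ell\le m$ and $(1+\varepsilon)p^\ell$ for $\ell>m$.

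For this I take $\tilde\rho\in\Zp^\times$ from Corollary~\ref{cor:hensel1}(b) and apply Lemma~\ref{lem:shift}. This gives $g(\tilde\rho+u)=u^3+d u^2+c$ with $d=3\tilde\rho$ a unit, $\vp(c)=m$, and $D=27c(4\tilde\rho^3+c)$. The count above $\rho$ is then
\[
\#\{(u,Y)\bmod p^\ell:\ p\mid u,\ u^2(d+u)+c\equiv pY^2\pmod{p^\ell}\}.
\]
Since $d+u$ is a unit, Lemma~\ref{lem:sqrtmap} (the bijection $U\mapsto W$ with $dU^2+U^3=dW^2$, as in the second proof of Theorem~\ref{thm:node}) should replace $u^2(d+u)$ by $dw^2$, with $p\mid w$ still. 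The equation becomes $pY^2-dw^2\equiv c$. I would rather count $N(b')=\#\{(w,Y):\ p\mid w,\ pY^2-dw^2\equiv c\pmod{p^\ell}\}$ using the valuation formula of Lemma~\ref{lem:normform}(c), which gives $\vp(pY^2-dw^2)=\min(1+2\vp(Y),2\vp(w))$, as that is the ramified norm form.

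For $\ell\le m$ the condition is $pY^2\equiv dw^2\pmod{p^\ell}$. I count pairs by the valuation profile via Lemma~\ref{lem:normform}(c). Both terms must be $\equiv0$, i.e. $p^{\lceil(\ell-1)/2\rceil}\mid Y$ and $p^{\lceil \ell/2\rceil}\mid w$, and together these give exactly $p^{\ell-\lceil(\ell-1)/2\rceil}\cdot p^{\ell-\lceil\ell/2\rceil}=p^{\ell}$. This uses $\lceil(\ell-1)/2\rceil+\lceil\ell/2\rceil=\ell$, and the constraint $p\mid w$ is automatic for $\ell\ge1$; a descent argument with Lemma~\ref{lem:descent} handles the remaining pairs.

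For $\ell>m$ the valuation of $c$ is attained. If $m$ is odd, write $m=1+2t$: then $\vp(Y)=t$, $p^{t+1}\mid w$, and after descent the condition is $pY_1^2\equiv c/p^{2t}$ modulo smaller powers, with $Y_1$ a unit. This has $1+\chi(c/p^m)$ solution classes by Lemma~\ref{lem:sqrt}, and the count becomes $(1+\chi(c/p^m))p^\ell$. If $m$ is even, write $m=2t$: then $\vp(w)=t\ge1$ and the condition is $-dw_1^2\equiv c/p^{2t}$, giving $(1+\chi(-dc/p^m))p^\ell$. The multiplicities $p^\ell$ come out of Lemma~\ref{lem:descent} with exponents chosen as in the case $\ell\le m$.

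Finally I must match the signs to $\varepsilon$. One has $D_0=27(4\tilde\rho^3+c)(c/p^m)$ and $4\tilde\rho^3+c\equiv4\tilde\rho^3$, so $\chi(-D_0)=\chi(-3\tilde\rho\cdot c/p^m)=\chi(-d\,c/p^m)$, which is the even case. For odd $m$ the factor $2ab=2(-3\rho^2)(2\rho^3)=-12\rho^5$ is needed: $\chi(-D_0\cdot2ab)=\chi(3\rho\cdot12\rho^5\, c/p^m)=\chi(c/p^m)$, because $36\rho^6$ is a square. This bookkeeping is exactly \eqref{eq:eps}.

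The main obstacle is the exact counting with $p\mid w$ across the boundary $\ell$ versus $m$, and making the descent exponents precise when $\ell-m$ is small. I would verify the small cases $\ell=m+1$ and $\ell=m+2$ directly.
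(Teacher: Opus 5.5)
Your proposal is correct and is essentially the paper's proof. It uses the same shift to $\tilde\rho$, the same removal of the cubic term by Lemma~\ref{lem:sqrtmap}, and the same character bookkeeping. The only difference is that you keep the modulus $p^\ell$ and the form $pY^2-dw^2$ with $p\mid w$, and redo the valuation count of Lemma~\ref{lem:J} inline. This form is $p$ times the paper's $Y^2-pdz^2$ after $w=pz$; the paper instead divides by $p$ and reads off $pJ_{\ell-1}(c/p)$.

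The cases $\ell=m+1,m+2$ that you flag as the main obstacle need no separate check. For $\ell>m$, count $Y$ for each fixed $w$ with $p^{t+1}\mid w$ when $m$ is odd, or $w$ for each fixed $Y$ with $p^t\mid Y$ when $m$ is even. Lemma~\ref{lem:sqrt} then gives $(1+\chi(\cdot))p^\ell$ uniformly in $\ell$. The $w$-term you dropped in the odd case is harmless, because $c+dw^2$ has the same valuation and leading character as $c$.
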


Above the double root, a change of variable reduces the equation
to the ramified quadratic form counted in Lemma~\ref{lem:J}.

\begin{lemma}[removing the higher cubic term]\label{lem:sqrtmap}
For a unit $d\in\Z_p^\times$, the map
\[
 z\longmapsto z\sqrt{1+pz/d},
\]
where the square root is the one congruent to $1$ modulo $p$, induces a
bijection modulo $p^n$ for every $n\ge1$. It transforms
$dz^2+pz^3$ into $dw^2$.
\end{lemma}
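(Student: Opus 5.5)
The plan is to check three things: that the square root is well defined and lands in $\Zp$, that the map carries $dz^2+pz^3$ to $dw^2$, and that reduction modulo $p^n$ yields a bijection.

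\emph{Well-definedness.} For $z\in\Zp$ the element $1+pz/d$ is a unit congruent to $1$ modulo $p$, because $d$ is a unit. Corollary~\ref{cor:hensel1}(a) then gives a unique square root $s(z)\in\Zp$ with $s(z)\equiv1\pmod p$. Put $\phi(z)=z\,s(z)$. Then
\[
d\,\phi(z)^2=dz^2(1+pz/d)=dz^2+pz^3,
\]
which is the transformation claimed.

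\emph{Compatibility with reduction.} To see that $\phi$ induces a map modulo $p^n$, I must show that $z\equiv z'\pmod{p^n}$ implies $\phi(z)\equiv\phi(z')\pmod{p^n}$. It is cleanest to expand $s(z)$ as a power series in $z$: the binomial series
\[
\sqrt{1+t}=\sum_j\binom{1/2}{j}t^j
\]
has coefficients in $\Z[1/2]\subset\Zp$. Substituting $t=pz/d$, the $j$-th term is divisible by $p^j$, so the series converges in $\Zp$. Its sum is congruent to $1$ modulo $p$ and squares to $1+pz/d$, so by uniqueness it equals $s(z)$. Hence $\phi(z)=z+\sum_{j\ge1}c_j p^j z^{j+1}$ with $c_j\in\Zp$. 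Each truncation of this series is a polynomial with $\Zp$-coefficients, and the tail is divisible by $p^n$ for all $z\in\Zp$. Therefore $\phi$ modulo $p^n$ is given by a polynomial $\phi_n\in\Zp[z]$, and it respects congruences modulo $p^n$.

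\emph{Bijectivity.} Since $\Z/p^n\Z$ is finite, it suffices to show that $\phi_n$ is injective. Suppose $\phi(z)\equiv\phi(z')\pmod{p^n}$. Write
\[
\phi(z)-\phi(z')=(z-z')\bigl(1+p\,h(z,z')\bigr)
\]
with $h$ a $p$-adically convergent series with $\Zp$-coefficients. This factorization holds because $z^{j+1}-z'^{j+1}$ is divisible by $z-z'$, and every term beyond the linear one carries a factor of $p$. The factor $1+ph$ is a unit, so $z\equiv z'\pmod{p^n}$. An alternative route to the same conclusion is to note that $\phi_n(z)\equiv z\pmod p$ with derivative $\equiv1\pmod p$, and to apply Lemma~\ref{lem:hensel1} to $\phi_n(z)-w$, which has exactly one root modulo $p^n$ for every $w$. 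That route gives surjectivity directly.

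The only delicate step is making the reduction modulo $p^n$ rigorous, that is, showing that $\phi$ is given modulo $p^n$ by an integral polynomial, or at least that it is $1$-Lipschitz. This requires either the convergence argument above or a direct Hensel-uniqueness argument. The direct argument runs as follows: if $z\equiv z'\pmod{p^n}$, then $1+pz/d\equiv1+pz'/d\pmod{p^{n+1}}$, and since $s(z)+s(z')\equiv2$ is a unit, $s(z)\equiv s(z')$ modulo $p^{n+1}$. I would present this direct version, since it avoids power series entirely.
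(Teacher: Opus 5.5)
Your proof is correct, and its skeleton matches the paper's. Both arguments show that $\phi(z)-\phi(z')=(z-z')\cdot u$ with $u$ a unit congruent to $1$ modulo $p$. This gives at once that $\phi$ is well defined modulo $p^n$ and injective there, and finiteness then gives bijectivity.

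The difference lies in how the unit is produced. The paper never expands $s(z)$. From $s(z)^2-s(z')^2=p(z-z')/d$ and the fact that $s(z)+s(z')\equiv2$ is a unit, it obtains the closed form
\[
 z\,s(z)-z'\,s(z')=(z-z')\Bigl(s(z)+\frac{pz'/d}{s(z)+s(z')}\Bigr),
\]
which settles compatibility and injectivity in one line.

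You obtain the same factor as $1+p\,h(z,z')$ through the binomial series. This costs you two checks: that $\binom{1/2}{j}\in\Z[1/2]$, and that evaluating the formal identity $\bigl(\sum_j\binom{1/2}{j}T^j\bigr)^2=1+T$ at $T=pz/d$ is legitimate. In return you get an explicit polynomial $\phi_n$ representing $\phi$ modulo $p^n$, and with it your Hensel route to surjectivity.

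Your ``direct version'' of the compatibility step is exactly the paper's difference-of-squares argument. If you really drop power series, you must also replace $h$ in the injectivity step, since $h$ is defined by a series. The paper's displayed identity does this, and it follows from that same computation in one line.
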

\begin{proof}
The square root exists uniquely by Corollary \ref{cor:hensel1}(a). It depends only on
$z$ modulo $p^n$. Write it as $s(z)$. For $z,z'\in\Z_p$,
\[
 s(z)-s(z')=\frac{(p/d)(z-z')}{s(z)+s(z')},
\]
and therefore
\[
 zs(z)-z's(z')=(z-z')\left(s(z)+\frac{pz'/d}{s(z)+s(z')}\right).
\]
The expression in parentheses is a unit, congruent to $1$ modulo $p$.
The map thus preserves valuations of differences and is injective, hence
bijective, on each finite residue ring. Its square gives the claimed
polynomial identity.
\end{proof}

\begin{lemma}[ramified quadratic count]\label{lem:J}
For $d\in\Z_p^\times$, define
\[
 J_n(c)=\#\{(y,z)\bmod p^n:y^2-pdz^2\equiv c\pmod{p^n}\},\qquad J_0=1.
\]
For $n\ge1$, put $\nu=\vp(c)$. If $\nu\ge n$, then $J_n(c)=p^n$.
If $\nu<n$ and $c_0=c/p^\nu$, then
\[
 J_n(c)=
 \begin{cases}
  (1+\chi(c_0))p^n,&\nu\text{ even},\\
  (1+\chi(-dc_0))p^n,&\nu\text{ odd}.
 \end{cases}
\]
\end{lemma}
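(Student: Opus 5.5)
The plan is to prove Lemma~\ref{lem:J} by splitting the count according to whether the solution is a unit pair modulo $p$ or lies above the origin. Then we descend, using Lemma~\ref{lem:descent} with $(a,b,c)=(1,1,2)$, exactly as in the recursion for $C_j$ in the proof of Theorem~\ref{thm:node}. The basic valuation fact is Lemma~\ref{lem:normform}(c):
\[
\vp(y^2-pdz^2)=\min\bigl(2\vp(y),1+2\vp(z)\bigr).
\]
In particular, the valuation of the form is even exactly when it is attained by $y$, and odd exactly when it is attained by $z$.

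\emph{Case $\nu<n$, $\nu$ even.} Any solution has $\vp(y^2-pdz^2)=\nu$. Since $\nu$ is even, this forces $\vp(y)=\nu/2=t$ and $\vp(z)\ge t$. Substitute $y=p^tY$ and $z=p^tZ$ with $(a,b,c)=(t,t,2t)$. The equation becomes $Y^2-pdZ^2\equiv c_0\pmod{p^{n-2t}}$, where $n-2t\ge1$ and $Y$ is a unit. The extra factor from Lemma~\ref{lem:descent} is $p^{2c-a-b}=p^{2t}$. The reduced equation $Y^2\equiv c_0\pmod p$ has $p(1+\chi(c_0))$ solutions $(Y,Z)$, with $Z$ free. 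Each of these is a smooth point, because $\partial/\partial Y=2Y$ is a unit. Lemma~\ref{lem:hensel} then gives $p^{n-2t-1}$ lifts of each. The total is
\[
p^{2t}\cdot p^{n-2t}(1+\chi(c_0))=(1+\chi(c_0))p^n .
\]

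\emph{Case $\nu<n$, $\nu=2t+1$ odd.} Now $\vp(z)=t$ and $\vp(y)\ge t+1$. Substitute $y=p^{t+1}Y$ and $z=p^tZ$, with $(a,b,c)=(t+1,t,2t+1)$. The equation becomes $pY^2-dZ^2\equiv c_0\pmod{p^{n-2t-1}}$. Here the factor is $p^{2(2t+1)-(2t+1)}=p^{2t+1}$. Modulo $p$ the equation reads $-dZ^2\equiv c_0$. This has $1+\chi(-dc_0)$ values of $Z$, each paired with a free $Y$, and each is smooth since $\partial/\partial Z=-2dZ$ is a unit. Hensel then gives the total
\[
p^{2t+1}\cdot p^{n-2t-1}(1+\chi(-dc_0))=(1+\chi(-dc_0))p^n .
\]

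\emph{Case $\nu\ge n$, i.e.\ $c\equiv0\pmod{p^n}$.} This is the step I expect to need the most care. Lemma~\ref{lem:normform}(c) turns it into a direct count. We need the number of pairs with $2\vp(y)\ge n$ and $1+2\vp(z)\ge n$, i.e.\ $\vp(y)\ge\lceil n/2\rceil$ and $\vp(z)\ge\lceil (n-1)/2\rceil=\lfloor n/2\rfloor$. Working modulo $p^n$, these give $p^{\lfloor n/2\rfloor}$ choices for $y$ and $p^{\lceil n/2\rceil}$ choices for $z$, so $p^n$ pairs in total. One has to be careful that valuations are taken of representatives; the condition $p^n\mid y^2-pdz^2$ is well-defined modulo $p^n$, so this is harmless.

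In each case, the remaining obstacle is confirming that the descended equation involves the variables only modulo the reduced modulus. This holds because $c\ge a,b$, so Lemma~\ref{lem:descent} applies verbatim.
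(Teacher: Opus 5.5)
Your proof is correct and is essentially the paper's argument: Lemma~\ref{lem:normform}(c) handles $\nu\ge n$ directly and forces the substitutions $y=p^tY$, $z=p^tZ$ for even $\nu$ and $y=p^{t+1}Y$, $z=p^tZ$ for odd $\nu$, after which Hensel's lemma and the descent factors $p^{2t}$ and $p^{2t+1}$ give the counts. Your opening paragraph, which splits by residues modulo $p$ and uses $(a,b,c)=(1,1,2)$ as in the node recursion, describes a plan you never carry out and do not need, so you can delete it.
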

\begin{proof}
By Lemma \ref{lem:normform}(c),
$\vp(y^2-pdz^2)=\min(2\vp(y),\,1+2\vp(z))$.
For $\nu\ge n$, the congruence therefore says exactly
$p^{\lceil n/2\rceil}\mid y$ and $p^{\lfloor n/2\rfloor}\mid z$,
giving $p^n$ pairs.

If $\nu=2t<n$, write $y=p^t y_1$, $z=p^t z_1$ and divide by $p^{2t}$.
The reduced right-hand side is a unit; modulo $p$ there are
$p(1+\chi(c_0))$ smooth points. Lemma \ref{lem:hensel} to modulus $p^{n-2t}$,
and the $p^{2t}$ unused higher-digit choices (Lemma \ref{lem:descent}), give
$(1+\chi(c_0))p^n$.
If $\nu=2t+1<n$, then 
$y=p^{t+1}y_1$, $z=p^t z_1$. Division gives
$p y_1^2-dz_1^2=c_0$ modulo $p^{n-2t-1}$. It has
$p(1+\chi(-c_0/d))$ smooth points modulo $p$, and $2t+1$ unused higher
digits. Since $\chi(-c_0/d)=\chi(-dc_0)$, the stated answer follows.
\end{proof}

\begin{proof}[Proof of Theorem~\ref{thm:K}]
If $D\not\equiv0\pmod p$, every root of $g(X)=X^3+aX+b$ modulo $p$ is
simple. Each root, together with arbitrary $Y\bmod p$, gives $p$ smooth
points on $g(X)-pY^2=0$. Lemma~\ref{lem:hensel} proves \eqref{eq:Ksimple}.

Suppose $p\mid D$. By Lemma \ref{lem:cubic}, there is one double root
$r\ne0$ and one simple root $-2r$, both in $\Fp$. By Corollary
\ref{cor:hensel1}(b), lift $r$
uniquely to a root $\rho\in\Z_p^\times$ of $3X^2+a=0$. Set
\[
 d=3\rho,\qquad c=g(\rho)=b-2\rho^3.
\]
Then, by Lemma \ref{lem:shift},
\begin{equation}\label{eq:critical}
 g(\rho+u)=c+du^2+u^3,\qquad
 D=27c(4\rho^3+c).
\end{equation}
The second factor and $27$ are units; hence $\vp(c)=\vp(D)=m$.

The simple root contributes $p^\ell$. Above the double root write
$X=\rho+pz$. The equation becomes
\[
 pY^2=c+p^2dz^2+p^3z^3.
\]
After division by $p$, Lemma~\ref{lem:sqrtmap} removes the cubic term.
There is one unused higher digit of $Y$, so this branch contributes
\[
 pJ_{\ell-1}(c/p).
\]
This also holds at $\ell=1$ using $J_0=1$.
If $\ell\le m$, Lemma~\ref{lem:J} gives $p^\ell$, so $K_\ell=2p^\ell$.

Suppose $\ell>m$, and put $c_0=c/p^m$. Its quadratic character can be
read off without computing $\rho$. Equation~\eqref{eq:critical} implies
\[
 \chi(c_0)=\chi(d)\chi(D_0),
\]
because $27\cdot4\rho^3/(3\rho)=36\rho^2$ is a nonzero square modulo $p$.
Moreover $a\equiv-3\rho^2$, $b\equiv2\rho^3$ gives, as in Lemma \ref{lem:cubic}(iv),
\[
 \chi(d)=\chi(-2ab).
\]
The valuation of $c/p$ is $m-1$. If $m$ is odd,
Lemma~\ref{lem:J} uses $\chi(c_0)=\chi(-2abD_0)$; if $m$ is even,
it uses $\chi(-dc_0)=\chi(-D_0)$. These are exactly
\eqref{eq:eps}. Adding the simple-root contribution proves
\eqref{eq:Kdouble}.
\end{proof}

\subsection{Eliminating all repeated scaling}\label{sec:compression}
\begin{theorem}[nonrecursive compression]\label{thm:compression}
For $k\ge1$ and arbitrary $A,B\in\Z$, put
\[
 t=\min\left(\left\lfloor\frac{\vp(A)}4\right\rfloor,
              \left\lfloor\frac{\vp(B)}6\right\rfloor,
              \left\lfloor\frac{k}6\right\rfloor\right),
 \quad \ell=k-6t,\quad A_t=A/p^{4t},\quad B_t=B/p^{6t}.
\]
The minimum uses the convention $\lfloor\infty\rfloor=\infty$; in particular
$t$ is finite. Then
\begin{equation}\label{eq:compression}
 {N_k(A,B)=p^{k-1}(p^t-1)+p^{7t}N_\ell(A_t,B_t).}
\end{equation}
The count on the right is evaluated without another scaling step:
if $\ell=0$ it is $1$; if $\ell<6$ it is a base value or another terminal
case; otherwise $p^4\nmid A_t$ or $p^6\nmid B_t$, so row 8 cannot apply.
The remaining cases are good reduction, a node, or rows 1--7 of
Theorem~\ref{thm:cusp}.
\end{theorem}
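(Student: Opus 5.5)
We prove \eqref{eq:compression} by induction on $t$, using row 8 of Theorem~\ref{thm:cusp} together with the cusp decomposition \eqref{eq:split1}. First, $t$ is finite because $\lfloor k/6\rfloor$ is finite. For $t=0$ the formula reads $N_k(A,B)=N_k(A,B)$, since $p^0-1=0$, $\ell=k$ and $(A_0,B_0)=(A,B)$.

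For the inductive step, suppose $t\ge1$. Then $\vp(A)\ge4$, $\vp(B)\ge6$ and $k\ge6$. In particular $p\mid A$ and $p\mid B$, so we are in the cusp case and \eqref{eq:split1} applies. Row 8 of Theorem~\ref{thm:cusp} holds (with $\alpha\ge4$, $\beta\ge6$) and gives $M_k=p^7N_{k-6}(A/p^4,B/p^6)$, valid also for $k=6$ by $N_0=1$. Hence
\[
N_k(A,B)=(p-1)p^{k-1}+p^7N_{k-6}(A_1,B_1).
\]
Now compare the parameters for $(A_1,B_1,k-6)$ with those for $(A,B,k)$. We have $\vp(A_1)=\vp(A)-4$ and $\vp(B_1)=\vp(B)-6$, so each of the three floors in the definition of $t$ drops by exactly $1$. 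Therefore the new depth is $t'=t-1$, the new remainder is $\ell'=(k-6)-6(t-1)=\ell$, and $A_1/p^{4(t-1)}=A_t$, $B_1/p^{6(t-1)}=B_t$. The case $\vp=\infty$ needs no separate treatment, because $\infty-4=\infty$ and the minimum is still controlled by $\lfloor k/6\rfloor$. The induction hypothesis gives
\[
N_{k-6}(A_1,B_1)=p^{k-7}(p^{t-1}-1)+p^{7(t-1)}N_\ell(A_t,B_t).
\]
Substituting,
\[
N_k(A,B)=p^{k-1}(p-1)+p^{k}(p^{t-1}-1)+p^{7t}N_\ell(A_t,B_t).
\]
The first two terms sum to $p^{k-1}\bigl(p-1+p^{t}-p\bigr)=p^{k-1}(p^t-1)$, which is the claimed formula.

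The terminal claims follow from the definition of $t$ as a minimum, whichever entry attains it:
\begin{itemize}
\item[--] If $\lfloor k/6\rfloor$ attains the minimum, then $\ell=k-6t<6$. For $\ell=0$ we have $N_0=1$. For $1\le\ell\le5$, any applicable cusp row has $h\le6$, and row 8 needs $k\ge6$, so $N_\ell$ is given either by the initial values $M_k=p^k$ ($2\le k<h$), $M_1=1$, or by rows 1--7.
\item[--] If $\lfloor\vp(A)/4\rfloor$ or $\lfloor\vp(B)/6\rfloor$ attains the minimum, then $\vp(A_t)<4$ or $\vp(B_t)<6$, so row 8's hypothesis fails. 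The pair $(A_t,B_t)$ then falls under good reduction, a node, or rows 1--7. Row 4 may still invoke $K$, which is closed by Theorem~\ref{thm:K}, not by recursion.
\end{itemize}

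The main obstacle is bookkeeping rather than any new idea. The points to get right are that all three floors drop by exactly one, the infinite-valuation convention, and the boundary case $k=6t$ where $\ell=0$. Once these are settled, the geometric sum of the smooth contributions $(p-1)p^{k-1-6j}\cdot p^{7j}=(p-1)p^{k-1+j}$ over $j<t$ telescopes to $p^{k-1}(p^t-1)$, which is exactly what the induction computes.
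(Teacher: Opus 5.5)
Your proof is correct and matches the paper's: the paper also applies the identity $N_k(A,B)=(p-1)p^{k-1}+p^7N_{k-6}(A/p^4,B/p^6)$, obtained from \eqref{eq:split1} and row 8, exactly $t$ times. It sums the resulting geometric series to $p^{k-1}(p^t-1)$ and reads the stopping condition off the definition of $t$ as a minimum; your induction on $t$, using that all three floors drop by exactly one, is a formal version of the same unrolling.
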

\begin{proof}
Whenever a scaling step is permitted, \eqref{eq:split1} and row 8 give
\[
 N_k(A,B)=(p-1)p^{k-1}+p^7N_{k-6}(A/p^4,B/p^6).
\]
After $t$ such steps the accumulated smooth contribution is
\[
 \sum_{i=0}^{t-1}p^{7i}(p-1)p^{k-6i-1}
 =(p-1)p^{k-1}\sum_{i=0}^{t-1}p^i
 =p^{k-1}(p^t-1).
\]
The remaining contribution is $p^{7t}N_\ell(A_t,B_t)$.
The definition of $t$ gives the asserted stopping condition.
The same calculation is valid for $t=0$, when the sum is empty.
\end{proof}

\begin{corollary}[the identically cuspidal equation]
For $A=B=0$ and $k\ge1$, writing $u=\lfloor(k-1)/6\rfloor+1$, one has
\[
 N_k(0,0)=p^{k-1}(p^u-1)+p^{k-\lceil k/3\rceil+\lfloor k/2\rfloor}.
\]
\end{corollary}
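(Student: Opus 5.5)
The plan is to specialize Theorem~\ref{thm:compression} to $A=B=0$ and then evaluate the leftover count $N_\ell(0,0)$ from the initial values in Theorem~\ref{thm:cusp}.

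\emph{Reduction step.} Since $\vp(0)=\infty$, we get $t=\lfloor k/6\rfloor$ and $\ell=k-6t\in\{0,\dots,5\}$, with $A_t=B_t=0$. Equation \eqref{eq:compression} then gives
\[
N_k(0,0)=p^{k-1}(p^t-1)+p^{7t}N_\ell(0,0).
\]
The condition $\ell<6$ means no further scaling step occurs.

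\emph{Base values.} For $\ell=0$ we have $N_0=1$. For $1\le\ell\le5$, \eqref{eq:split1} gives $N_\ell(0,0)=(p-1)p^{\ell-1}+M_\ell$. The equation $A=B=0$ falls in row~8 of Theorem~\ref{thm:cusp}, where $h=6$, so $M_1=1$ and $M_\ell=p^\ell$ for $2\le\ell\le5$. In every case $1\le\ell\le5$ this can be written as
\[
N_\ell(0,0)=(p-1)p^{\ell-1}+p^{\ell-\lceil\ell/3\rceil+\lfloor\ell/2\rfloor},
\]
because the exponent $\ell-\lceil\ell/3\rceil+\lfloor\ell/2\rfloor$ equals $0,2,3,4,5$ for $\ell=1,\dots,5$. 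These values match $M_1=1$ and $M_\ell=p^\ell$, and this table check is the only place needing care.

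\emph{Matching with the stated formula.} I split into two cases according to whether $6\mid k$.

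If $\ell=0$, then $k=6t$ and $u=\lfloor(6t-1)/6\rfloor+1=t$. The exponent is $k-\lceil k/3\rceil+\lfloor k/2\rfloor=6t-2t+3t=7t$, so the claim coincides with the reduction identity.

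If $1\le\ell\le5$, then $u=t+1$. The exponent $E=k-\lceil k/3\rceil+\lfloor k/2\rfloor$ splits additively as $7t+\bigl(\ell-\lceil\ell/3\rceil+\lfloor\ell/2\rfloor\bigr)$, since $6t$ passes through both the ceiling and the floor. Substituting the base value gives
\[
p^{7t}N_\ell=(p-1)p^{k+t-1}+p^{E}.
\]
Adding $p^{k-1}(p^t-1)$ yields
\[
p^{k-1}(p^{t+1}-1)+p^E=p^{k-1}(p^u-1)+p^E,
\]
which is the claimed formula.

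The main obstacle is purely bookkeeping: getting $u$ right in the divisible case $6\mid k$, and verifying the small table of exponents for $\ell\le5$, in particular the exceptional value $M_1=1$ at $\ell=1$.
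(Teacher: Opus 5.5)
Your proof is correct, but it takes a different route from the paper's.

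\textbf{Your route.} You specialize Theorem~\ref{thm:compression} to $A=B=0$, which gives $t=\lfloor k/6\rfloor$ and $\ell\le5$. You then insert the initial values of row~8 of Theorem~\ref{thm:cusp}, namely $M_1=1$ and $M_\ell=p^\ell$ for $2\le\ell\le5$. The rest is exponent bookkeeping, and you handle it correctly: $u=t$ when $6\mid k$, $u=t+1$ otherwise, and the exponent splits additively as $7t$ plus the small-$\ell$ part.

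\textbf{The paper's route.} The paper does not use the descent at all. It counts directly: $N_k(0,0)=\sum_x\#\{y:y^2\equiv x^3\}$, split by $\vp(x)$ and evaluated with Lemma~\ref{lem:sqrt}.
\begin{itemize}
\item The $x$ with $\vp(x)=2j$ and $6j<k$ contribute $(p-1)p^{k+j-1}$ in total.
\item Odd valuations with $3\vp(x)<k$ contribute nothing.
\item The $x$ with $p^{\lceil k/3\rceil}\mid x$ contribute $p^{k-\lceil k/3\rceil+\lfloor k/2\rfloor}$.
\end{itemize}
Summing the first family over $0\le j\le u-1$ gives the geometric progression.

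\textbf{Comparison.} Your version is shorter once the machinery is available. It also explains the progression as the accumulated smooth contributions of the scaling steps. The $j$-th step contributes $p^{7j}(p-1)p^{k-6j-1}=(p-1)p^{k+j-1}$, which is exactly the paper's $\vp(x)=2j$ term.

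However, your derivation is not independent of the descent. Any error in row~8 or in the compression bookkeeping would pass straight into your formula. The paper's valuation sum relies only on Lemma~\ref{lem:sqrt}. That is why it can serve as an independent check of Theorem~\ref{thm:compression} across arbitrarily many six-step cycles, and the paper uses it that way in the $480$ numerical comparisons for $A=B=0$.
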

\begin{proof}
For $\vp(x)=2j$ with $6j<k$, half the leading units of $x$ are quadratic
residues. Their contribution to the square-root count of $x^3$ is
$(p-1)p^{k+j-1}$. Odd $\vp(x)$ with $3\vp(x)<k$ contributes nothing.
Finally, $p^{\lceil k/3\rceil}\mid x$ makes $x^3\equiv0\pmod{p^k}$ and
contributes $p^{k-\lceil k/3\rceil+\lfloor k/2\rfloor}$.
Summing the first geometric progression proves the formula. It also
independently checks arbitrarily many six-step cycles.
\end{proof}

\subsection{Arithmetic terms with variable $k$}\label{sec:terms}

Theorem~\ref{thm:compression} removes the repeated scaling from the
count. To express the resulting finite case distinction by an
arithmetic term, we also need a term for the number of roots of
$X^3+aX+b$ modulo $p$. We obtain it by the coefficient extraction used
in Lemma~\ref{lem:extract}.

\begin{proposition}[a term for the residual cubic root count]\label{prop:R}
Let $\bar a=a\bmod p$, $\bar b=b\bmod p$, and put
\[
 Q=2^{p^2},\qquad H=(Q^3+\bar aQ+\bar b)^{p-1}.
\]
Then, for every prime $p\ge5$,
\begin{equation}\label{eq:Rterm}
 R=\left(1+
 \left(\left\lfloor H/Q^{p-1}\right\rfloor\bmod Q\right)+
 \left(\left\lfloor H/Q^{2(p-1)}\right\rfloor\bmod Q\right)
 \right)\bmod p.
\end{equation}
\end{proposition}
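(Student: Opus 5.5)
We count roots via Fermat: for $x\in\Fp$, $1-g(x)^{p-1}$ is the indicator of $g(x)=0$ (Lemma \ref{lem:eulerfermat}(i)), where $g(X)=X^3+\bar aX+\bar b$. Summing over $x\in\Fp$ gives, modulo $p$,
\[
R\equiv \sum_{x\in\Fp}\bigl(1-g(x)^{p-1}\bigr)\equiv p-\sum_{k}h_k\sum_{x\in\Fp}x^k\equiv -\sum_k h_k S_k \pmod p,
\]
where $g^{p-1}=\sum_{k=0}^{3(p-1)}h_kx^k$ with nonnegative integer coefficients $h_k$. By Lemma \ref{lem:sumofpowers}, $S_k\equiv-1$ exactly when $k>0$ and $(p-1)\mid k$, and $S_k\equiv0$ otherwise (including $S_0=p\equiv0$). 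In the range $0\le k\le 3(p-1)$ the relevant exponents are $p-1$, $2(p-1)$ and $3(p-1)$, so $R\equiv h_{p-1}+h_{2(p-1)}+h_{3(p-1)}\pmod p$. The leading coefficient is $h_{3(p-1)}=1$, which explains the summand $1$ in \eqref{eq:Rterm}. Since $0\le R\le 3<p$ for $p\ge5$, $R$ equals its least residue, and the formula follows once the two floor-mod expressions are shown to equal $h_{p-1}$ and $h_{2(p-1)}$.

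That extraction is the analogue of Lemma \ref{lem:extract} with exponent $p-1$ instead of $e$. The coefficients $h_k$ are nonnegative, since $\bar a,\bar b\ge0$. Their sum is
\[
(1+\bar a+\bar b)^{p-1}\le(2p-1)^{p-1}<2^{p^2}=Q
\]
by Lemma \ref{lem:inequality} (valid since $p\ge5\ge3$). Hence each $h_k<Q$. Evaluation at $Q$ then gives $H=\sum_k h_kQ^k$ as a genuine base-$Q$ expansion, and $\lfloor H/Q^{j}\rfloor\bmod Q=h_j$ for $j=p-1,\,2(p-1)$.

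The only delicate point is the bookkeeping of the power sums. One must check that the index $k=0$ contributes nothing: the constant $p$ from $\sum 1$ cancels modulo $p$, and $S_0\equiv0$ as well. One must also check that $3(p-1)$ is the top degree, so its coefficient is exactly $1$ and not something requiring extraction. Beyond that, the argument is a direct transcription of Lemmas \ref{lem:cong} and \ref{lem:extract}. The outer $\bmod\ p$ also absorbs the sign, because $R\equiv -\sum_k h_kS_k=h_{p-1}+h_{2(p-1)}+1$.
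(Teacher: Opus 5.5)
Your proposal is correct and follows essentially the same route as the paper's proof. Like the paper, you sum the Fermat indicator $1-g(x)^{p-1}$ over $\Fp$, use the power sums to get $R\equiv c_{p-1}+c_{2(p-1)}+c_{3(p-1)}\pmod p$ with monic top coefficient $1$ and $0\le R\le 3<p$, and extract the two middle coefficients as base-$Q$ digits via Lemma~\ref{lem:inequality}.
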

\begin{proof}
Write $(X^3+\bar aX+\bar b)^{p-1}=\sum_j c_jX^j$.
The coefficients are nonnegative and
\[
 \sum_j c_j=(1+\bar a+\bar b)^{p-1}\le(2p-1)^{p-1}<2^{p^2}=Q.
\]
The last inequality is Lemma \ref{lem:inequality}. Thus the two digit extractions in \eqref{eq:Rterm}
return $c_{p-1}$ and $c_{2(p-1)}$ exactly.

For each $x\in\Fp$, $1-g(x)^{p-1}$ is the indicator that $g(x)=0$.
Summing and using the finite-field power sums gives
\[
 R\equiv c_{p-1}+c_{2(p-1)}+c_{3(p-1)}\pmod p.
\]
The highest coefficient is $1$, and $0\le R\le3<p$, proving equality
with the least nonnegative residue in \eqref{eq:Rterm}.
\end{proof}

The valuation $\vp$ with $p$ a variable is the arithmetic term of
Proposition \ref{prop:vp}, and the capped valuation is \eqref{eq:cap}.

For natural numbers $x,y$, write
\[
 E(x,y)=1\tsub\bigl((x\tsub y)+(y\tsub x)\bigr).
\]
This is the equality indicator. For a unit $u\bmod p$, put
\[
 e(u)=((u\bmod p)^{(p-1)/2})\bmod p,\qquad s(u)=E(e(u),1).
\]
Then $1+\chi(u)=2s(u)$ and $2+\chi(u)=1+2s(u)$.
A negative residue $-u\bmod p$ is represented, with natural intermediates,
by $(p\tsub(u\bmod p))\bmod p$. Thus the displayed Legendre symbols do not
require a negative intermediate arithmetic value.

\begin{corollary}[fixed finite expression]\label{cor:term}
For $A,B\ge0$, prime $p\ge17$ and variable $k\ge1$, the cusp count is a
fixed arithmetic term, using the good-reduction and nodal formulas
for the residual count in Theorem~\ref{thm:compression}.
\end{corollary}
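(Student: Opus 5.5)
The plan is to assemble the term from Theorem~\ref{thm:compression}, which gives
\[
N_k(A,B)=p^{k-1}(p^t-1)+p^{7t}N_\ell(A_t,B_t),
\]
and then to express each ingredient as a fixed arithmetic term in $A,B,p,k$. The first step is to get $t$, $\ell$, $A_t$, $B_t$. The valuations are taken through the capped form \eqref{eq:cap}: compute $\alpha'=\min(\vp(A),6k)=\vp((A\bmod p^{6k})+p^{6k})$, and likewise $\beta'$ for $B$. No information is lost, because $t\le\lfloor k/6\rfloor$ only needs valuations up to $6k$, and this also covers $A=0$ or $B=0$. Then
\[
t=\min\bigl(\lfloor\alpha'/4\rfloor,\lfloor\beta'/6\rfloor,\lfloor k/6\rfloor\bigr),
\]
with $\min$ written via truncated subtraction. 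We set $\ell=k\tsub 6t$, $A_t=\lfloor A/p^{4t}\rfloor$ and $B_t=\lfloor B/p^{6t}\rfloor$. The divisions are exact, so $A_t,B_t\ge0$ are natural numbers.

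The second step evaluates the residual count $N_\ell(A_t,B_t)$ for $\ell\ge1$; the case $\ell=0$ is selected by the indicator $E(\ell,0)$ and gives $1$. The count is a finite case distinction, with each branch multiplied by a product of $0/1$ indicators.
\begin{itemize}
\item[--] \emph{Good reduction.} The indicator is $1\tsub(1\tsub(\Delta_t\bmod p))$. By Corollary~\ref{cor:good} the count is $p^{\ell-1}$ times the term of Theorem~\ref{thm:ztopz}, which needs $p\ge17$; this is why the corollary assumes it.
\item[--] \emph{Node.} The indicator says that $p$ divides $\Delta_t$ and $p$ does not divide $A_t$. The count is $p^{\ell-1}(N_1-1)+M_\ell$, with $N_1=p-\eta$ and $M_\ell$ from Theorem~\ref{thm:node}. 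Here $\min(m,\ell)$ is the capped valuation of $\Delta_t$, $\eta$ comes from $s(2A_tB_t)$ via the negative-residue trick, and the subcases $\ell\le m$ and parity of $m$ use the inequality and parity indicators. Since $(k-1)p-(k-2)=(k-1)(p-1)+1$, all quantities stay natural.
\item[--] \emph{Cusp.} The count is $(p-1)p^{\ell-1}+M_\ell$, where $M_\ell$ is given by rows 1--7 of Theorem~\ref{thm:cusp}. Row~8 is excluded by Theorem~\ref{thm:compression}. The row conditions are indicators in the capped valuations of $A_t$ and $B_t$, capped at $7$ say, and the initial values $M_\ell=p^\ell$ for $\ell<h$ are handled by comparing $\ell$ with $h$. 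Rows 3 and 5 use $2s(\cdot)$. Row~4 needs $K_{\ell-3}(a,b)$ with $a=A_t/p^2$ and $b=B_t/p^3$; for that we use Theorem~\ref{thm:K} with $R$ from Proposition~\ref{prop:R}, the capped valuation of $D=4a^3+27b^2$ at level $\ell$, and $\varepsilon$ written as $2s(\cdot)-1$. Since $2+\varepsilon=1+2s$, no signed quantity appears.
\end{itemize}

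The last step combines the branches. The sum $\sum(\text{indicator})\cdot(\text{branch value})$ is again a term, because every branch value is well defined as a natural number even when its indicator is $0$. It suffices to guard the quantities that could misbehave: divisors such as $p^{\ell-1}$ with $\ell=0$ become $p^{\ell\tsub1}$, and the truncated subtractions are exact only on the active branch, which is harmless because the inactive branches are multiplied by $0$. The number of operations does not depend on $k$, since $k$ enters only through exponents.

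The main obstacle is the bookkeeping of the cusp branch. Every valuation used must be capped at a bound that is itself a term in $k$, so that $\vp(0)$ never occurs. This includes the discriminant valuation $m$ in Theorem~\ref{thm:K}, where $D=0$ is allowed and only $\min(m,\ell)$ matters. We also have to check that Legendre arguments such as $-D_0(2ab)^{m\bmod 2}$ are units when used. They are: $D_0$ is a unit when $m<\ell$ is finite, and $p\nmid ab$ by Theorem~\ref{thm:K}. So these arguments can be formed as residues mod $p$, computing $D_0$ as $\lfloor D/p^{m}\rfloor$ with the capped $m$.
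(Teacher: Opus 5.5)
Your proposal is correct and follows the paper's own proof. You cap the valuations for $t$ through \eqref{eq:cap} (at $6k$, where the paper uses $k$; both suffice), apply Theorem~\ref{thm:compression}, and write the residual count as an indicator-weighted sum of the good, nodal and cuspidal branches built from Theorems~\ref{thm:ztopz}, \ref{thm:node}, \ref{thm:cusp}, \ref{thm:K} and Proposition~\ref{prop:R}, using $\min(m,\ell)$ in place of $m$.

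Two bookkeeping slips need fixing. First, row~8 must stay in the table for $\ell<6$; only its recursive column is excluded. For example, $A=B=0$, $k=3$ gives $\ell=3$, matches no row 1--7, and needs $M_3=p^3$. Second, the initial values are $M_1=1$, and $M_\ell=p^\ell$ holds only for $2\le\ell<h$.
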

\begin{proof}
Use \eqref{eq:cap} with $L=k$ to compute the two valuations used in $t$;
capping does not change their minimum with $\lfloor k/6\rfloor$.
Theorem~\ref{thm:compression} makes the residual coefficients by exact
divisions and reduces the problem to a fixed case table. For a residual
cusp, use Theorems~\ref{thm:cusp} and \ref{thm:K}. The only cubic root
count is the arithmetic term \eqref{eq:Rterm}.
In \eqref{eq:Kdouble}, only $\min(m,\ell)$ is needed. The unit $D/p^m$
is used only if $m<\ell$, so this cap gives the exact required valuation
and the unit's residue. All valuations therefore use positive inputs
through \eqref{eq:cap}; there is no term $\vp(0)$.

Every condition in the finite table is a divisibility, equality,
inequality or parity test, expressed by the indicators above.
Multiply each branch expression by the indicator of its case and add
the resulting finitely many terms. On inputs outside a branch, any
negative exponent is replaced by zero using truncated subtraction.
The denominators remain positive: they are powers of $p$ or the
positive quantities in Proposition~\ref{prop:vp}. The good-reduction
and nodal branches use the terms already established. Thus the number
of arithmetic operations is independent of $k$.

\end{proof}

For $p=5,7,11,13$, the same construction applies if the initial count
$N_1$ is supplied by Theorem~\ref{thm:composite} or by four fixed tables
of point counts indexed by the coefficients modulo $p$.

\begin{remark}

If the factorization of $n$ is known and $\gcd(n,6)=1$, the Chinese
remainder theorem gives the count over $\Z/n\Z$ as a product of the
counts over its prime-power factors. Each factor is supplied by the
local formulas above. The all-moduli term of
Theorem~\ref{thm:composite} can be evaluated without a factorization.

\end{remark}

\section{Relation to Tate's algorithm}\label{sec:tate}
For a generically nonsingular minimal short Weierstra\ss{} equation at
$p\ge5$, the exits in the cusp table correspond to
\[
 II,\ III,\ IV,\ I_0^*\text{ or }I_m^*,\ IV^*,\ III^*,\ II^*.
\]
In row 4, $p\nmid D$ gives $I_0^*$, while $m=\vp(D)\ge1$ gives $I_m^*$;
the original elliptic discriminant has valuation $m+6$. Row 8 is the
nonminimal scaling. These correspondences agree with the stepwise
analysis in Cremona--Sadek \cite[Sections 2 and 5.1]{CS}.

The count modulo $p^k$ depends on the given integral equation.
A nonminimal equation can have cuspidal reduction even when a minimal
equation for the same elliptic curve has good or multiplicative
reduction. The scaling contribution in
Theorem~\ref{thm:compression} must therefore be retained.
When $\Delta=0$, the counting formulas still apply, but the generic
cubic is singular and has no elliptic Kodaira symbol.

\section{Numerical verification}\label{sec:numerics}

The point-counting formulas were checked against independent computations
using exact integer arithmetic. All reported comparisons agreed.

\paragraph{Arbitrary moduli.}
For each $n=1,\ldots,48$, six coefficient pairs were tested in
Theorem~\ref{thm:composite}, including coefficients at least $n$ and
cubics singular modulo $n$. In each of the $288$ comparisons, the point
count was obtained by direct enumeration, and the packed integer $M$
was constructed both from the individual values $\delta(E)$ and from
the $15$ products $S(i,j,k)$. The two constructions of $M$ agreed as
integers, and the value of the term agreed with the direct count.
At $n=60$, the packed integer had approximately $1.9\cdot10^9$ binary
digits and its evaluation took $35$ seconds. At $n=100$, it had
approximately $2.26\cdot10^{10}$ binary digits; two evaluations each
took about $8$ minutes of wall time on one processor core. These
evaluations also agreed with direct counts.

\paragraph{Prime fields.}
The complete term of Theorem~\ref{thm:ztopz}, with digit width $p^2$,
was evaluated for all primes $17\le p\le400$ and six coefficient pairs
per prime. The $432$ values were compared with direct point counts.
The congruence and recentering step were checked separately for all
primes $17\le p\le10^6$, with three coefficient pairs per prime,
giving $235{,}476$ comparisons. This larger computation used the
character sum to obtain the residue $r$; it did not evaluate the
packed integer. The prime-field results were reproduced by a second
implementation. Complete evaluations at $p=1009$ and $p=2003$ took
$5.3$ and $72$ seconds of wall time, respectively, on one processor
core.

\paragraph{Extension fields.}
Theorem~\ref{thm:ext} was compared with direct counts in fields
$\mathbb F_{p^2}$ and $\mathbb F_{p^3}$ constructed for
$p\in\{17,19,23\}$. Five coefficient pairs for each prime, including a
node and a cusp, gave $30$ comparisons. The complete term of
Theorem~\ref{thm:term} was also evaluated for
$p\in\{17,19,23,31,101\}$ and $1\le k\le12$. Four coefficient pairs
were used for each prime, with the singular pair at $p=31$ excluded.
The resulting $228$ values were compared with the recurrence in
Theorem~\ref{thm:ext}.

\paragraph{Good reduction and nodes modulo $p^k$.}
For these checks, the independent count was obtained by tabulating the
number of square roots of each residue modulo $p^k$, then summing the
entry at $x^3+Ax+B$ over $x$. This takes $O(p^k)$ operations.
\begin{itemize}
\item For good reduction, the primes were $5,7,11,13,17,19,23$.
  The seven coefficient pairs for each prime were
  $(1,1)$, $(2,3)$, $(0,1)$, $(1,0)$, $(5,7)$, $(3,11)$ and
  $(p+2,p^2+5)$. Retaining $p\nmid\Delta$, $2\le k\le4$ and
  $p^k\le200{,}000$ gave $128$ comparisons with
  Theorem~\ref{thm:good}.
\item For nodes, the systematic families were
  $A=-3\rho^2$, $B=2\rho^3+p^m$.
  The primes were $5,7,11,13,17,19,23,29$, with
  $1\le\rho\le\min(8,p-1)$ and $1\le m\le7$.
  The families $B=2\rho^3$ were also included. Restricting to
  $1\le k\le7$ and $p^k\le300{,}000$ gave $2048$ comparisons
  with Theorem~\ref{thm:node}.
\item A further $592$ comparisons used random coefficients with good
  or nodal reduction, for primes $5\le p\le23$, $1\le k\le7$ and
  $p^k\le300{,}000$. The nodal invariants were computed from the
  coefficients as
  $v_p((\Delta\bmod p^k)+p^k)$ and $\legendre{-2AB}$.
\item The valuation term in Proposition~\ref{prop:vp} was checked
  for $10$ primes and all inputs $1\le n\le199$.
\end{itemize}

\paragraph{Cusps modulo $p^k$.}
Table~\ref{tab:cusp-checks} records $7999$ comparisons of the cusp
formulas and their auxiliary terms. The first five families, totaling
$5233$ comparisons, used the independent square-root count described
above. The largest modulus was $5^9=1{,}953{,}125$.
The $480$ checks for $A=B=0$ compared the compressed formula with the
separate valuation sum in Section~\ref{sec:compression}, allowing
exponents up to $k=120$ without enumerating residues modulo $p^k$.

\begin{table}[htbp]
\centering
\small
\renewcommand{\arraystretch}{1.25}
\begin{tabular}{@{}p{11.0cm}r@{}}
\toprule
Check family & Comparisons\\
\midrule
Cusp coefficients $A=pa,B=pb$, all $0\le a,b<p$,
 $p=5,7,11$, $1\le k\le5$ &975\\
Cusp valuation grid: $1\le\alpha\le7$, $1\le\beta\le9$,
 three leading-unit pairs; $p=5,7,11$, respectively $k\le7,6,5$ &3402\\
Cusp repeated-root families, both valuation parities and exact zero discriminant &588\\
Nonminimal families, including scaled good, nodal and cuspidal equations,
 $p=5$, $6\le k\le9$ &28\\
Random cusp coefficients for $p=5,7,11,17$, seed $20260910$ &240\\
Auxiliary ramified cubic: coefficient grids &768\\
Auxiliary ramified cubic: families with prescribed discriminant valuations &504\\
Cubic-root arithmetic term, all coefficient pairs modulo $p$ for
 $p=5,7,11,13,17,19$ &1014\\
Pure cusp $A=B=0$, independent valuation sum,
 $p=5,7,11,17$, $1\le k\le120$ &480\\
\midrule
Total &7999\\
\bottomrule
\end{tabular}

\caption{Exact integer comparisons for the cuspidal formulas over
$\Z/p^k\Z$ and the auxiliary cubic counts. Here $\alpha=v_p(A)$ and
$\beta=v_p(B)$. Each comparison tests a formula against an independent
count or valuation sum; all $7999$ comparisons agreed.}
\label{tab:cusp-checks}
\end{table} 

\section*{Acknowledgments}

This paper was produced in an AI-assisted workflow. AI assistants helped
explore candidate constructions, draft proofs, and write software for numerical experiments.

\end{document}